\documentclass[12pt]{amsart}
\usepackage{geometry}                
\usepackage{graphicx}
\usepackage{amssymb}
\usepackage{epstopdf}
\usepackage{amsmath}
\usepackage{color}
\usepackage{hyperref}
\usepackage{mathtools}
\usepackage{pdfsync}
 \usepackage{tikz}

\usepackage[all]{xy}
\xyoption{matrix}
\xyoption{arrow}

\usetikzlibrary{arrows,decorations.pathmorphing,backgrounds,positioning,fit,petri}

 \tikzset{help lines/.style={step=#1cm,very thin, color=gray},
help lines/.default=.5} 
\tikzset{thick grid/.style={step=#1cm,thick, color=gray},
thick grid/.default=1} 

\newtheorem{thm}{Theorem}[section]
\newtheorem{lem}[thm]{Lemma}
\newtheorem{cor}[thm]{Corollary}
\newtheorem{prop}[thm]{Proposition}

\theoremstyle{definition}
\newtheorem{defn}[thm]{Definition}
\newtheorem{eg}[thm]{Example}

\theoremstyle{remark}
\newtheorem{rem}[thm]{Remark}

\numberwithin{equation}{section}

 \newcommand{\symm}[2]{{\dfrac{\color{blue}#1}{\color{red}#2}}}

\newcommand{\noans}[1]{}

\newcommand{\into}{\hookrightarrow}
 \newcommand{\onto}{\twoheadrightarrow}
 \newcommand{\cof}{\rightarrowtail}

 \newcommand{\ot}{\leftarrow}

\DeclareMathOperator{\im}{im}
\DeclareMathOperator{\coker}{coker}
\DeclareMathOperator{\Hom}{Hom}%
\DeclareMathOperator{\End}{End}%
\DeclareMathOperator{\undim}{\underline{dim}}
 
\newcommand{\field}[1]{\mathbb{#1}}
\newcommand{\ZZ}{\ensuremath{{\field{Z}}}}
\newcommand{\CC}{\ensuremath{{\field{C}}}}
\newcommand{\RR}{\ensuremath{{\field{R}}}}
\newcommand{\QQ}{\ensuremath{{\field{Q}}}}

\newcommand{\commentout}[1]{}

\newcommand{\cA}{\ensuremath{{\mathcal{A}}}}

\newcommand{\cB}{\ensuremath{{\mathcal{B}}}}

\newcommand{\cG}{\ensuremath{{\mathcal{G}}}}
\newcommand{\cH}{\ensuremath{{\mathcal{H}}}}

\newcommand{\cP}{\ensuremath{{\mathcal{P}}}}
\newcommand{\cQ}{\ensuremath{{\mathcal{Q}}}}

\newcommand{\cS}{\ensuremath{{\mathcal{S}}}}

\newcommand{\cU}{\ensuremath{{\mathcal{U}}}}
\newcommand{\cV}{\ensuremath{{\mathcal{V}}}}
\newcommand{\cW}{\ensuremath{{\mathcal{W}}}}

\newcommand{\cX}{\ensuremath{{\mathcal{X}}}}
\newcommand{\cY}{\ensuremath{{\mathcal{Y}}}}

\definecolor{ki}{RGB}{150,50,0}

\definecolor{rm}{RGB}{0,0,200}

\title{pseudo-torsion classes}
\author{Kiyoshi Igusa}
\address{Department of Mathematics, Brandeis University, Waltham, MA 02454}\email{igusa@brandeis.edu}
\author{RAY MARESCA}
\address{Department of Mathematics, Bowdoin College, 8600 College Station\\ Brunswick, Maine 04011, United States of America}
\email{r.maresca@bowdoin.edu}

\subjclass[2020]{
16G20: 18E40}

\keywords{Pictures, torsion classes, ghosts. Bridgeland stability, Harder-Narasimhan stratification, wall-and-chamber structures}

\begin{document}

\begin{abstract}
      For a finite dimensional algebra $\Lambda$, we consider a torsion class $\cG$ in $mod$-$\Lambda$, which is not necessarily finitely generated. We construct a wall-and-chamber structure for $\cG$ where the chambers are the connected components of the complement of the union of walls. We also consider ``infinitesimal chambers". To each chamber we associate a ``pseudo-torsion class'' and a ``pseudo-torsionfree class'' and show that they are all distinct. We consider ``green paths'' in the stability space and associate to them Harder-Narasimhan stratifications of $\cG$. This paper is part of a series of papers whose goal is to study the ``ghosts'' which are remnants of the indecomposable $\Lambda$-modules which do not lie in $\cG$.

In the special case when our torsion class is all of $mod$-$\Lambda$, we are in the classical well-known setting. All of our results apply to this classical setting. The ``pseudo-torsion classes'' are torsion classes. We point out that we do not take the closure of the set of walls. So, we get more chambers and our results are new even in this classical setting.
\end{abstract}

\maketitle

\tableofcontents

\section*{Introduction}\label{sec0}

{Given a torsion class $\cG$ in $mod\text-\Lambda$ \cite{Apostolos-Idun} we consider a subcategory of $\cG$ with the same objects but with only ``strict'' morphisms. Strict morphisms, along with other properties, have kernels in $\cG$  and, as with all morphisms in $\cG$, they have image and cokernel in $\cG$. In $\cG$ we define pseudo-torsion classes to be classes of objects of $\cG$ closed under what we call {``strict''} extensions, given by short exact sequences where both morphisms are strict, and having all strict quotients of its objects. Pseudo-torsionfree classes are defined similarly. They are closed under strict extension and contain all strict subobjects of its objects. We also consider pseudo-wide subcategories of $\cG$. These are the full subcategories of $\cG$ with its strict morphisms which are closed under strict extension and contain the kernel, image and cokernel of every strict morphism between its objects. When $\cG=mod\text-\Lambda$ these notions correspond to the usual notions of torsion classes, torsionfree classes and wide subcategories since all morphisms are strict in that case.}

We consider the stability diagram for the torsion class $\cG$ in which we take only the walls labeled with objects of $\cG$. These diagrams appeared in the first author's research in algebraic $K$-theory and in this paper, we provide a representation theoretic explanation. Some walls in this diagram appear as they do in the stability diagram for $mod\text-\Lambda$; however, other walls become larger due to our strict subobject and strict quotient conditions. More precisely, the stability space is $V=\Hom(K_0\Lambda,\RR)\cong \RR^n$ where $n$ is the number of simple $\Lambda$-modules. Every finitely generated module $M$, has dimension vector $[M]\in K_0\Lambda$ and, for every $\theta\in V$, we have $\theta([M])\in\RR$ which we denote simply as $\theta(M)$. 

For every $M\in\cG$ we have the ``pseudo-wall'' $D_\cG(M)$ labeled with $M$. This is the set of all $\theta\in V$ so that $\theta(M)=0$ and $\theta(M')\le0$ for all ``strict subjects'' $M'$ of $M$. We define the ``pseudo-chambers'' of $\cG$ to be the connected components of the complement of the union of these pseudo-walls. One of our main results is that each pseudo-chamber has an associated pseudo-torsion class and these pseudo-torsion classes are distinct. In the classical case \cite{BST}, the chambers are the components of the complement of the closure of the set of walls. This misses all the ``thin'' chambers which we also consider.

We define a ``green path'' in $V$ to be a smooth path $\theta_t$ so that, whenever this path goes through a wall, say $\theta_{t_0}\in D_\cG(M)$ it does so in the positive direction: 
\[
\frac{d}{dt}(\theta_t(M))|_{t=t_0}>0.
\]
 We also require that, for $t<<0$, $\theta_t$ should start in the negative octant where $\theta_t(M)<0$ and, for $t>>0$, we should have $\theta(M)>0$ for all nonzero $M\in mod\text-\Lambda$. This is a variation of Bridgelands's construction \cite{Bridgeland} which, together with \cite{King}, is the origin of the study of stability diagrams.

In the main part of the paper, we show that green paths give ``strict'' Harder-Narasimhan (HN) stratifications of $\cG$. We also obtain maximal ``strict'' forward Hom-orthogonal (FHO) sequences.

This process works exceptionally well when there are only ``thin'' walls. The situation becomes considerably more unwieldy when there are thick walls. An example of a thick wall is the ``null wall'' in the tame hereditary case. In our next paper \cite{IM2} we will deal with thick walls, which we call ``Level 1'' walls by ``refining'' them. Details will be given in the next paper, but we outline the first steps of the refinement process in the last section. We observe that $mod\text-\Lambda$ is a torsion class. So, our constructions apply to this classical case.

Section \ref{sec1} gives the basic definitions of strict morphisms, pseudo-torsion classes, pseudo-torsionfree classes and pseudo-wide subcategories. Analogous to the classical case \cite{Apostolos-Idun}, we prove the basic theorem (Theorem \ref{thm: duality between P,Q}) that pseudo-torsion classes and pseudo-torsionfree classes are dual in the sense that they are perpendicular under strict morphisms, i.e., there are no nonzero strict morphisms from a pseudo-torsion class to the corresponding pseudo-torsionfree class.

Section \ref{new sec2} constructs the wall and chamber structure in the stability diagram of our torsion class $\cG$. We assign a pseudo-torsion class to each pseudo-chamber, and show these are different for different chambers (Theorem \ref{thm: P(theta) are different in different path components}). We also assign two pseudo-torsion classes to each wall, one for each side of the wall, the ``positive'' side and the negative side. When the wall is ``thin'' we show that the pseudo-torsion class on the positive side of the wall is a minimal extension of the one on the negative side of the wall (Corollary \ref{cor: for thin walls, ov P is min ext of P}). The minimal extending module is the pseudo-brick which labels the wall. (Compare with the classical result in \cite{BCZ}.) The main theorem here is that the pseudo-torsion class assigned to each point in the stability space $\RR^n$ is constant on each pseudo-chamber (Theorem \ref{thm: P(theta) is constant on path components}\noans{\color{blue} renumber}) and on each green component of the union of pseudo-walls (Definition \ref{def: side components of walls}).

We also consider ``relative'' pseudo-torsion classes in the pseudo-wide subcategory which labels each thick wall. Fortunately, ``relative'' pseudo-wide subcategories are again pseudo-wide subcategories which helps with induction on deeper ``layers'' of refinements of thick walls.

We give several examples in this section. In one of these, ``ghosts'' and their corresponding ``derived objects'' will appear. Ghosts are the remnants of objects of $mod\text-\Lambda$ which do not lie in the torsion class $\cG$. If $A$ is one of these missing objects, we embed $A$ into an object $B\in\cG$ with quotient $C=B/A$. The ``ghost'' is the map $B\to C$ considered as an object of the ``pseudo-derived category'' of $\cG$. The same missing object can have several ghosts. Ghosts originated in \cite{GrInvRedrawn} and were further explained in \cite{MoreGhosts}.

In Section \ref{new sec3}, given a ``green path'', we construct a ``strict'' Harder-Narasimhan stratification of $\cG$. Each object of $\cG$ will have a unique strict filtration with subquotients in the ``slices'' of the HN-stratification. The objects in the slices can be arranged to form a ``strict forward Hom-orthogonal sequence'' (FHO) (Definition \ref{def: FHO}). In the presentation of this idea, it will be easier to construct the FHO-sequence first and the HN-stratification as a consequence. This discussion is bases on \cite{Bridgeland} and \cite{Linearity}.

Section \ref{new sec4} gives a preview of our next paper in which we explain how ``thick'' walls create an issue that we solve by ``resolving'' each thick wall into {\color{black}``quasi-thin''} layers and obtain an infinite dimensional stability diagram. We show that this (in general) infinite dimensional vector space has a wall and chamber structure and that a relative pseudo-torsion class is assigned to each chamber. We call this the ``second layer'' of a stratified stability space. This will have thick walls which we resolve to obtain the ``third layer'', etc. This infinitely iterated refinement process is necessary to construct maximal green sequence for our original category $\cG$. ``Ghosts'', as they are currently defined, appear only in the first layer for the simple reason that, in subsequent layers, we do not leave any of the objects out. However, we may get ``relative ghosts'' in the subsequent layers.


\section{Basic definitions}\label{sec1}

Let $\Lambda$ be a finite dimensional algebra over any field. Let $n$ be the rank of $\Lambda$. Then, up to isomorphisms, there are $n$ simple modules $S_1,S_2,\cdots,S_n$ covered by $n$ projective modules $P_1,\cdots,P_n$. We take only finitely generated right $\Lambda$-modules. The category of such modules is usually denoted $mod\text-\Lambda$.

We consider a fixed \textbf{torsion class} $\cG$ in $mod\text-\Lambda$. This is a class of finitely generated $\Lambda$-modules which is closed under extension and under taking of quotients. This means that for any short exact sequence of $\Lambda$-modules
\[
    0\to A\to B\to C\to 0
\]
if $A,C$ are in $\cG$ then so is $B$. And if $B\in \cG$ then $C\in\cG$. 

The kernel $A$ of a morphism $B\to C$ in $\cG$ might not be in $\cG$ and this is a problem that we need to get around. We will construct a new category: the ``strict category $\cG$'' with the same objects as $\cG$ but with only ``strict morphisms''. This new category will have good properties. Every morphism will have kernel, image and cokernel in $\cG$. Using these new morphisms we will construct ``pseudo-torsion classes'' and pseudo-torsion free classes.

\subsection{Strict morphisms and strict extensions}\label{ss11: strict morphisms}

\begin{defn}\label{def: strict subobject}
    Given an object $B$ in a torsion class $\cG$, a \textbf{subobject} is a submodule $B'$ of $B$ which lies in $\cG$. This includes $0$ and $B$. By a \textbf{strict subobject} we mean a subobject $A\subseteq B$ so that $A\cap B'$ also lies in $\cG$ for any other subobject $B'$ of $B$. We use the notation $A\cof B$ to indicate a monomorphism whose image is a strict subobject of $B$. We call such a monomorphism a \textbf{cofibration}. 
\end{defn}

Note that any simple subobject of an object $B\in\cG$ will be a strict subobject for trivial reasons.

\begin{lem}\label{lem: parallel inclusions}
    If $A$ is a strict subobject of $B$ and $B'$ is another subobject of $B$. Then $A\cap B'$ is a strict subobject of $B'$.
\end{lem}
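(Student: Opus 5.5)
We verify the two conditions in Definition~\ref{def: strict subobject} for $A\cap B'$ inside $B'$ directly.

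\emph{Step 1: $A\cap B'$ is a subobject of $B'$.} It is a submodule of $B'$. Since $A$ is a strict subobject of $B$ and $B'$ is a subobject of $B$, the definition gives $A\cap B'\in\cG$.

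\emph{Step 2: strictness.} Let $B''$ be any subobject of $B'$, so $B''\subseteq B'$ and $B''\in\cG$. We must show $(A\cap B')\cap B''\in\cG$. Since $B''\subseteq B'$,
\[
(A\cap B')\cap B'' = A\cap B''.
\]
Because $B''\subseteq B'\subseteq B$ and $B''\in\cG$, the module $B''$ is also a subobject of $B$. Strictness of $A$ in $B$ then gives $A\cap B''\in\cG$. Hence $A\cap B'$ is a strict subobject of $B'$.

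The argument uses only the fact that a subobject of $B'$ is automatically a subobject of $B$. This is immediate, because being a subobject only requires being a submodule lying in $\cG$, and that condition does not refer to the ambient module.
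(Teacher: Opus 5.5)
Your proof is correct and matches the paper's argument: any subobject $X$ of $B'$ is also a subobject of $B$, so $(A\cap B')\cap X=A\cap X\in\cG$. Your Step 1, which checks that $A\cap B'$ itself lies in $\cG$, only spells out a point the paper leaves implicit.
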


\begin{proof}
    Any subobject $X$ of $B'$ is also a subobject of $B$. So, $A\cap X$ lies in $\cG$. But $(A\cap B')\cap X=A\cap X$. So, $A\cap B'\cof B'$.
\end{proof}

\begin{thm}\label{thm: composition of cofibrations}
    Any composition of cofibrations is a cofibration.
\end{thm}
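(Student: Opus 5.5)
We have cofibrations $A\cof B$ and $B\cof C$, and identify $A\subseteq B\subseteq C$ as submodules. Since $A\in\cG$, $A$ is a subobject of $C$. To show that the composite is a cofibration, we must show that $A\cap C'\in\cG$ for every subobject $C'$ of $C$.

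The plan is to reduce to an intersection inside $B$ using the two strictness hypotheses in turn.

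First, since $B$ is a strict subobject of $C$ and $C'$ is a subobject of $C$, the definition gives $B\cap C'\in\cG$. Thus $B\cap C'$ is a submodule of $B$ lying in $\cG$, i.e.\ a subobject of $B$. (Lemma \ref{lem: parallel inclusions} says even more, namely that $B\cap C'$ is strict in $C'$, but we only need that it lies in $\cG$.)

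Second, since $A$ is a strict subobject of $B$ and $B\cap C'$ is a subobject of $B$, strictness of $A$ in $B$ gives $A\cap(B\cap C')\in\cG$.

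Finally, because $A\subseteq B$ we have $A\cap(B\cap C')=A\cap C'$. Hence $A\cap C'\in\cG$ for every subobject $C'$ of $C$, so $A\cof C$.

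For compositions of more than two cofibrations, the result follows by induction on the number of factors.

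There is no real obstacle here. The only point needing care is the first step: one must use the strictness of $B$ in $C$ to know that $B\cap C'$ is actually an object of $\cG$, since otherwise the strictness of $A$ in $B$ could not be applied to it.
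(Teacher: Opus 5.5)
Your proposal is correct and follows essentially the same argument as the paper. Both use strictness of $B$ in $C$ to get $B\cap C'\in\cG$, then strictness of $A$ in $B$ together with $A\cap(B\cap C')=A\cap C'$.
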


\begin{proof}
    Suppose that $A\cof B\cof C$ and $C'$ is a subobject of $C$. Then $B\cap C'$ is a subobject of $B$. So, $A\cap (B\cap C')=A\cap C'$ is a subobject of $A$. So, $A\cof C$.
\end{proof}

Note that, by Lemma \ref{lem: parallel inclusions}, we get an induced sequence of cofibrations: 
\[
A\cap C'\cof B\cap C'\cof C'.
\]

\begin{defn}\label{def: strict quotients}
    Given $B$ in a torsion class $\cG$, a \textbf{strict quotient} of $B$ is defined to be $B/A$ where $A$ is a strict subobject of $B$. We denote the quotient map by $B\onto B/A$ and call it a \textbf{fibration}. The resulting short exact sequence
    \[
    A\cof B\onto B/A
    \]
    will be called a \textbf{strict exact sequence} and
    we say that $B$ is a \textbf{strict extension} of $A$ by $B/A$. Thus, a short exact sequence is strict if and only if both maps are strict. 
\end{defn}

We will show that fibrations are closed under composition. The proof will be very similar to the proof of the Main Lemma which it will be convenient to state and prove now.

\begin{lem}[Main Lemma]\label{lemma A}
    Let $0\to A\to B\xrightarrow p C\to 0$ be a short exact sequence of objects in $\cG$ and let $0\to A'\to B'\to C'\to 0$ be an exact sequence of subobjects of $A,B,C$. I.e., $A'=A\cap B'$ and $C'=pB'$. If $B'$ is a strict subobject of $B$ then $A'\cof A$ and $C'\cof C$ are strict. In other words, $B'/A'\cof B/A$ is strict.

\end{lem}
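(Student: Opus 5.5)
The plan is to treat the two conclusions separately. The left-hand inclusion $A'\cof A$ follows from Lemma \ref{lem: parallel inclusions}. The right-hand inclusion $C'\cof C$ follows by pulling subobjects of $C$ back to $B$ and using the strictness of $B'$ there.

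\emph{Step 1: $A'\cof A$.} Since $A\in\cG$ and $A\subseteq B$, $A$ is a subobject of $B$. Because $B'$ is a strict subobject of $B$, $A'=A\cap B'$ lies in $\cG$. Lemma \ref{lem: parallel inclusions}, applied to the strict subobject $B'$ and the subobject $A$ of $B$, then says that $A\cap B'$ is a strict subobject of $A$. So $A'\cof A$.

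\emph{Step 2: $C'$ is a subobject of $C$.} $C'=pB'$ is a quotient of $B'\in\cG$. Since $\cG$ is closed under quotients, $C'\in\cG$.

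\emph{Step 3: $C'\cof C$.} Let $X\subseteq C$ be any subobject, so $X\in\cG$; we must show $C'\cap X\in\cG$. Put $Y=p^{-1}(X)\subseteq B$. There is a short exact sequence $0\to A\to Y\to X\to 0$ with $A,X\in\cG$, so $Y\in\cG$ by extension closure, and $Y$ is a subobject of $B$. Strictness of $B'$ gives $B'\cap Y\in\cG$.

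The key identity is
\[
p(B'\cap p^{-1}X)=pB'\cap X .
\]
The inclusion $\subseteq$ is clear. For $\supseteq$, if $c=p(b)$ with $b\in B'$ and $c\in X$, then $b\in p^{-1}X$.

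Hence $C'\cap X$ is a quotient of $B'\cap Y\in\cG$, so it lies in $\cG$. Since $X$ was arbitrary, $C'$ is a strict subobject of $C$. This identity, together with checking that the pullback $Y$ lies in $\cG$, is the only point needing care; the rest is formal.

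Finally, $B'/A'\cong C'$ and $B/A\cong C$, so the two steps together give the reformulation that $B'/A'\cof B/A$ is strict.
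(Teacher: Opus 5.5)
Your proof is correct and follows the paper's argument. You use Lemma \ref{lem: parallel inclusions} for $A'\cof A$, then pull a subobject $X\subseteq C$ back to $p^{-1}X\in\cG$, intersect with $B'$, and show that $C'\cap X$ is a quotient of $B'\cap p^{-1}X$. The only difference is that you check $p(B'\cap p^{-1}X)=pB'\cap X$ by a direct element chase, whereas the paper gets it, together with the kernel $A'$, from the Snake Lemma applied to a direct-sum diagram.
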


\begin{proof}
    We have already shown that $A'\cof A$. So, suppose $X$ is any subobject of $C$. Let $p^\ast X$ be the pull-back of $X$ to $B$. This gives a short exact sequence
    \[
        0\to A\to p^\ast X\to X\to 0
    \]
    Since $A,X\in\cG$, so is $p^\ast X$. Then $p^\ast X\cap B'\in \cG$ since $B'\cof B$. We claim that $X\cap C'$ is a quotient of $p^\ast X\cap B'$ and therefore also lies in $\cG$. This statement follows from the Snake Lemma applied to the following diagram with exact rows.
       \[
   \xymatrixrowsep{10pt}\xymatrixcolsep{10pt}
\xymatrix{
0\ar[r]& A' \oplus A\ar[d]\ar[r] &
	B'\oplus p^\ast X\ar[d]\ar[r] &
	C'\oplus X\ar[d]\ar[r]
	& 0\\
0\ar[r] & A \ar[r]& 
	B \ar[r]&
	C\ar[r]&0 
	} 
   \]
   The sequence of kernels is the exact sequence
   \[
   	0\to A'\to B'\cap p^\ast X\to C'\cap X\to 0.
   \]
   So, $C'\cap X\in\cG$ making $C'\cof C$.
\end{proof}

This lemma is illustrated by the following diagram which we call the \textbf{main diagram}.
\begin{equation}\label{eq: main diagram}
\xymatrixrowsep{10pt}\xymatrixcolsep{10pt}
\xymatrix{
& 0\ar[d] & 0\ar[d] & 0\ar[d]\\
0\ar[r]& A'\ar[d]\ar[r] &
	B'\ar[d]_i\ar[r] &
	C'\ar[d]\ar[r]
	& 0\\
0\ar[r]& A\ar[d]\ar[r] &
	B\ar[d]\ar[r]^p &
	C\ar[d]\ar[r]
	& 0\\
0\ar[r]& A''\ar[d]\ar[r] &
	B''\ar[d]\ar[r] &
	C''\ar[d]\ar[r]
	& 0\\
& 0 & 0 & 0	} 
\end{equation}
We observe that the entire diagram is determined by $B$, the subobject $B'$ and the epimorphism $p:B\to C$.
Indeed, $A=\ker p$, $C'$ is the image of $B'$ in $C$, $A'=\ker p\circ i$. $A'',B'',C''$ are the quotients of the vertical monomorphisms. If either $B'$ is a strict subobject of $B$ or $C$ is a strict quotient of $B$ then we will also have that $A'\in\cG$.

Lemma \ref{lemma A} states that if the middle vertical sequence $B'\cof B\onto B''$ is strict then so are the ``parallel'' sequences $A'\cof A\onto A''$ and $C'\cof C\onto C''$. By symmetry we can also say that, if the middle horizontal sequence $A\cof B\onto C$ is strict then so are the top and bottom sequences.

\begin{lem}\label{lem: all maps strict}
    In the main diagram \eqref{eq: main diagram}, if $B'$ is a strict subobject of $B$ and $C$ is a strict quotient of $B$ then all morphisms in \eqref{eq: main diagram} are strict.
\end{lem}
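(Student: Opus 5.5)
Assume $B'\cof B$ and $A\cof B$, where $A=\ker p$. We must show that all twelve nonzero maps in \eqref{eq: main diagram} are strict: the six monomorphisms are cofibrations and the six epimorphisms are fibrations. A fibration is by definition the quotient by a strict subobject. So it suffices to prove that the six monomorphisms $A'\to A$, $B'\to B$, $C'\to C$, $A'\to B'$, $A\to B$, $A''\to B''$ are cofibrations. Every epimorphism in the diagram is the cokernel of one of these. The one check needed is that all nine objects lie in $\cG$. The objects $B,C,B''$ are quotients of $B$, and $A,B'$ are in $\cG$ by hypothesis. Also $A'=A\cap B'\in\cG$ because $A$ is strict, $C'$ is a quotient of $B'$, $A''$ is a quotient of $A$, and $C''$ is a quotient of $C$.

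The vertical cofibrations come straight from Lemma \ref{lemma A}. The middle vertical $B'\cof B$ is a hypothesis, and Lemma \ref{lemma A} applied to it gives $A'\cof A$ and $C'\cof C$. For the horizontal cofibrations I use the symmetry remarked after Lemma \ref{lemma A}. The diagram is symmetric under transposition: the roles of $(A,B',B)$ are swapped, and the pair $(A\cof B, B\onto C)$ now plays the role of the middle vertical sequence. Applying Lemma \ref{lemma A} to the transposed diagram, with strict middle row $A\cof B\onto C$, gives $A'\cof B'$ and $A''\cof B''$, where $A''\to B''$ is the image of $A$ in $B''=B/B'$. The middle row $A\cof B$ is a hypothesis.

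To make the transposition rigorous, I would restate it concretely. In the transposed picture the middle object is $B$, the chosen strict subobject is $A$, and the epimorphism is $B\to B''$. Its kernel is $B'$, the intersection is $B'\cap A=A'$, and the image of $A$ in $B''$ is $A/A'=A''$. With this identification, Lemma \ref{lemma A} yields exactly $A'\cof B'$ and $A''\cof B''$. It requires $0\to B'\to B\to B''\to 0$ to be a short exact sequence of objects of $\cG$, which holds. This bookkeeping is the only real obstacle, namely identifying the transposed diagram's corners with the original ones. In particular, I have to confirm that the transposed "top row" is $A'\to A\to A''$ in the right order. The statement is symmetric in $A$ and $B'$, so I expect this to go through without trouble.

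Finally, the six epimorphisms are the cokernels of these six cofibrations: $A\onto A''$, $B\onto B''$ and $C\onto C''$ vertically, and $B'\onto C'$, $B\onto C$ and $B''\onto C''$ horizontally. Each is therefore a fibration. For example, $B''\to C''$ has kernel $A''$ by exactness of the bottom row, which is the $3\times3$ lemma. This completes the proof.
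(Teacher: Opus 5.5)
Your proposal is correct and follows the paper's argument. The paper justifies this lemma, in the remark just before it, by applying the Main Lemma to the strict middle column and then, by symmetry of the main diagram, to the strict middle row.

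Your concrete transposition does what that remark leaves implicit. With $A$ as the strict subobject and $0\to B'\to B\to B''\to 0$ as the exact sequence, the induced subsequence is $A'\to A\to A''$, and the Main Lemma gives $A'\cof B'$ and $A''\cof B''$. You also reduce the six fibrations to the six cofibrations via the definition of strict quotient.
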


Since the main diagram is determined by the middle horizontal and middle vertical sequences, we have the following consequences of the Main Lemma \ref{lemma A}.

\begin{prop}\label{prop: duality of strict ses}
Let $0\to A\to B\to C\to 0$ be a short exact sequence of objects in $\cG$. Then any strict quotient $B''$ of $B$ is an extension of a strict quotient $A''$ of $A$ by a strict quotient $C''$ of $C$.
\end{prop}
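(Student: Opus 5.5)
Set $B''=B/B'$ where $B'$ is a strict subobject of $B$, and let $p:B\to C$ be the epimorphism in the given sequence with kernel $A$. By the remark after the Main Lemma \ref{lemma A}, the data $B$, $B'\subseteq B$ and $p$ determine the whole main diagram \eqref{eq: main diagram}. Its entries are $A'=A\cap B'$, $C'=pB'$, $A''=A/A'$ and $C''=C/C'$.

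First I verify that every entry of the diagram lies in $\cG$. $C'$ is a quotient of $B'\in\cG$, and $C''$ and $B''$ are quotients of $C$ and $B$, so all three are in $\cG$. $A$ is a subobject of $B$ and $B'$ is strict, so $A'=A\cap B'\in\cG$. Finally $A''=A/A'$ is a quotient of $A$, hence in $\cG$.

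Next I apply Lemma \ref{lemma A} to the middle row $0\to A\to B\to C\to 0$ and the subsequence $0\to A'\to B'\to C'\to 0$. Since $B'\cof B$ is strict, the lemma gives that $A'\cof A$ and $C'\cof C$ are strict. Therefore $A''=A/A'$ is a strict quotient of $A$, and $C''=C/C'$ is a strict quotient of $C$.

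It remains to check that the bottom row $0\to A''\to B''\to C''\to 0$ is exact. The columns and the top two rows of \eqref{eq: main diagram} are exact, so I apply the snake lemma (equivalently the $3\times3$ lemma) to the map of short exact sequences from the top row to the middle row. All three vertical maps are injective, so the kernels vanish and the cokernel sequence $0\to A''\to B''\to C''\to 0$ is exact. This exhibits $B''$ as an extension of $A''$ by $C''$, as claimed. I see no real obstacle: the only nontrivial input is the strictness of $A'\cof A$ and $C'\cof C$, and that is exactly what Lemma \ref{lemma A} provides.
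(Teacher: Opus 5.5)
Your proof is correct and takes the same route as the paper. The paper states this proposition as an immediate consequence of the Main Lemma applied to the main diagram built from $B$, the strict subobject $B'$, and $p$. Your explicit checks that $A'=A\cap B'\in\cG$ and that the bottom row is exact (via the snake lemma) fill in details the paper leaves implicit.
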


We define an \textbf{admissible quotient} $B''$ of an object $B\in\cG$ as $B''=B/B'$ where $B'$ is a subobject of $B$ ($B'\in\cG$). (Recall that $B''$ is a strict quotient of $B$ if and only if $B'$ is a strict subobject of $B$.)

\begin{prop}\label{prop: old def of admis seq}
    Let $A\cof B\onto C$ be a strict extension. Then

    a) For any subobject $B'\subseteq B$ there are subobjects $A'\subseteq A$ and $C'\subseteq C$ making a strict extension $A'\cof B'\onto C'$.

    b) For any admissible quotient $B''$ of $B$ is an extension of an admissible quotient $A''$ of $A$ by an admissible quotient $C''$ of $C$. Furthermore, the extension $A''\cof B''\onto C''$ is a strict extension.
\end{prop}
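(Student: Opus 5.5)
The plan is to read both parts off the main diagram \eqref{eq: main diagram}. In each case we take the middle row to be the given strict extension $A\cof B\onto C$, with $p:B\onto C$, and the middle column to be $B'\subseteq B\onto B''=B/B'$. As observed after Lemma \ref{lemma A}, the whole diagram is then determined, with $A'=A\cap B'$, $C'=pB'$, $A''=A/A'$ and $C''=C/C'$. The rows and columns are exact by the $3\times 3$ lemma (equivalently, the Snake Lemma applied to the map from the top row to the middle row). The work is to check that the relevant objects lie in $\cG$ and that the relevant maps are strict.

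For (a), let $B'$ be any subobject of $B$. Since $A$ is a strict subobject of $B$ and $B'\in\cG$, we get $A'=A\cap B'\in\cG$, so $A'$ is a subobject of $A$. By Lemma \ref{lem: parallel inclusions}, $A'$ is in fact a strict subobject of $B'$. The object $C'=pB'\cong B'/A'$ is a quotient of $B'\in\cG$, so it lies in $\cG$ and is a subobject of $C$. Hence $A'\cof B'\onto C'$ is a strict extension, which proves (a).

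For (b), let $B''=B/B'$ with $B'\in\cG$, so that $B''\in\cG$ as a quotient of $B$. Part (a) already gives $A',C'\in\cG$. Therefore $A''=A/A'$ and $C''=C/C'$ are admissible quotients of $A$ and $C$, and the bottom row $0\to A''\to B''\to C''\to 0$ is exact. The remaining point, which I expect to be the only real step, is that this bottom row is strict, i.e.\ that $A''$ is a strict subobject of $B''$.

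For this I would apply the Main Lemma \ref{lemma A} to the transposed diagram. There, the short exact sequence in $\cG$ is the middle column $0\to B'\to B\to B''\to 0$. The strict subobject of its middle term is $A\cof B$, with $A\cap B'=A'$ and with image of $A$ in $B''$ equal to $A/A'=A''$. The Main Lemma then says that $A'$ is a strict subobject of $B'$ (recovering (a)) and that $A''$ is a strict subobject of $B''$. The corresponding quotient is $B''/A''\cong C''$ by the exactness of the bottom row. Thus $A''\cof B''\onto C''$ is a strict extension. The only care needed is in matching the roles in the transposed diagram (which object is the "$B'$" of the Main Lemma) and in confirming that all objects used there lie in $\cG$, which we have just checked.
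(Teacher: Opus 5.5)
Your proposal is correct and is essentially the paper's argument: the paper obtains this proposition from the Main Lemma~\ref{lemma A} applied with the roles of rows and columns of the main diagram interchanged. In the paper's words, ``if the middle horizontal sequence $A\cof B\onto C$ is strict then so are the top and bottom sequences,'' which is exactly your transposed application, with Lemma~\ref{lem: parallel inclusions} handling the top row.
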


{\color{black}
Combining Propositions \ref{prop: duality of strict ses} and \ref{prop: old def of admis seq} we obtain the following.

\begin{cor}\label{cor: strict quotient of strict sequence}
    Let $A\cof B\onto C$ be a strict extension. Then any strict quotient $B''$ of $B$ is a strict extension of a strict quotient $A''$ of $A$ by a strict quotient $C''$ of $C$.
\end{cor}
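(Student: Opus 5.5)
We build the main diagram \eqref{eq: main diagram} from the given strict extension $A\cof B\onto C$ as the middle row, and the strict quotient $B''=B/B'$ with $B'\cof B$ as the middle column. As noted after Lemma \ref{lemma A}, this data determines the whole diagram: $A'=A\cap B'$, $C'=pB'$, and $A''=A/A'$, $B''=B/B'$, $C''=C/C'$. The bottom row $0\to A''\to B''\to C''\to 0$ is then exact, by the $3\times 3$ (nine) lemma applied to the exact top and middle rows. So $B''$ is an extension of $A''$ by $C''$, and it remains to show three things: $A''$ is a strict quotient of $A$, $C''$ is a strict quotient of $C$, and the bottom row is a strict exact sequence.

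\textbf{Step 1.} Since $B'\cof B$ is strict, the Main Lemma \ref{lemma A} gives $A'\cof A$ and $C'\cof C$ strict. Hence $A''=A/A'$ and $C''=C/C'$ are strict quotients, as already recorded in Proposition \ref{prop: duality of strict ses}. For these quotients to make sense as objects we need $A'\in\cG$. This holds because $B'$ is a strict subobject and $A$ is a subobject of $B$ (it lies in $\cG$ by hypothesis), so $A'=A\cap B'\in\cG$. $C'\in\cG$ as a quotient of $B'$.

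\textbf{Step 2.} We must show the bottom row $A''\to B''\to C''$ is strict. Here we use that the middle row is strict, i.e.\ $A\cof B$. Invoke the symmetry of the main diagram: transposing it swaps rows and columns. In the transposed diagram the middle column is the strict sequence $A\cof B\onto C$, and the Main Lemma yields that the ``parallel'' sequence $A''\to B''\to C''$ is strict. Concretely, apply Lemma \ref{lemma A} to the short exact sequence $0\to B'\to B\to B''\to 0$ with the strict subobject $A\cof B$. The induced exact sequence of subobjects is $0\to A\cap B'\to A\to \mathrm{im}(A\to B'')\to 0$. So the image of $A$ in $B''$, which is $A/A'=A''$, is a strict subobject of $B''$. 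Its cokernel $B''/A''\cong B/(A+B')\cong C/C'=C''$ is then a strict quotient. This is exactly the content of Lemma \ref{lem: all maps strict}, which we may cite directly: both $B'\cof B$ and $B\onto C$ are strict, so all morphisms in the diagram are strict.

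\textbf{Main obstacle.} The only delicate point is Step 2: correctly identifying the image of $A$ in $B''$ with $A''$, and the quotient $B''/A''$ with $C''$. This requires checking that $A\cap B'=A'$ and $B/(A+B')\cong C/pB'$, both of which are routine. Given Lemma \ref{lem: all maps strict}, the corollary reduces to combining it with the exactness of the bottom row.
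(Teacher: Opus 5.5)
Your proof is correct and takes essentially the paper's route. The paper obtains the corollary by combining Proposition \ref{prop: duality of strict ses} (the Main Lemma \ref{lemma A} applied with $B'\cof B$ strict, so $A''$ and $C''$ are strict quotients) with Proposition \ref{prop: old def of admis seq}(b) (the Main Lemma applied in the transposed direction using $A\cof B$, so $A''\cof B''\onto C''$ is strict). These are exactly your Steps 1 and 2, packaged together in Lemma \ref{lem: all maps strict}.
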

}

\begin{thm}\label{thm: composition of fibrations}
    Any composition of fibrations is a fibration.
\end{thm}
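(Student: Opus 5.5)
Let the two fibrations be $B\onto C$ with strict kernel $A\cof B$, and $C\onto D$ with strict kernel $K\cof C$. The composite $q:B\onto D$ is an epimorphism with kernel $E=p^{-1}(K)=p^\ast K$, the pull-back of $K$ to $B$. The goal is to show that $E$ is a strict subobject of $B$, i.e. that $E\cap X\in\cG$ for every subobject $X\subseteq B$.

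First, $E$ is indeed a subobject of $B$. It sits in a short exact sequence $0\to A\to E\to K\to 0$, and since $A,K\in\cG$ and $\cG$ is closed under extensions, $E\in\cG$. Also $A\cof E$ is strict by Lemma \ref{lem: parallel inclusions}, since $A\cap E=A$.

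Now fix a subobject $X\subseteq B$ and imitate the proof of the Main Lemma \ref{lemma A}. Consider the map of short exact sequences
\[
0\to A\cap X\to X\to pX\to 0
\]
into $0\to A\to B\to C\to 0$. Since $A\cof B$, we have $A\cap X\in\cG$. The image $pX$ is a quotient of $X$, hence lies in $\cG$, so it is a subobject of $C$, and therefore $K\cap pX\in\cG$ because $K\cof C$.

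Next restrict the sequence over $X$ to $E\cap X$, obtaining
\[
0\to A\cap X\to E\cap X\to p(E\cap X)\to 0 .
\]
We have $p(E\cap X)\subseteq K\cap pX$. Conversely, if $y=p(x)\in K$ with $x\in X$, then $x\in p^{-1}(K)=E$, so $p(E\cap X)=K\cap pX$. This identification is the Snake Lemma step of the Main Lemma, now applied to $A\oplus(A\cap X)\to E\oplus X\to K\oplus pX$ mapping to $A\to B\to C$, where the sequence of kernels is $0\to A\cap X\to E\cap X\to K\cap pX\to 0$. Thus $E\cap X$ is an extension of $K\cap pX\in\cG$ by $A\cap X\in\cG$, so $E\cap X\in\cG$. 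Hence $E\cof B$, and $B\onto B/E\cong D$ is a fibration.

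The main obstacle is the identification $p(E\cap X)=K\cap pX$, which is the only place where the pullback structure $E=p^\ast K$ is used. It is elementary, but it must be stated carefully because the whole argument depends on it. The remaining subtle point is that $pX$ must be a genuine subobject of $C$, so that strictness of $K$ can be applied to it. This holds because torsion classes are closed under quotients.
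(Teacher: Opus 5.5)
Your proof is correct and takes the same approach as the paper. The paper also combines the kernel sequence $0\to A\to\ker(gf)\to\ker g\to 0$ with the sequence $0\to A\cap X\to X\to f(X)\to 0$ via the Snake Lemma, obtaining $0\to A\cap X\to\ker(gf)\cap X\to\ker g\cap f(X)\to 0$, and concludes by extension-closure of $\cG$; your elementwise check of $p(E\cap X)=K\cap pX$ and your explicit verification that $E\in\cG$, which the paper leaves implicit, are useful details within that route.
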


\begin{proof} 
    Suppose that $f:B\onto C$ and $g:C\onto D$ are fibrations. Let $A=\ker f$, $C'=\ker g$. Then we have short exact sequence of kernels:
    \[
        0\to \ker f\to \ker gf\to \ker g\to0.
    \]
    This is a subsequence of the exact sequence $0\to A\to B\to C\to 0$ and we need to prove that $B'=\ker gf$ is a strict subobject of $B$. So, let $X$ be any other subobject of $B$. Then we get an induced short exact sequence $0\to A\cap X\to X\to f(X)\to 0$. As in the proof of the Main Lemma \ref{lemma A}, we take direct sum with the kernel sequence, giving the following diagram with exact rows.
           \[
   \xymatrixrowsep{10pt}\xymatrixcolsep{10pt}
\xymatrix{
0\ar[r]& A \oplus A\cap X\ar[d]\ar[r] &
	B'\oplus X\ar[d]\ar[r] &
	C'\oplus f(X)\ar[d]\ar[r]
	& 0\\
0\ar[r] & A \ar[r]& 
	B \ar[r]&
	C\ar[r]&0 
	} 
   \]
    By the Snake Lemma, we get an exact sequence of kernels of the vertical maps:
    \[
    0\to A\cap X\to B'\cap X\to C'\cap f(X)\to 0.
    \]
    But, $A\cap X\in \cG$ since $A\cof B$ and $C'\cap f(X)\in \cG$ since $C'\cof C$. Therefore, the extension $B'\cap X\in\cG$. So, $B'=\ker gf\cof B$ making $gf:B\onto D$ a fibration.
\end{proof}

\begin{defn}\label{def: strict morphism}
    We define a \textbf{strict morphism} to be a morphism $f:A\to B$ between two objects of $\cG$ with $\ker f$ in $\cG$ so that $\ker f$ is a strict subobject of $A$ and $\im f$ is a strict subobject of $B$. Equivalently, $f$ is the composition of a fibration $A\onto X$ and cofibration $X\cof B$.  
\end{defn}

Note that, by definition, strict morphisms have kernel, image and cokernel in $\cG$.

\begin{thm}
    If $f:A\to B$ and $g:B\to D$ are strict maps, so is $g\circ f:A\to D$.
\end{thm}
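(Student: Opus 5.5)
Write $f$ as a fibration $A\onto X$ followed by a cofibration $X\cof B$, where $X=\im f$, and write $g$ as $B\onto Y$ followed by $Y\cof D$, where $Y=\im g=B/\ker g$. Then $g\circ f$ is the composite
\[
A\onto X\cof B\onto Y\cof D.
\]
By Theorems \ref{thm: composition of cofibrations} and \ref{thm: composition of fibrations}, fibrations and cofibrations are each closed under composition. So it suffices to rewrite the middle piece $X\cof B\onto Y$ as a fibration followed by a cofibration. Then $g\circ f$ becomes (fibration)(fibration)(cofibration)(cofibration), which is strict.

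For the middle piece I will use the main diagram \eqref{eq: main diagram} with middle row $0\to \ker g\to B\xrightarrow{p} Y\to 0$ and subobject $B'=X$. The middle row is strict because $g$ is strict, and $X\cof B$ is strict. Lemma \ref{lem: all maps strict} then says that every morphism in the diagram is strict. In that diagram the top row is $0\to X\cap\ker g\to X\to p(X)\to 0$, so $X\to p(X)$ is a fibration. The right column gives $p(X)\cof Y$, a cofibration. The composite $X\to B\to Y$ equals $X\onto p(X)\cof Y$, which is what we need.

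Putting this together, $g\circ f$ equals $(A\onto X\onto p(X))$ followed by $(p(X)\cof Y\cof D)$. By the two composition theorems the first part is a fibration and the second is a cofibration. Hence $g\circ f$ is strict in the sense of Definition \ref{def: strict morphism}. Explicitly, $\ker(gf)$ is the kernel of the composite fibration $A\onto p(X)$, so it lies in $\cG$ and is a strict subobject of $A$. Also $\im(gf)=p(X)=g(\im f)$ is a strict subobject of $D$.

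The main point to check is the application of Lemma \ref{lem: all maps strict}, which needs the middle vertical sequence to be strict, i.e.\ $B'=X$ a strict subobject of $B$, and the middle horizontal sequence to be strict, i.e.\ $Y$ a strict quotient of $B$. Both hold by the strictness of $f$ and $g$. I also need to confirm that the kernel $X\cap\ker g$ of the top row lies in $\cG$. This follows because $\ker g\cof B$ and $X$ is a subobject of $B$. It is also noted after the main diagram.
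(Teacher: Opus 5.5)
Your proposal is correct and follows essentially the same route as the paper. Both build the main diagram from the cofibration $\im f\cof B$ and the fibration $B\onto \im g$, and both apply Lemma \ref{lem: all maps strict} to factor $g\circ f$ as $A\onto \im f\onto g(\im f)$ followed by $g(\im f)\cof \im g\cof D$. Your extra checks, that $\im f\cap\ker g\in\cG$ via Lemma \ref{lem: parallel inclusions} and that the kernel and image of $g\circ f$ are as stated, fill in details the paper leaves implicit.
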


\begin{proof} Let $B'$ be the image of $f$ and let $C$ be the image of $g$. Then we have the lower part of the following commuting diagram where ascending maps are fibrations and descending maps are cofibrations.
        \[
    \xymatrixrowsep{10pt}\xymatrixcolsep{20pt}
\xymatrix{
&& X\ar[dr]^{\exists \widetilde{f_2}}\\
& B' \ar[dr]^{f_2}\ar[ur]^{\exists \widetilde{g_1}} && C\ar[dr]^{g_2}\\
A \ar[rr]^f\ar[ur]^{f_1}& &
	B \ar[rr]^g\ar[ur]^{g_1}& & D
	}
    \]
    We claim that there is an object $X$ as indicated in the diagram. Then composing $A\onto B'\onto X$ gives a fibration $A\onto X$ by Theorem \ref{thm: composition of fibrations} and composition of $X\cof C\cof D$ gives a cofibration $X\cof D$ by Theorem \ref{thm: composition of cofibrations}. Then $gf:A\to D$ would factor as $A\onto X\cof D$ making it a strict morphism. 
    
    The main diagram \eqref{eq: main diagram} shows that $X$ exists. The main diagram can be generated from any cofibration $B'\cof B$ and any fibration $B\onto C$. By Lemma \ref{lem: all maps strict} all arrows in the main diagram will be strict. So, we can let $X=C'$ in the main diagram and all our claims are satisfied. Our theorem is proved.
\end{proof}

We also need ``strict subquotients'' of objects $M\in\cG$. This has several equivalent definitions.

{
\begin{thm}\label{def/thm: subquotient}
    The following characterizations of \textbf{strict subquotient} of $M\in\cG$ are equivalent.

    (a) A strict subquotient of $M$ is a strict quotient of a strict subobject of $M$

    (b) A strict subquotient of $M$ is a strict subobject of a strict quotient of $M$.
    
    (c) Strict subquotients of $M$ are $B/A$ where $A,B$ are strict subobjects of $M$ so that $A\subseteq B$.
\end{thm}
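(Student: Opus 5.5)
We will show (a)$\Leftrightarrow$(c) and (b)$\Leftrightarrow$(c). Throughout we rely on three earlier results: Theorem \ref{thm: composition of cofibrations} (composites of cofibrations are cofibrations), Theorem \ref{thm: composition of fibrations} (composites of fibrations are fibrations), and the Main Lemma \ref{lemma A}, which transports strictness along short exact sequences.

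\emph{(c)$\Rightarrow$(a).} Suppose $A\subseteq B$ are both strict subobjects of $M$. By Lemma \ref{lem: parallel inclusions}, $A=A\cap B$ is a strict subobject of $B$. Hence $B/A$ is a strict quotient of the strict subobject $B$.

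\emph{(a)$\Rightarrow$(c).} Suppose $B\cof M$ and $A\cof B$. Then $A\cof M$ by Theorem \ref{thm: composition of cofibrations}, so $B/A$ has the form required in (c).

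\emph{(c)$\Rightarrow$(b).} Take $A\subseteq B$ strict in $M$. Apply the Main Lemma \ref{lemma A} to the strict exact sequence $A\cof M\onto M/A$ and the strict subobject $B\cof M$. The induced sequence of subobjects is $A\cap B=A\to B\to B/A$. The lemma then says that the image $B/A$ of $B$ in $M/A$ is a strict subobject of $M/A$. So $B/A$ is a strict subobject of the strict quotient $M/A$.

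\emph{(b)$\Rightarrow$(c).} Let $A\cof M$ and let $X\cof M/A$ be a strict subobject. Write $p:M\onto M/A$ and set $B=p^{-1}(X)$, so that $A\subseteq B$ and $X=B/A$. It remains to show that $B$ is a strict subobject of $M$. Note that $B$ lies in $\cG$, being an extension of $X$ by $A$. The inclusion $X\cof M/A$ gives a fibration $M/A\onto (M/A)/X\cong M/B$. Composing it with the fibration $M\onto M/A$ gives a fibration $M\onto M/B$ by Theorem \ref{thm: composition of fibrations}. Its kernel $B$ is therefore a strict subobject of $M$, as needed.

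The main point to check is this last implication, where strictness of $B$ in $M$ has to be recovered from strictness of $A$ in $M$ and of $B/A$ in $M/A$. Theorem \ref{thm: composition of fibrations} handles exactly this once we identify $\ker(M\to M/A\to (M/A)/X)$ with $p^{-1}(X)=B$. Should that route run into trouble, we can instead repeat the snake-lemma argument from the proof of Theorem \ref{thm: composition of fibrations}. For any subobject $Y\subseteq M$ this gives an exact sequence $0\to A\cap Y\to B\cap Y\to X\cap p(Y)\to 0$ whose outer terms lie in $\cG$. Hence $B\cap Y\in\cG$, which is the strictness of $B$ directly.
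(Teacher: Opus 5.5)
Your proof is correct and is essentially the paper's: each step uses the same ingredient. Lemma~\ref{lem: parallel inclusions} gives (c)$\Rightarrow$(a), composition of cofibrations together with the Main Lemma~\ref{lemma A} carries a strict subquotient into a strict subobject of $M/A$, and composition of fibrations recovers $B\cof M$. The only difference is bookkeeping: the paper runs the cycle (a)$\Rightarrow$(b)$\Rightarrow$(c)$\Rightarrow$(a), whereas your (a)$\Rightarrow$(c) followed by (c)$\Rightarrow$(b) is just the paper's (a)$\Rightarrow$(b) split into its two halves.
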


\begin{proof}
We will show that the collection of subquotients $B/A$ of $M$ described in (a), (b), (c) are the same.

In (a) we have a strict subobject $B\cof M$ and a strict quotient $B/A$ of $B$. This implies $A\cof B$ is strict. So, $A\cof M$ is strict by Theorem \ref{thm: composition of cofibrations}. By the Main Lemma \ref{lemma A}, $B/A$ is a strict subobject of $M/A$. So, we have (b).

In (b) we have $A\cof M$ strict with strict quotient $M/A$ and $B/A\cof M/A$. Thus,
\[
    M/A\onto \frac{M/A}{B/A}=M/B
\]
is a fibration. So, the composition $M\onto M/A\onto M/B$ is strict. So, $B\cof M$ is a cofibration and $B/A$ satisfies (c).

In (c) we have $A,B$ strict subobjects of $M$ with $A\subset B$. By Lemma \ref{lem: parallel inclusions}, we know that $A\cof B$ is strict. So, $B/A$ is a strict quotient of $B$ which is a strict subobject of $M$. So, (a) is satisfied. 

Thus $(a)\subseteq (b)\subseteq (c)\subseteq (a)$. So the three sets of subquotients are equal.
\end{proof}

\begin{rem}\label{rem: inherited subquotients}
Given $A\cof B\cof M$ as above suppose that $X,Y$ are subobjects of $M$ so that $X\subseteq A\subseteq B\subseteq Y$. Then, by Lemma \ref{lem: parallel inclusions}, $A,B$ are strict subobjects of $Y$ and, by the Main Lemma \ref{lemma A}, $A/X,B/X$ are strict subobjects of $Y/X$. Therefore, $(B/X)/(A/X)\cong B/A$ is a strict subquotient of $Y/X$.  
\end{rem}
}

{%
\begin{eg}\label{eg: A3 example}
Let $\Lambda$ be the path algebra of the quiver $1\ot 2\ot 3$ and let $\cG$ be the torsion class of all modules $M$ so that $\Hom(M,S_1)=0$. Here is the Auslander-Reiten quiver of $\Lambda$ with objects in $\cG$ circled. 
\begin{center}
\begin{tikzpicture}[scale=1.2]
{ 
\begin{scope}[xshift=4cm,yshift=-.5cm]
\draw[red!80!black] (0,0) node{$S_1$}; 
\draw(0.5,0.6)circle[radius=2.4mm];
\draw (0.5,0.6) node{$P_2$}; 
\draw (1.5,0.6) node{$I_2$};
\draw(1.5,0.6)circle[radius=2.4mm];
\draw (1,1.2) node{$P_3$};
\draw(1,1.2)circle[radius=2.4mm];
\draw (1,0) node{$S_2$}; 
\draw(1,0)circle[radius=2.4mm];
\draw (2,0) node{$S_3$}; 
\draw(2,0)circle[radius=2.4mm];
\draw[->] (0.13,0.15)--(.33,.4);
\draw[->] (1.13,0.15)--(1.33,0.4);
\draw[->] (0.63,0.75)--(.83,1);
\draw[->] (0.63,0.37)--(.83,.14);
\draw[->] (1.63,.37)--(1.83,.14);
\draw[->] (1.13,.97)--(1.33,.74);
\end{scope}
}
\end{tikzpicture}
\end{center}

The following example of the main diagram shows that the almost split sequence $0\to P_2\to P_3\oplus S_2\to I_2\to 0$ and the split exact sequence $0\to P_3\to P_3\oplus S_2\to S_2\to0$ are not strict (since $S_1\notin\cG$).
\[
\xymatrixrowsep{12pt}\xymatrixcolsep{12pt}
\xymatrix{
& 0\ar[d] & 0\ar[d] & 0\ar[d]\\
0\ar[r]& S_1\ar[d]\ar[r] &
	P_3\ar[d]\ar[r] &
	I_2\ar[d]^=\ar[r]
	& 0\\
0\ar[r]& P_2\ar[d]\ar[r] &
	P_3\oplus S_2\ar[d]\ar[r] &
	I_2\ar[d]\ar[r]
	& 0\\
0\ar[r]& S_2\ar[d]\ar[r]^= &
	S_2\ar[d]\ar[r] &
	0\ar[d]\ar[r]
	& 0\\
& 0 & 0 & 0	} 
\]
On the other hand, any extension $A\cof B\onto C$ in $\cG$ will be strict if $S_1$ is not a submodule of $A$. The converse of this statement does not hold: $P_2\cof P_3\onto S_3$ is strict even though $S_1\subset P_2$ since $P_2$ is the only proper subobject of $P_3$.

Also $S_2$ is a strict subobject of $P_3\oplus S_2$ since $S_2$ is simple.
\end{eg}
}

\begin{defn}\label{def: strict G}
    We consider the category having the same objects as $\cG$ but with only strict morphisms. We call this the \textbf{strict category $\cG$}.
\end{defn}

There is one subtle point we need to settle to avoid confusion for the words ``kernel'' and ``cokernel''. By default, $\ker f$ and $\coker f$ are defined to be the kernel and cokernel of $f$ in $\cG$.

\begin{prop}\label{prop: ker and coker in G}
    Let $f:A\to B$ be a strict morphism in $\cG$. Then $f$ has kernel and cokernel in the strict category $\cG$ and these agree with the kernel and cokernel of $f$ in $\cG$. In other words:
    \begin{enumerate}
        \item Given a strict morphism $g:X\to A$ so that $fg=0$, the induced map $\widetilde g:X\to \ker f$ is strict.
       \item For any strict morphism $h:B\to Y$ so that $hf=0$, the induced map $\overline h:\coker f\to Y$ is strict.
    \end{enumerate}
\end{prop}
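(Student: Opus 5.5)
We prove (1) directly and then obtain (2) by the dual argument. Throughout, $f$ factors as $A\onto \im f\cof B$ with $K=\ker f$ a strict subobject of $A$. Since $\ker f$ and $\coker f$ are computed in $\cG$, the universal maps $\widetilde g$ and $\overline h$ exist and are unique. The only thing to check is that these maps are strict.

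For (1), let $g:X\to A$ be strict with $fg=0$. Then $\im g\subseteq K$, and $\im g$ is a strict subobject of $A$. Since $K\subseteq A$ is a subobject, Lemma \ref{lem: parallel inclusions} shows that $\im g=\im g\cap K$ is a strict subobject of $K$. The kernel of $\widetilde g$ equals $\ker g$, which is already a strict subobject of $X$. Hence $\widetilde g$ factors as $X\onto \im g\cof K$, so it is strict.

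For (2), let $h:B\to Y$ be strict with $hf=0$. Write $I=\im f$, which is a strict subobject of $B$, so that $\coker f=B/I$. Since $hf=0$ we have $I\subseteq\ker h=:L$, and $L$ is a strict subobject of $B$. By Lemma \ref{lem: parallel inclusions} with $B'=L$, $I=I\cap L$ is a strict subobject of $L$. Then, by Remark \ref{rem: inherited subquotients}, or equivalently the Main Lemma \ref{lemma A} applied to $0\to I\to B\to B/I\to0$ with subobject $L$, the module $L/I$ is a strict subobject of $B/I$. Now $\ker\overline h=L/I$, and $\im\overline h=\im h$, which is a strict subobject of $Y$. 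So $\overline h$ factors as $B/I\onto B/L\cof Y$, where $B/L=(B/I)/(L/I)$. Hence $\overline h$ is strict.

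The only potential obstacle is the strictness of $L/I\subseteq B/I$ in part (2), that is, showing that the kernel of the induced map is strict in the quotient. This is exactly what the Main Lemma provides, since both $I$ and $L$ are strict in $B$ and $I\subseteq L$. Everything else follows formally from Lemma \ref{lem: parallel inclusions}.
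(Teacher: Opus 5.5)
Your proof is correct and follows essentially the same route as the paper. Part (1) uses Lemma~\ref{lem: parallel inclusions} applied to $\im g\subseteq\ker f\subseteq A$, and part (2) uses the Main Lemma~\ref{lemma A} applied to $0\to\im f\to B\to\coker f\to 0$ with the strict subobject $\ker h$, giving $\ker\overline h=\ker h/\im f$ strict in $\coker f$; you also explicitly check $\ker\widetilde g$ and $\im\overline h$, which the paper leaves implicit.
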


\begin{proof}
    (1) follows from Lemma \ref{lem: parallel inclusions}: Since $g$ is strict, its image is a strict subobject of $A$. Since $fg=0$, $\im g\subset \ker f$. By Lemma \ref{lem: parallel inclusions}, $\im g$ is a strict subobject of $\ker f$. So, the lifting $\widetilde g:X\to \ker f$ is strict. 

    (2) Similarly, if $h:B\to Y$ factors through $\coker f$, we have a factorization of $h$ as the composition $B\xrightarrow p \coker f\xrightarrow{\overline h} Y$. This gives an exact sequence of kernels
    \[
    0\to \ker p\to \ker h\to \ker\overline h\to 0
    \]
    which is a subsequence of the exact sequence
    \[
    0\to \ker p\to B\to \coker f\to 0.
    \]
    Since $h$ is strict, $\ker h$ is a strict subobject of $B$. By the Main Lemma \ref{lemma A}, this implies that $\ker\overline h$ is a strict subobject of $\coker f$. Therefore, the induced map $\overline h:\coker f\to Y$ is strict.

    Thus, the notions of kernel and cokernel agree in $\cG$ and the strict category $\cG$. 
\end{proof}

\subsection{Pseudo-torsion pairs}\label{ss12: p-tor pair}

Now we define ``pseudo-torsion classes'' and ``pseudo-torsionfree classes'' and show that they form strict Hom-perpendicular pairs.

\begin{defn}\label{def: pseudo-torsion}
A nonempty set of objects $\cP\subseteq\cG$ is called a \textbf{pseudo-torsion class} if the following hold.
\begin{enumerate}
\item $\cP$ is closed under strict quotients, i.e., if $A\onto B$ is a fibration and $A\in\cP$ then so is $B$. In particular, $0\in\cP$.
\item $\cP$ is closed under strict extensions.
\end{enumerate}
\end{defn}

\begin{defn}\label{def: pseudo-torsionfree}
A nonempty class $\cQ\subseteq\cG$ is called a \textbf{pseudo-torsionfree class} if the following hold.
\begin{enumerate}
\item $\cQ$ is closed under strict subobjects, i.e., if $A\cof B$ is a cofibration and $B\in\cQ$ then so is $A$. In particular, $0\in\cQ$.
\item $\cQ$ is closed under strict extensions.
\end{enumerate}
\end{defn}

\begin{defn}\label{def: perp class}
    For any $\cX\subseteq\cG$, its \textbf{right perpendicular class} $\cX^\perp$ is defined to be the class of all $Y\in \cG$ so that there are no nonzero strict morphisms $X\to Y$ for any $X\in\cX$. Similarly, for any $\cY\subseteq\cG$, the \textbf{left perpendicular class} $\,^\perp \cY$ is the class of all $X\in\cG$ so that there are no nonzero strict morphisms from $X$ to any object in $\cY$.
\end{defn}

\begin{lem} \label{lem: perps are pseudo torsion (free) classes}
    Let $\cX \subset \cG$ be a subclass of objects in a torsion class $\cG$. 
    \begin{enumerate}
        \item $\,^\perp \cX$ is a pseudo-torsion class.
        \item $\cX^{\perp}$ is a pseudo-torsionfree class.
    \end{enumerate}
\end{lem}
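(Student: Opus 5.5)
We prove (1); part (2) follows by the dual argument, interchanging subobjects with quotients and cofibrations with fibrations. The set $\,^\perp\cX$ is nonempty because it contains $0$. Two things remain to check: closure under strict quotients and closure under strict extensions. In both, the strategy is to take a nonzero strict morphism out of the new object and produce from it a nonzero strict morphism out of an object already known to lie in $\,^\perp\cX$. This uses the fact, proved above, that compositions of strict morphisms are strict.

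For closure under strict quotients, let $X\in\,^\perp\cX$ and let $p:X\onto X''$ be a fibration. Suppose $h:X''\to Y$ is a nonzero strict morphism with $Y\in\cX$. The composition $hp:X\to Y$ is strict by the composition theorem, and it is nonzero because $p$ is surjective. This contradicts $X\in\,^\perp\cX$, so $X''\in\,^\perp\cX$.

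For closure under strict extensions, let $A\cof B\onto C$ be strict with $A,C\in\,^\perp\cX$. Suppose $h:B\to Y$ is strict with $Y\in\cX$.
\begin{enumerate}
\item The restriction $h|_A$ is the composition of the cofibration $A\cof B$ with $h$. Cofibrations are strict morphisms, so $h|_A$ is strict, and therefore $h|_A=0$ because $A\in\,^\perp\cX$.
\item Since $h$ vanishes on $A=\ker(B\onto C)$, it factors through $C=\coker(A\to B)$ as $\overline h:C\to Y$.
\item By Proposition \ref{prop: ker and coker in G}(2), applied to the strict morphism $A\to B$, the induced map $\overline h$ is strict.
\item Since $C\in\,^\perp\cX$, we get $\overline h=0$, and hence $h=0$.
\end{enumerate}
So $B\in\,^\perp\cX$.

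For (2), the argument is the same with arrows reversed. Closure under strict subobjects follows by composing a strict map $X\to A$ with the cofibration $A\cof B$; the composite is strict, and it is nonzero whenever the original map is, since a cofibration is injective. For closure under strict extensions, take a strict $g:Y\to B$ with $Y\in\cX$. Its composite with $B\onto C$ is strict, so it vanishes because $C\in\cX^\perp$. Then $g$ lands in $A=\ker(B\onto C)$, and the lift $Y\to A$ is strict by Proposition \ref{prop: ker and coker in G}(1). It is therefore zero, and so $g=0$.

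The only step that is not formal is strictness of the induced map on the cokernel (respectively the lift into the kernel). This is exactly what Proposition \ref{prop: ker and coker in G} supplies, via the Main Lemma \ref{lemma A}, so no real obstacle remains.
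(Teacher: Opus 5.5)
Your proof is correct and follows the paper's argument essentially step for step. Closure under strict quotients comes from composing with the fibration. Closure under strict extensions comes from noting that the restriction to the strict subobject vanishes, then using Proposition \ref{prop: ker and coker in G}(2) to see that the induced map on the quotient is strict and hence zero. The only difference is that you write out the dual argument for (2) via Proposition \ref{prop: ker and coker in G}(1), which the paper leaves as ``analogous.''
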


\begin{proof}
    We prove (1) as the proof of (2) is analogous. Let $M \in \,^\perp \cX$ and $f:M\onto N$ be a strict quotient of $M$. Suppose that $g: N \rightarrow X$ is a nonzero strict morphism where $X \in \cX$. Then, the composition $M\onto N\to X$ is a nonzero strict morphism contradicting the assumption that $M \in \,^\perp \cX$. Therefore, $\,^\perp \cX$ is closed under strict quotients.
    
   Now suppose $M,N \in \,^\perp \cX$ and $N \cof E \onto M$ is a strict extension. Let $g:E \rightarrow X$ be a strict morphism. The composition $N\cof E\to X$ is strict and therefore must be 0 since $N \in \,^\perp \cX$. So, $g:E\to X$ factors though a morphism $\overline g: M\to X$. By Proposition \ref{prop: ker and coker in G}, $\overline g$ is strict and therefore equal to 0. So, $g=0$ and $E\in \,^\perp \cX$.
   
    Since $\,^\perp \cX$ is closed under strict quotients and strict extensions, it is a pseudo-torsion class.
\end{proof}

\begin{thm}\label{thm: duality between P,Q}
    The right perpendicular class $\cP^\perp$ of any pseudo-torsion class $\cP$ is a pseudo-torsionfree class. The left perpendicular class of any pseudo-torsionfree class is a pseudo-torsion class. Furthermore, this gives a bijection between the set of pseudo-torsion classes and the set of pseudo-torsionfree classes, i.e.: \[
    \,^\perp(\cP^\perp)=\cP.\]
\end{thm}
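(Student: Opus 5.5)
The plan is to prove the two inclusions $\cP\subseteq\,^\perp(\cP^\perp)$ and $\,^\perp(\cP^\perp)\subseteq\cP$, and then to dualize. The first two sentences of the statement are Lemma \ref{lem: perps are pseudo torsion (free) classes}. The inclusion $\cP\subseteq\,^\perp(\cP^\perp)$ is immediate from Definition \ref{def: perp class}. So the content is the reverse inclusion, together with its dual $(\,^\perp\cQ)^\perp=\cQ$, which is needed for bijectivity and is proved by the mirror-image argument (exchanging cofibrations and fibrations, sums and intersections).

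For the reverse inclusion I would build, for each $M\in\cG$, a ``pseudo-torsion part'' $tM$: the largest strict subobject of $M$ lying in $\cP$. The key step, and the expected main obstacle, is showing that strict subobjects of $M$ lying in $\cP$ are closed under sums.

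So let $A,B\cof M$ with $A,B\in\cP$.
\begin{enumerate}
\item By Lemma \ref{lem: parallel inclusions}, $A\cap B\cof B$ is strict. Hence $(A+B)/A\cong B/(A\cap B)$ is a strict quotient of $B$, and so lies in $\cP$.
\item Applying the Main Lemma \ref{lemma A} to $A\cof M\onto M/A$ with the strict subobject $B\cof M$, the image $(A+B)/A$ is a strict subobject of $M/A$.
\item Arguing as in the step (b)$\Rightarrow$(c) of Theorem \ref{def/thm: subquotient}, $A+B\cof M$ is strict. By Lemma \ref{lem: parallel inclusions}, $A\cof A+B$ is also strict.
\item Thus $A\cof A+B\onto (A+B)/A$ is a strict extension of objects of $\cP$, so $A+B\in\cP$.
\end{enumerate}
Since modules have finite length and $0\cof M$ lies in $\cP$, there is a unique maximal strict subobject $T=tM\cof M$ with $T\in\cP$.

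Next I would show $M/T\in\cP^\perp$. Suppose $f:P'\to M/T$ is a nonzero strict morphism with $P'\in\cP$. Its image $X$ is a strict quotient of $P'$, so $X\in\cP$, and $X\cof M/T$ is strict. Let $T'\subseteq M$ be the preimage of $X$. The composition $M\onto M/T\onto (M/T)/X=M/T'$ is a fibration by Theorem \ref{thm: composition of fibrations}, so $T'\cof M$ is strict. By Lemma \ref{lem: parallel inclusions}, $T\cof T'\onto X$ is a strict extension, hence $T'\in\cP$. Since $T'\supsetneq T$, this contradicts the maximality of $T$.

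Finally, take $M\in\,^\perp(\cP^\perp)$. The fibration $M\onto M/T$ is a strict morphism into an object of $\cP^\perp$, so it is zero. Hence $M=T\in\cP$.

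Dually, for a pseudo-torsionfree class $\cQ$, I would take the smallest strict subobject $K\cof M$ with $M/K\in\cQ$. The key step there is closure of such subobjects under intersection, using Corollary \ref{cor: strict quotient of strict sequence} and the Main Lemma in place of the sum argument. Then $K\in\,^\perp\cQ$, which yields $(\,^\perp\cQ)^\perp=\cQ$. The two maps $\cP\mapsto\cP^\perp$ and $\cQ\mapsto\,^\perp\cQ$ are therefore mutually inverse.
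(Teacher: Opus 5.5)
Your proof is correct, but it takes a different route from the paper's.

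\textbf{The paper's route.} The paper argues by a minimal counterexample. If $X\in\,^\perp(\cP^\perp)$ but $X\notin\cP$, with $X$ of minimal length, then $X\notin\cP^\perp$, so there is a nonzero strict $f:Y\to X$ with $Y\in\cP$. Then $\im f\in\cP$, and $X/\im f\in\,^\perp(\cP^\perp)$ by Lemma \ref{lem: perps are pseudo torsion (free) classes}. Minimality gives $X/\im f\in\cP$, so $X\in\cP$ as the strict extension $\im f\cof X\onto X/\im f$. This is shorter. It uses only closure of $\,^\perp(\cP^\perp)$ under fibrations and never constructs anything.

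\textbf{Your route.} You build the torsion part $tM$ and show $M/tM\in\cP^\perp$. This is the existence half of Theorem \ref{thm: tM and fM}, which the paper proves only afterwards. There the paper asserts without justification that the preimage is a strict subobject; you supply that justification via Theorem \ref{thm: composition of fibrations}.

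\textbf{What your route buys.} You get the canonical strict sequence as a byproduct. You also prove the dual identity $(\,^\perp\cQ)^\perp=\cQ$. That identity is genuinely needed for the bijection, since it gives surjectivity of $\cP\mapsto\cP^\perp$, but the paper leaves it implicit.

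\textbf{A simplification.} Closure of strict $\cP$-subobjects under sums is not needed for this theorem, and neither is closure of the $K$'s under intersection in the dual. Your contradiction argument works verbatim for any inclusion-maximal strict subobject lying in $\cP$, or any inclusion-minimal $K$ in the dual, and these exist by finite length. The sum-closure only buys uniqueness of $tM$.
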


\begin{proof}
    Let $\cP\subset \cG \subset mod$-$\Lambda$ be a pseudo-torsion class in a torsion class in $mod$-$\Lambda$. By definition, $\cP \subset \,^\perp(\cP^\perp)$. 
    
    To show the other inclusion, let $X \in \,^\perp(\cP^\perp)$. Suppose by contradiction that $X\notin \cP$. Take $X$ to be minimal. Then $X \notin \cP^{\perp}$, so there exists $Y \in \cP$ and a nonzero strict morphism $Y \overset{f}{\rightarrow} X$. Consider the quotient $X/\im f$. Then we have a strict extension $\im f \cof X \twoheadrightarrow X/\im f$. Since $Y\in \cP$, its image $\im f\in\cP$. Since $\,^\perp(\cP^\perp)$ is a torsion class by Lemma \ref{lem: perps are pseudo torsion (free) classes}, we have that $X/\im f \in \,^\perp(\cP^\perp)$. Since $f$ is nonzero, $X/\im f$ is smaller than $X$. By induction on the size of $X$ we have that $X/\im f\in\cP$. Therefore, the strict extension $X$ of $\im f$ by $X/\im f$ lies in $\cP$ contradicting the assumption that $X\notin \cP$. Therefore, $\cP=\,^\perp(\cP^\perp)$ as claimed. 
\end{proof}

    We call $(\cP,\cP^\perp)$ a \textbf{pseudo-torsion pair}.

\begin{thm}\label{thm: tM and fM}
Given a pseudo-torsion pair $(\cP,\cQ)$ and $M\in \cG$, there exists a unique strict extension
\[
	tM\cof M\onto fM
\] 
where $tM\in \cP$ and $fM\in \cQ$. Furthermore, any strict morphism $M\to N$ gives unique strict morphisms $tM\to tN$ and $fM\to fN$ making the following diagram commute.
\[
\xymatrixrowsep{10pt}\xymatrixcolsep{10pt}
\xymatrix{
0\ar[r]& tM\ar[d]\ar[r] &
	M\ar[d]\ar[r] &
	fM\ar[d]\ar[r]
	& 0\\
0\ar[r] & tN \ar[r]& 
	N \ar[r]&
	fN\ar[r]&0 
	}
\]
\end{thm}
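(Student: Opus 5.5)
Given $M\in\cG$, I would take $tM$ to be a maximal strict subobject of $M$ lying in $\cP$. The first step is to show such a maximal one is unique. Suppose $T_1,T_2\cof M$ are both strict subobjects in $\cP$. By Lemma \ref{lem: parallel inclusions}, $T_1\cap T_2\cof T_2$ is strict. By the Main Lemma \ref{lemma A} applied to $T_2\cof M\onto M/T_1$, the image $(T_1+T_2)/T_1\cong T_2/(T_1\cap T_2)$ is a strict subobject of $M/T_1$; it is also a strict quotient of $T_2$, hence lies in $\cP$. Then $T_1\cof T_1+T_2\onto T_2/(T_1\cap T_2)$ is a strict extension, with $T_1\cof T_1+T_2$ strict by Lemma \ref{lem: parallel inclusions}. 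So $T_1+T_2\in\cP$. Moreover $T_1+T_2$ is the preimage of a strict subobject of $M/T_1$, so Theorem \ref{thm: composition of fibrations} applied to $M\onto M/T_1\onto (M/T_1)/((T_1+T_2)/T_1)$ shows $T_1+T_2\cof M$. By finite length, a unique maximal one $tM$ exists.

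Next I set $fM=M/tM$ and show $fM\in\cQ=\cP^\perp$. Let $g:Y\to fM$ be a nonzero strict morphism with $Y\in\cP$. Its image $I\cof fM$ is a strict quotient of $Y$, so $I\in\cP$. Its preimage $T\subseteq M$ is then a strict subobject of $M$, by Theorem \ref{thm: composition of fibrations} applied as above. Also $tM\cof T\onto I$ is a strict extension, so $T\in\cP$ and $T\supsetneq tM$, contradicting maximality.

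For uniqueness, suppose $T\cof M\onto F$ is any strict extension with $T\in\cP$ and $F\in\cQ$. Then $T\subseteq tM$. The map $tM\to F$ is the composite $tM\cof M\onto F$. I would show it is strict: its kernel is $T$ and its image is $tM/T\subseteq F$, which is strict in $F$ by the Main Lemma applied to $tM\cof M\onto F$. Since $tM\in\cP$ and $F\in\cQ$, this map is zero, so $tM=T$. This verification is the only delicate point.

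For functoriality, let $h:M\to N$ be strict. The composite $tM\cof M\to N\onto fN$ is strict by the composition theorem for strict morphisms, and it goes from $\cP$ to $\cQ$, so it is zero. Hence $h$ restricts to $tM\to tN$. This restriction is strict by Proposition \ref{prop: ker and coker in G}(1), since $tN=\ker(N\onto fN)$ and $tM\to N$ is strict. Similarly $M\to N\onto fN$ kills $tM$, so it factors through $fM=\coker(tM\cof M)$, and the induced map $fM\to fN$ is strict by Proposition \ref{prop: ker and coker in G}(2). Both induced maps are unique because $tN\to N$ is mono and $M\to fM$ is epi.

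The main obstacle I expect is the first step: checking carefully that $T_1+T_2$ is a strict subobject of $M$. The whole construction rests on it, and it requires correctly combining the Main Lemma with Theorem \ref{thm: composition of fibrations}.
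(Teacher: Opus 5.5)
Your proof is correct. Its core is the same as the paper's. You take a maximal strict subobject of $M$ lying in $\cP$ and show $M/tM\in\cP^\perp$ by pulling back the image of a nonzero strict map from $\cP$. Functoriality then comes from Proposition \ref{prop: ker and coker in G}.

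The difference is in how you get uniqueness. You first prove that $T_1+T_2$ is a strict subobject of $M$ whenever $T_1,T_2$ are. That part of your argument uses only strictness, not membership in $\cP$. It makes $tM$ the largest strict $\cP$-subobject, so you then need only one Hom-vanishing step.

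The paper does not use this lemma. Given two such sequences $tM\cof M\onto fM$ and $t'M\cof M\onto f'M$, the composites $t'M\cof M\onto fM$ and $tM\cof M\onto f'M$ are strict by the composition theorem for strict morphisms. Since they go from $\cP$ to $\cQ$, they are zero, which gives $t'M\subseteq tM$ and $tM\subseteq t'M$. In that route, the uniqueness of the maximal strict $\cP$-subobject is a consequence rather than an input.

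So the paper's argument is shorter. Yours yields an extra fact not stated in the paper: strict subobjects of an object are closed under sums.

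The step you flag as delicate, strictness of $tM\to F$, follows immediately from the same composition theorem, which you already use for functoriality. If you verify it by hand, note explicitly that the kernel $T$ is a strict subobject of $tM$, by Lemma \ref{lem: parallel inclusions}.
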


\begin{proof}
Let $tM\cof M$ be a maximal strict subobject of $M$ which lies in $\cP$. Then we claim that $M/tM\in \cP^\perp=\cQ$. To see this, suppose not. Then there is a nonzero strict morphism $P\to M/tM$. Its image $X$ is a strict subobject of $M/tM$ which lies in $\cP$ since $\cP$ is closed under strict quotients. But then, the inverse image $Y$ of $X$ in $M$ is a strict subobject of $M$ which has a strict filtration $tM\cof Y\onto X$. So, $Y\in\cP$ contradicting the maximality of $tM$.

To show uniqueness, suppose that $t'M\cof M\onto f'M$ is another strict filtration with $t'M\in \cP$ and $f'M\in \cQ$. Then $t'M\to M\to fM$ is a strict morphism which must be zero. So, $t'M\subseteq tM$. Similarly, $tM\subseteq t'M$. So, $t'M=tM$, proving uniqueness.

Similarly, given any strict morphism $f:M\to N$, the composition $tM\to M\to N\to fN$ must be zero. So, $f(tM)\subseteq tN$ and the induced map $tM\to tN$ is strict by Proposition \ref{prop: ker and coker in G}. Using Proposition \ref{prop: ker and coker in G} again, this induces a strict map $fM\to fN$ as claimed. 
\end{proof}

We also can specify the smallest pseudo-torsion class containing a set of objects. Let $\cX \subset \cG$ be a class of objects in the torsion class $\cG$. We denote by $Filt(\cX)$ the class of objects $M \in \cG$ such that $M$ admits a \textbf{strict $\cX$ filtration}, by which we mean a strict filtration $0 = M_0 \cof M_1 \cof \cdots \cof M_k=M$ such that each $M_i$ is a strict subobject of $M$ and each subquotient $M_i/M_{i-1}$ is a strict quotient of an object from $\cX$.

\begin{prop}\label{prop: filt X}
    The smallest pseudo-torsion class containing $\cX$ is $Filt(\cX)$.
\end{prop}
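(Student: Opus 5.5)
The proof has two parts. First, every pseudo-torsion class containing $\cX$ contains $Filt(\cX)$. Second, $Filt(\cX)$ is itself a pseudo-torsion class. It clearly contains $\cX$, using the trivial filtration $0\cof X$, and it is nonempty since $0\in Filt(\cX)$.

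For the first part, let $\cP\supseteq\cX$ be a pseudo-torsion class and let $0=M_0\cof\cdots\cof M_k=M$ be a strict $\cX$ filtration. Each $M_i/M_{i-1}$ is a strict quotient of an object of $\cX\subseteq\cP$, so it lies in $\cP$. By Lemma \ref{lem: parallel inclusions}, $M_{i-1}\cof M_i$ is strict, since $M_{i-1}$ is a strict subobject of $M$ contained in $M_i$. Hence $M_{i-1}\cof M_i\onto M_i/M_{i-1}$ is a strict extension. Induction on $i$ then gives $M_i\in\cP$, and so $M\in\cP$.

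For the second part I would check the two closure properties.

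\emph{Strict quotients.} Let $M\in Filt(\cX)$ with filtration $(M_i)$ and let $M\onto N=M/K$ be a fibration, so $K\cof M$. I would define $N_i=(M_i+K)/K$, the image of $M_i$ in $N$. The key point is that $M_i+K$ is a strict subobject of $M$. To see this, apply the Main Lemma \ref{lemma A} to the sequence $K\cof M\onto N$ with the strict subobject $M_i$: the image of $M_i$ in $N$, namely $N_i$, is a strict subobject of $N$. By Theorem \ref{thm: composition of fibrations}, its preimage $M_i+K$ is then strict in $M$. For the subquotients, $N_i/N_{i-1}\cong M_i/\bigl(M_i\cap(M_{i-1}+K)\bigr)$ is a quotient of $M_i/M_{i-1}$. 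Remark \ref{rem: inherited subquotients} and the Main Lemma applied inside $M_i$ show that the kernel $\bigl(M_i\cap(M_{i-1}+K)\bigr)/M_{i-1}$ is a strict subobject of $M_i/M_{i-1}$. So $N_i/N_{i-1}$ is a strict quotient of $M_i/M_{i-1}$, and by Theorem \ref{thm: composition of fibrations} it is therefore a strict quotient of an object of $\cX$. Equivalently, Corollary \ref{cor: strict quotient of strict sequence} can be applied step by step to the strict extensions $M_{i-1}\cof M_i\onto M_i/M_{i-1}$, via an induction on $k$.

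\emph{Strict extensions.} Let $A\cof E\onto C$ be a strict extension with $A,C\in Filt(\cX)$. I would concatenate the two filtrations: take the filtration $(A_i)$ of $A$ followed by the preimages $p^{-1}(C_j)$ of the filtration of $C$. Each $A_i$ is strict in $E$ by Theorem \ref{thm: composition of cofibrations}. Each $p^{-1}(C_j)$ is strict in $E$, because $E\onto C\onto C/C_j$ is a composition of fibrations (Theorem \ref{thm: composition of fibrations}). The subquotients are the $A_i/A_{i-1}$ together with $p^{-1}(C_j)/p^{-1}(C_{j-1})\cong C_j/C_{j-1}$, all of which are strict quotients of objects of $\cX$.

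I expect the main obstacle to be the strict-quotient step, specifically verifying that $M_i\cap(M_{i-1}+K)$ is strict in $M_i$ and that the induced subquotient map is a fibration. I would first try to reduce this to the main diagram \eqref{eq: main diagram}, taking $B=M_i$ with the strict subobject $M_i\cap K$ and the fibration $M_i\onto M_i/M_{i-1}$. Lemma \ref{lem: all maps strict} then gives that all maps in that diagram are strict, which yields the required fibration $M_i/M_{i-1}\onto N_i/N_{i-1}$.
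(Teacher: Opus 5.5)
Your proposal is correct and follows essentially the same route as the paper. The ingredients match: the image filtration $N_i=(M_i+K)/K$, checked with the main diagram on $M_i$ built from $M_i\cap K$ and $M_i\onto M_i/M_{i-1}$; concatenation of filtrations for strict extensions; and minimality because every object of $Filt(\cX)$ is an iterated strict extension of strict quotients of objects of $\cX$. Your direct check that each $N_i$ is strict in $N$ (the Main Lemma applied to $K\cof M\onto N$ with $M_i$) is a slightly cleaner substitute for the paper's backward induction.
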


\begin{proof}
    We begin by showing $Filt(\cX)$ is a pseudo-torsion class. Let $M\in Filt(\cX)$ and $0 = M_0 \cof M_1 \cof \cdots \cof M_n=M$ be a strict $\cX$ filtration of $M$. Suppose $f:M \twoheadrightarrow N$ is a fibration onto $N\in\cG$. Then we attain a filtration $0 = N_0 \subset N_1 \subset \cdots \subset N_n = N$ where $N_i = {M_i + \ker f \over \ker f}$. We will show this is a strict $\cX$ filtration of $N$ by backward induction. Since $\cG$ is closed under sums and quotients, we have that $N_i \in \cG$ for all $i$. Consider the short exact sequence $M_{n-1} \cof M \twoheadrightarrow M/M_{n-1}$. Since all modules in this sequence are in $\cG$ and $M\twoheadrightarrow N$ is a fibration, by Proposition \ref{prop: duality of strict ses}, we attain the following commuting diagram in which all vertical morphisms are fibrations:

    \[
\xymatrixrowsep{12pt}\xymatrixcolsep{12pt}
\xymatrix{
        0 \ar[r] & M_{n-1} \ar[r] \ar[d] & M \ar[r] \ar[d] & M/M_{n-1} \ar[r] \ar[d] & 0 \\ 0 \ar[r] & A \ar[r] & M/\ker f \ar[r] & C \ar[r] & 0
    }
    \]
\noindent
    where $A$ and $C$ are strict quotients of $M_{n-1}$ and $M/M_{n-1}$ respectively. It follows that $A$ and $C$ are uniquely determined and we can extend this to the main diagram (\ref{eq: main diagram}):

    \[
\xymatrixrowsep{12pt}\xymatrixcolsep{12pt}
\xymatrix{
& 0\ar[d] & 0\ar[d] & 0\ar[d]\\
0\ar[r]& \ker f \cap M_{n-1} \ar[d]\ar[r] &
	\ker f\ar[d]\ar[r] &
	{M_{n-1} + \ker f \over M_{n-1}} \ar[d]\ar[r]
	& 0\\
0\ar[r]& M_{n-1} \ar[d]\ar[r] &
	M \ar[d]\ar[r] &
	M/M_{n-1} \ar[d]\ar[r]
	& 0\\
0\ar[r]& {M_{n-1} + \ker f \over \ker f} \ar[d]\ar[r] &
	M/\ker f \ar[d]\ar[r] &
	{M \over M_{n-1} + \ker f}\ar[d]\ar[r]
	& 0\\
& 0 & 0 & 0	} 
\]

Since $M/M_{n-1}$ is a strict quotient of some $X \in Filt(\cX)$, $M/M_{n-1} \twoheadrightarrow {M \over M_{n-1} + \ker f}$ is a fibration, and composition of fibrations is strict, we have that ${M \over M_{n-1} + \ker f} \cong N/N_{n-1}$ is a strict quotient of $X$. Moreover, we have that $M_{n-1} \twoheadrightarrow {M_{n-1} + \ker f \over \ker f} \cong N_{n-1}$ is a fibration.

Now suppose that $N_{i+1}/N_{i}$ is a strict quotient of an object $X \in Filt(\cX)$ and that $M_{i} \twoheadrightarrow N_{i}$ is a fibration. Consider the short exact sequence $M_{i-1} \cof M_i \twoheadrightarrow M_i/M_{i-1}$. Using an analogous argument, we attain the following diagram:

 \[
\xymatrixrowsep{12pt}\xymatrixcolsep{12pt}
\xymatrix{
& 0\ar[d] & 0\ar[d] & 0\ar[d]\\
0\ar[r]& \ker f \cap M_{i-1} \ar[d]\ar[r] &
	\ker f\cap M_i \ar[d]\ar[r] &
	{\ker f \cap M_i \over M_{i-1}} \ar[d]\ar[r]
	& 0\\
0\ar[r]& M_{i-1} \ar[d]_{\pi_0} \ar[r] &
	M_i \ar[d]_{\pi_1} \ar[r] &
	M_i/M_{i-1} \ar[d] _{\pi_2} \ar[r]
	& 0\\
0\ar[r]& {M_{i-1} + \ker f \over \ker f} \ar[d]\ar[r] &
	{M_i + \ker f \over \ker f} \ar[d]\ar[r] &
	{M_i + \ker f \over M_{i-1} + \ker f}\ar[d]\ar[r]
	& 0\\
& 0 & 0 & 0	} 
\]
In particular, we have that $\pi_0, \pi_1,$ and $\pi_2$ are fibrations. It follows that ${M_i + \ker f \over M_{i-1} + \ker f} \cong N_i/N_{i-1}$ is a strict quotient of an object in $\cX$ and that $M_{i-1} \twoheadrightarrow {M_{i-1} + \ker f \over \ker f} \cong N_{i-1}/N_i$ is a fibration. By backward induction, we conclude $0 = N_0 \cof N_1 \cof \cdots \cof N_n = N$ is a strict $\cX$ filtration of $N$.

Now let $M,N \in Filt(\cX)$ and $M \cof E \overset{\pi}{\twoheadrightarrow} N$ be a strict short exact sequence. Then we attain a strict $\cX$ filtration $0 = M_0 \cof M_1 \cof \cdots \cof M = \pi^{-1}(N_0) \cof \pi^{-1}(N_1) \cof \cdots \cof \pi^{-1}(N) = E$ of $E$ where $0=M_0 \cof M_1 \cof \cdots \cof M$ and $0=N_0 \cof N_1 \cof \cdots \cof N$ are strict $\cX$ filtrations of $M$ and $N$ respectively. Thus $Filt(\cX)$ is closed under strict extensions and therefore is a pseudo-torsion class.

Note that each module in $Filt(\cX)$ is an iterated strict extension of strict quotients of objects from $\cX$. Since every pseudo-torsion class containing $\cX$ must contain all objects that are iterated strict extensions of strict quotients of objects of $\cX$, it follows that $Filt(\cX)$ is the smallest pseudo-torsion class containing $\cX$.
\end{proof}

\subsection{Pseudo-wide subcategories}\label{ss13: wide subcats}

\begin{defn}\label{def: pseudo-wide} By a \textbf{pseudo-wide subcategory} of $\cG$ we mean a nonempty full subcategory $\cW$ of the strict category $\cG$ (with only strict morphisms) having the following properties, keeping Proposition \ref{prop: ker and coker in G} in mind:
\begin{enumerate}
\item Given any strict morphism $f:A\to B$ where $A,B\in\cW$, the kernel, image, and cokernel of $f$ lie in $\cW$. In particular, $0\in \cW$.
\item $\cW$ is closed under strict extensions in $\cG$.
\end{enumerate}
\end{defn}

By Proposition \ref{prop: ker and coker in G} and the definition of strict morphisms, $\cG$ is a pseudo-wide subcategory of itself. Although pseudo-wide subcategories have many of the properties of abelian categories, Example \ref{eg: A3 example} shows that they may not be additive categories.

\subsection{Relative pseudo-torsion classes}\label{ss14: relative p-tor}

\begin{defn}\label{def: relative ptor}
Let $\cX$ be pseudo-wide subcategory of $\cG$. Then a \textbf{relative pseudo-torsion class} in $\cX$ is a subset $\cP_0$ of $\cX$ which is closed under strict extensions and under strict quotients which lie in $\cX$. Thus, if $P\onto X$ is a strict epimorphism with $P\in\cP_0$ and $X\in \cX$ then $X\in\cP_0$. \textbf{Relative pseudo-torsionfree classes} in $\cX$ are defined analogously. 
\end{defn}

The \textbf{relative perpendicular classes} are defined to be $\cP_0^\perp\cap \cX$ and $\,^\perp \cQ_0\cap \cX$. Thus, for instance, the relative right perpendicular class of $\cP_0$ is the class of all $X\in \cX$ so that there are no nonzero strict morphisms from any $P\in\cP_0$ to $X$.

\begin{thm}\label{thm: rel p-tor pairs}
    Taking relative perpendicular classes gives a bijection, i.e., the left relative perpendicular class of the right relative perpendicular class of $\cP_0$ is the original class $\cP_0$ and similarly starting with $\cQ_0$. Furthermore, every object of $\cX$ has a canonical strict subobject in $\cP_0$ with quotient in $\cP_0^\perp\cap \cX$.
\end{thm}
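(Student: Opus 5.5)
We will mimic the proofs of Theorem \ref{thm: duality between P,Q} and Theorem \ref{thm: tM and fM}, checking at each step that every object we form stays inside $\cX$. The key fact is that $\cX$ is pseudo-wide. So for a strict morphism between objects of $\cX$, its kernel, image and cokernel lie in $\cX$. Moreover, a strict extension of objects of $\cX$ lies in $\cX$.

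We first show that relative perpendicular classes are relative pseudo-torsion(free) classes. This follows the proof of Lemma \ref{lem: perps are pseudo torsion (free) classes}, using that compositions of strict morphisms are strict and Proposition \ref{prop: ker and coker in G}. Let $\cQ_0=\cP_0^\perp\cap\cX$ and $\cP_1={}^\perp\cQ_0\cap\cX$. Then $\cP_0\subseteq\cP_1$ is clear.

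For the reverse inclusion we argue by induction on length. Suppose $X\in\cP_1$ is of minimal length with $X\notin\cP_0$. Then $X\notin\cQ_0$, since $X\neq 0$ and the identity of $X$ is a nonzero strict morphism, so $X\notin{}^\perp X$. Hence there is $Y\in\cP_0$ and a nonzero strict morphism $f:Y\to X$. Since $Y,X\in\cX$, the image $\im f$ lies in $\cX$. Also $Y\onto \im f$ is a fibration, so $\im f\in\cP_0$ by closure under strict quotients lying in $\cX$. The cokernel $X/\im f$ lies in $\cX$, and $X\onto X/\im f$ is a fibration. So $X/\im f\in\cP_1$, because $\cP_1$ is closed under strict quotients in $\cX$. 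By minimality, $X/\im f\in\cP_0$. Then $\im f\cof X\onto X/\im f$ is a strict extension of objects of $\cP_0$, so $X\in\cP_0$, a contradiction. The statement starting from $\cQ_0$ is dual, using kernels in place of cokernels.

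For the canonical sequence, take $M\in\cX$ and let $tM$ be a maximal strict subobject of $M$ that lies in $\cP_0$. Such objects exist since $0\in\cP_0$. We then need two things.
\begin{enumerate}
\item $M/tM\in\cX$. This holds because $tM\cof M$ is a strict morphism between objects of $\cX$, so its cokernel lies in $\cX$.
\item $M/tM\in\cQ_0$. Suppose instead there is a nonzero strict map $P\to M/tM$ with $P\in\cP_0$. Its image $Z$ lies in $\cX$, being the image of a strict morphism between objects of $\cX$. Then $Z\in\cP_0$.
\end{enumerate}
Let $Y\subseteq M$ be the preimage of $Z$. We must show that $Y$ is a strict subobject of $M$ and that $tM\cof Y\onto Z$ is strict. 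We use the composition of fibrations $M\onto M/tM\onto (M/tM)/Z$, as in the proof of Theorem \ref{def/thm: subquotient}(b)$\Rightarrow$(c). This gives $Y\cof M$, and Lemma \ref{lem: parallel inclusions} gives $tM\cof Y$. Then $Y\in\cX$ as a strict extension of objects of $\cX$, and $Y\in\cP_0$ by closure under strict extension. This contradicts the maximality of $tM$. Uniqueness and functoriality follow as in Theorem \ref{thm: tM and fM}.

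The main obstacle is the bookkeeping that every intermediate object lies in $\cX$. This matters because $\cP_0$ is only closed under strict quotients that lie in $\cX$. The pseudo-wide axioms (kernel, image and cokernel of strict maps, and strict extensions) are what supply this at each step.
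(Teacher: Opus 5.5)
Your proposal is correct and takes the route the paper intends. The paper gives no separate argument and only says the proof is very similar to those of Theorems \ref{thm: duality between P,Q} and \ref{thm: tM and fM}; you carry out that adaptation, using the pseudo-wide axioms to keep $\im f$, $\coker f$, $\ker g$, $M/tM$ and the preimage $Y$ inside $\cX$. You also justify, via composition of fibrations, that the preimage $Y$ is a strict subobject of $M$, which the paper's absolute proof only asserts.
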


The proof is very similar to the proof of the absolute cases: Theorems \ref{thm: duality between P,Q} and \ref{thm: tM and fM}. We will not need these in Part I (the present paper).

\section{Wall and chamber structure}\label{new sec2}

Recall that $\Lambda$ is a finite dimensional algebra over a field of rank $n$. Up to isomorphism, there are $n$ indecomposable projective modules $P_1,\cdots,P_n$ whose tops $S_i=P_i/rad\,P_i$ are the simple $\Lambda$-modules. Let $F_i$ denote the division algebra
\[
F_i:=\End_\Lambda(P_i)/rad\End_\Lambda(P_i)
\]
For any finitely generated right $\Lambda$-module $M$ let $\undim M$ denote its \textbf{dimension vector} 
\[
	\undim M=(d_1,\cdots,d_n)
\]
where $d_i$ is the dimension of $\Hom_\Lambda(P_i,M)$ over $F_i$. This is also the number of times $S_i$ occurs in the composition series of $M$. Then
\[
	\undim M\in K_0\Lambda\cong \ZZ^n.
\]
\begin{defn}\label{def: stability space}
We define the \textbf{stability space} of $\Lambda$ to be
\[
	V_\Lambda:=\Hom_\ZZ(K_0\Lambda,\RR)\cong \RR^n.
\]
Elements of $V_\Lambda$ are called \textbf{stability conditions}. For any stability condition $\theta\in V_\Lambda$ and $M\in mod\text-\Lambda$, let $\theta(M)$ denote
\[
	\theta(M):=\theta(\undim M)\in\RR.
\]
\end{defn}

Basic examples of pseudo-torsion classes, pseudo-torsionfree classes and pseudo-wide subcategories are given by ``stability conditions''. We could also take the ``reduced'' stability space of $\cG$ which is
\[
    V_\cG:=\Hom(\overline K_0\cG,\RR).
\]
This would be more natural and, in general, will give more pseudo-torsion classes for $\cG$. See Section \ref{new sec4} for explanations and examples. However, we study in detail stability diagrams for $\cG$ in the standard stability space $V_\Lambda$ since this will be more familiar to the readers. Also, we will be able to compare the stability diagrams of $\cG$ and $mod\text-\Lambda$ as subsets of the same space $V_\Lambda$. (On the other hand, $\cG\subseteq mod\text-\Lambda$, so $V_\Lambda\subseteq V_\cG$.)

\begin{defn}\label{def: extended dim M}
    For any object $M\in\cG$, we define the \textbf{extended dimension vector} of $M$ to be the collection of all dimension vectors of all strict subquotients of $M$. More precisely, it is the set of all ordered triples $([A],[B],[C])$ of elements of $K_0\Lambda=\ZZ^n$ where $(A,B,C)$ gives the subquotients of a strict filtration of $M$:
    \[
    M_1\cof M_2\cof M
    \]
    with $A=M_1$, $B=M_2/M_1$, $C=M/M_2$.
\end{defn}

Note that, although there may be infinitely many $M\in\cG$ and infinitely many strict filtrations of each $M$, there are only finitely many extended dimension vectors of bounded total length. Consequently, the set of extended dimension vectors of all objects of $\cG$ is only countably infinite.

\subsection{Walls thick and thin}\label{ss21: walls}

{Let $\cW(\cG)$ be the set of all $\theta\in V_\Lambda$ so that $\cW(\theta)\neq0$. We will see that this is the union of ``walls'' $D_\cG(X)$. The chambers will be the path components of the complement of $\cW(\cG)$ in $V_\Lambda$. We also need ``infinitesmal chambers'' which arise when a wall is a limit of other walls. In general, each wall will have two infinitesmal chambers, one on each side of the wall. We will show that $\cP(\theta)$ is constant on each chamber and distinct for distinct chambers. The same holds for $\cQ(\theta)$ since $\cP(\theta)$ and $\cQ(\theta)$ determine each other by Theorem \ref{thm: duality between P(theta) and Q(theta)}.

\color{black}
\begin{defn}\label{def: pseudo-wall}%
Given an object $M$ in a torsion class $\cG$ we define the \textbf{pseudo-wall} $D_\cG(M)$ to be the set of all stability conditions $\theta$ so that $\theta(M)=0$ and $\theta(M')\le0$ for all strict subobjects $M'$ of $M$. \commentout{{\color{orange} why consider all $\theta$ for the whole algebra? Is there a way to define the stability space for the strict category $\cG$?}{\color{blue} yes. we could define $V_\cG$ which would be the stability diagram for $\cG$, not the stability diagram. It is just that this is what people are used to. Also, the ghosts are walls in the stability diagram of $\Lambda$. Maybe we can put an example in Section 4 where we consider $\cG$ as wide subcategory.}} Equivalently, $\theta(M'')\ge0$ for all strict quotients $M''$ of $M$. The \textbf{interior} $int\,D_\cG(M)$ is the set of all $\theta\in D_\cG(M)$ so that $\theta(M')<0$ for all nonzero proper strict subobjects $M'\subset M$. The \textbf{boundary} of $D_\cG(M)$ is the complement of its interior:
\[
	\partial D_\cG(M):=D_\cG(M)-int\,D_\cG(M).
\]

For any $\theta\in V_\Lambda$, $\cW(\theta)$ is defined to be the class of all objects $X$ in $\cG$ so that $\theta \in D_{\cG}(X)$. That is, the class of all $X$ in $\cG$ such that $\theta(X)=0$ and $\theta(X')\le 0$ for all strict subobjects $X'$ of $X$. Equivalently, $\theta(X'')\ge0$ for all strict quotients $X''$ of $X$.
\end{defn}
}
By its definition, $D_\cG(M)$ is a closed convex subset of the stability space $V_\Lambda$. We also note that the set $D_\cG(M)\subset V_\Lambda$ depends only on the extended dimension vector of $M$ (Def. \ref{def: extended dim M}). So, there are only countably many pseudo-walls.

There is a big distinction between what we call ``thin'' wall and ``thick'' walls.

\begin{defn}\label{def: thin walls}
    The \textbf{thin walls} are the path components of the set of all $\theta\in\cW(\cG)$ so that $\cW(\theta)$ is generated by one object $B$. Thus, the objects of $\cW(\theta)$ will be the iterated strict self-extensions of $B$. Points on the rest of $\cW(\cG)$ will be called \textbf{thick}. The \textbf{thick walls} are $D_\cG(M)$ for $M \in \cG$ minus the thin walls.
\end{defn}

\begin{prop}\label{prop: W(theta) constant on thin wall}
    For any path $\theta_t$, $t\in[0,1]$ in a thin wall, $\cW(\theta_t)$ is constant, i.e., the generator $B_t$ of $\cW(\theta_t)$ is the same for all $0\le t\le1$.
\end{prop}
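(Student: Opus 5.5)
The plan is a connectedness argument on $[0,1]$: I will show that the function $t\mapsto B_t$, with values in isomorphism classes of objects of $\cG$, is locally constant. Since $[0,1]$ is connected, it will then be constant. Throughout, $B_t$ is the generator of $\cW(\theta_t)$; it is the unique nonzero object of $\cW(\theta_t)$ of minimal length, all other objects being iterated strict self-extensions of it. The argument splits into two halves, an openness statement and a closedness statement, and both rest on two facts already available. First, each $D_\cG(M)$ is closed. Second, $D_\cG(M)$ depends only on the extended dimension vector of $M$, and there are only finitely many extended dimension vectors of bounded length.

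The first half is closedness. I fix $t_0$ and let $t_k\to t_0$ be a sequence with $B_{t_k}=B$ for all $k$. Since $D_\cG(B)$ is closed, $\theta_{t_0}\in D_\cG(B)$, so $B\in\cW(\theta_{t_0})$. Because $\theta_{t_0}$ lies on a thin wall, $B$ is an iterated strict self-extension of $B_{t_0}$. The remaining task is to show $B=B_{t_0}$, i.e.\ that $B$ is not a proper iterated self-extension.

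Suppose instead that $B$ is a proper iterated self-extension. Then it has a nonzero proper strict subobject $B'$ isomorphic to a copy of $B_{t_0}$ in a strict filtration, whose quotient is again an iterated self-extension of $B_{t_0}$. Since $\theta_{t_0}(B_{t_0})=0$, I would like to conclude $\theta_{t_k}(B')\to 0$, and then that $B'\in\cW(\theta_{t_k})$ for $k$ large. That would contradict the minimality of $B=B_{t_k}$ as the generator of $\cW(\theta_{t_k})$.

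This step is not automatic, because $B'\in D_\cG$ requires the exact equality $\theta(B')=0$, not merely an approximate one. The fix I would try first is the following. Since $B\in\cW(\theta_{t_k})$, we have $\theta_{t_k}(B')\le 0$ and $\theta_{t_k}(B/B')\ge 0$ (the latter because $B/B'$ is a strict quotient), and these two values sum to $\theta_{t_k}(B)=0$. Hence $\theta_{t_k}(B')=0$ on the nose. Next I check the sub-object condition: any strict subobject of $B'$ is a strict subobject of $B$ by Theorem \ref{thm: composition of cofibrations}, so it has $\theta_{t_k}\le 0$. Therefore $B'\in D_\cG(B')$ at $\theta_{t_k}$, i.e.\ $B'\in\cW(\theta_{t_k})$, and $B'$ is strictly shorter than $B$. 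This contradicts minimality, so in fact $B=B_{t_0}$.

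The second half is openness, i.e.\ local constancy near $t_0$. I argue by contradiction. If $B_t$ is not locally constant at $t_0$, pick $t_k\to t_0$ with $B_{t_k}\ne B_{t_0}$. I need to know that only finitely many extended dimension vectors occur among the $B_{t_k}$, so that, after passing to a subsequence, all the walls $D_\cG(B_{t_k})$ are the same set $D$. This requires a length bound on $B_{t_k}$, and obtaining it is the main obstacle.

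My first attempt at the length bound is as follows. The extension-closure argument of the previous paragraph works for any object in $\cW(\theta)$ that is a limit of objects in walls, so it should reduce to showing that $D_\cG(B_{t_0})$ contains a neighborhood of $\theta_{t_0}$ in the wall, using interior points. If that fails, I would instead use the definition of thin walls as path components on which the generator is a single object, which directly encodes local constancy.

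With a subsequence having all walls $D_\cG(B_{t_k})$ equal to a common $D$, the closedness half shows that the common object, up to its extended dimension vector, equals $B_{t_0}$. That contradicts the choice $B_{t_k}\ne B_{t_0}$ and finishes the proof.
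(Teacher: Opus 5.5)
Your openness half has a genuine gap, and it is exactly where the content of the proposition lies. You have no a priori length bound on the generators $B_{t_k}$ for $t_k\to t_0$. Nothing in your argument rules out walls $D_\cG(B)$, with $B$ of unbounded length, meeting the path at times accumulating at $t_0$. So you cannot pass to a subsequence with constant extended dimension vector.

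Neither fallback supplies the bound.
\begin{itemize}
\item The second is circular. A thin wall is a path component of the set of $\theta$ at which $\cW(\theta)$ is generated by \emph{some} single object. The definition says nothing about that object being the same at different points of the component, and that is precisely the statement to be proved.
\item The first misses the real issue. Since $B_{t_0}$ has no nonzero proper strict subobject in $\cW(\theta_{t_0})$, we have $\theta_{t_0}\in int\,D_\cG(B_{t_0})$. This set is cut out by one equation and finitely many strict inequalities. So $D_\cG(B_{t_0})$ automatically contains a neighborhood of $\theta_{t_0}$ in the hyperplane $\theta(B_{t_0})=0$. What is unknown is whether $\theta_t$ stays in that hyperplane for $t$ near $t_0$.
\end{itemize}
Any local analysis just reproduces the original problem. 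A small interval around $t_0$ is still partitioned into the countably many sets $\{t: B_t\cong B\}$, possibly with more than one of them nonempty.

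The missing idea is that openness is not needed at all. For a generator $B=B_s$, let $T_B=\{t\in[0,1]: B_t\cong B\}$. The argument of your closedness half, with $s$ in place of $t_k$ and no limit needed, shows that $\theta_t\in D_\cG(B)$ forces $B_t\cong B$. Hence $T_B$ is the preimage of the closed set $D_\cG(B)$, so it is closed. These sets are pairwise disjoint and cover $[0,1]$. There are only countably many of them, because a generator is determined up to isomorphism by its extended dimension vector.

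With finitely many pieces, connectedness would finish, which is what your length bound was trying to achieve. With countably many, you need Sierpi\'nski's theorem \cite{Sierpinski}: $[0,1]$ is not a disjoint union of countably many, at least two, nonempty closed sets. That is the paper's proof.

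Separately, one step of your closedness half is wrong as written. From $\theta_{t_k}(B')\le0$, $\theta_{t_k}(B/B')\ge0$ and $\theta_{t_k}(B')+\theta_{t_k}(B/B')=0$ you cannot conclude $\theta_{t_k}(B')=0$. The correct reason is that every subquotient of the filtration of $B$ is isomorphic to $B_{t_0}$. So $[B]=m[B_{t_0}]$ in $K_0\Lambda$, and $\theta_{t_k}(B')=\theta_{t_k}(B)/m=0$. After that repair the rest of that argument goes through, since strict subobjects of $B'$ are strict subobjects of $B$ by Theorem \ref{thm: composition of cofibrations}.
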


\begin{proof} Whether an object $M$ lies in $\cW(\theta_t)$ depends only on the extended dimension vector of $M$. So, if this lies on a thin wall, the generator $B_t$ is uniquely determined by its extended dimension vector. (Any other object $B$ with the same extended dimension vector as $B_t$ must lie in $\cW(\theta_t)$ and therefore must be isomorphic to $B_t$.)
    The set of all $t\in[0,1]$ with a fixed generator $B$ is a closed set since $D_\cG(B)$ is a closed set. If the generator $B_t$ of $\cW(\theta_t)$ is not a constant function of $t$, we would then get the compact connected set $[0,1]$ being a disjoint union of countably many closed subsets which is not possible by \cite{Sierpinski}.
\end{proof}

{\begin{prop}\label{prop: dD in int}%
Every $\theta\in \partial D_\cG(M)$ lies in $int\,D_\cG(\overline M)$ for some strict quotient $\overline M$ of $M$. $\theta$ also lies in $int\,D_\cG(M')$ for some strict subobject $M'$ of $M$.
\end{prop}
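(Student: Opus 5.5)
The idea is to take an extremal strict subobject on which $\theta$ vanishes. Fix $\theta\in\partial D_\cG(M)$, so $\theta(M)=0$ and $\theta(X)\le 0$ for every strict subobject $X$ of $M$. Since $\theta$ is on the boundary, there is a nonzero proper strict subobject $X\cof M$ with $\theta(X)=0$.

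For the quotient statement, I choose a strict subobject $A\cof M$ with $\theta(A)=0$, $A\neq M$, and $A$ maximal among such (by dimension). I set $\overline M=M/A$, which is a strict quotient of $M$ with $\theta(\overline M)=0$. First I check that $\theta\in D_\cG(\overline M)$. A strict subobject of $\overline M$ has the form $B/A$ with $A\subseteq B\subseteq M$. By Theorem \ref{def/thm: subquotient} (b)$\Rightarrow$(c), $B$ is a strict subobject of $M$. Hence $\theta(B/A)=\theta(B)-\theta(A)=\theta(B)\le 0$.

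Next I check the interior condition. Suppose $B/A$ is a nonzero proper strict subobject of $\overline M$ with $\theta(B/A)=0$. Then $B$ is a strict subobject of $M$ with $A\subsetneq B\subsetneq M$ and $\theta(B)=0$. This contradicts the maximality of $A$. So $\theta(B/A)<0$ for all such subobjects, and $\theta\in int\,D_\cG(\overline M)$. Since $A$ may be $0$ only when $\theta$ is already interior, in our case $A\neq 0$ and $\overline M$ is a proper quotient.

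For the subobject statement, I argue dually. I choose a nonzero strict subobject $M'\cof M$ with $\theta(M')=0$ and $M'$ minimal among such; such an $M'$ exists because a proper one exists. Strict subobjects of $M'$ are strict subobjects of $M$ by Theorem \ref{thm: composition of cofibrations}, so $\theta$ is $\le 0$ on them and $\theta\in D_\cG(M')$. If $X\subsetneq M'$ were a nonzero strict subobject with $\theta(X)=0$, then $X\cof M$ by the same theorem, and this contradicts minimality. Hence $\theta\in int\,D_\cG(M')$.

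The only delicate step is identifying the strict subobjects of $M/A$ with the strict subobjects $B$ of $M$ containing $A$, in both directions. The direction I need, that $B/A\cof M/A$ implies $B\cof M$, is exactly the (b)$\Rightarrow$(c) step in the proof of Theorem \ref{def/thm: subquotient}, via composition of fibrations (Theorem \ref{thm: composition of fibrations}). Everything else is additivity of $\theta$ on short exact sequences.
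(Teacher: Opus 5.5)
Your proof is correct and takes essentially the paper's approach. For the subobject claim the paper also takes a minimal nonzero strict subobject with $\theta=0$. It handles the quotient claim ``by transitivity of strict quotients,'' and your choice of a maximal proper strict kernel $A$ (equivalently, a minimal nonzero strict quotient $M/A$), combined with the (b)$\Rightarrow$(c) step, which is composition of fibrations, does exactly that. Incidentally, you state the interior inequality correctly as $\theta(X)<0$ on nonzero proper strict subobjects of $M'$, whereas the paper's text has the sign slip ``$\theta(M'')>0$.''
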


\begin{proof}%
Suppose $\theta\in \partial D_\cG(M)$. Then $\theta(M)=0$ and $\theta(M')=0$ for some strict subobject of $M$. Take $M'$ minimal. Then $\theta(M'')>0$ for every strict subobject $M''$ of $M'$. Therefore, $\theta\in int\,D_\cG(M')$. The corresponding statement holds for strict quotients of $M$ by transitivity of strict quotients.
\end{proof}

\begin{cor}\label{cor: D in int}%
The union of all the walls $D_\cG(M)$ is equal to the union of all their interiors $int\,D_\cG(M)$: $$\bigcup_{M\in\cG} D_{\cG}(M) = \bigcup_{M\in\cG} int D_{\cG}(M).$$
\end{cor}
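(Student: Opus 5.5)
The inclusion $\bigcup_M int\,D_\cG(M)\subseteq\bigcup_M D_\cG(M)$ is immediate, since $int\,D_\cG(M)\subseteq D_\cG(M)$ by definition. So the plan is to prove the reverse inclusion. Let $\theta\in D_\cG(M)$ for some $M\in\cG$. If $\theta\in int\,D_\cG(M)$ we are done. Otherwise $\theta\in\partial D_\cG(M)$. Proposition \ref{prop: dD in int} then gives a strict subobject $M'$ of $M$ with $\theta\in int\,D_\cG(M')$, and so $\theta$ lies in the union of interiors.

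Because the corollary rests entirely on Proposition \ref{prop: dD in int}, I would check the step there that carries the weight. Take $\theta\in\partial D_\cG(M)$. Then $\theta(M)=0$, and there is a nonzero proper strict subobject $M_1\subset M$ with $\theta(M_1)=0$.

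Among all nonzero strict subobjects of $M$ on which $\theta$ vanishes, choose $M'$ of minimal length. This is possible because lengths are finite and $M_1$ is one such subobject. We need $\theta\in int\,D_\cG(M')$, and there are two things to verify.

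First, $\theta(X)\le 0$ for every strict subobject $X$ of $M'$. By Theorem \ref{thm: composition of cofibrations}, $X$ is a strict subobject of $M$, so $\theta(X)\le0$ because $\theta\in D_\cG(M)$.

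Second, the inequality is strict when $X$ is nonzero and proper in $M'$. Again $X\cof M$ by Theorem \ref{thm: composition of cofibrations}. If $\theta(X)=0$, this would contradict the minimality of $M'$.

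So the sign condition needed is $\theta(X)<0$. The proposition's proof writes ``$>0$'', but that is only a typo for the sub-object version of the convention. The only point needing care is that strictness is transitive, which is exactly Theorem \ref{thm: composition of cofibrations}. Nothing else is required, and the corollary follows.
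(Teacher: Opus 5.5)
Your proof is correct and follows the paper's route exactly: deduce the corollary from Proposition \ref{prop: dD in int}, proved by taking a minimal nonzero strict subobject $M'$ of $M$ with $\theta(M')=0$ and using transitivity of cofibrations (Theorem \ref{thm: composition of cofibrations}). You are also right that the inequality in the paper's proof of that proposition should read $\theta(M'')<0$ for nonzero proper strict subobjects $M''$ of $M'$.
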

}

{%
A nonzero object $B\in \cG$ is called a \textbf{pseudo-brick} if every nonzero strict endomorphism $f:B\to B$ is an isomorphism. Note that, if $M$ is not a brick and $f:M\to M$ is a nonzero strict endomorphism with kernel $K$, image $I$ and cokernel $C$ then, for any $\theta\in D_\cG(M)$ we have $\theta(K)=\theta(I)=\theta(C)=0$. So, the interior of $D_\cG(M)$ would be empty. This shows the following.

\begin{cor}\label{cor: only need bricks}%
The union of the walls $D_\cG(M)$ is equal to the union of all interiors $int\,D_\cG(B)$ for all pseudo-bricks $B$: $$\bigcup_{M\in\cG} D_{\cG}(M) = \bigcup_{\text{bricks }B\in\cG} int D_{\cG}(B).$$
\end{cor}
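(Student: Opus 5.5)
By Corollary \ref{cor: D in int}, the union of all walls $D_\cG(M)$ already equals the union of the interiors $int\,D_\cG(M)$ over all $M\in\cG$. It therefore suffices to show that $int\,D_\cG(M)=\emptyset$ whenever $M$ is not a pseudo-brick. Then the only interiors contributing to the union are those of pseudo-bricks. The reverse inclusion is trivial, since $int\,D_\cG(B)\subseteq D_\cG(B)$.

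So let $M\neq 0$ fail to be a pseudo-brick, and choose a nonzero strict endomorphism $f:M\to M$ which is not an isomorphism. Write $K=\ker f$, $I=\im f$ and $C=\coker f$. By Definition \ref{def: strict morphism}, $K\cof M$ and $I\cof M$ are strict subobjects, $M\onto I$ is a fibration, and $M\onto C$ is a fibration, being the strict quotient by the strict subobject $I$. Since $f\ne 0$ we have $I\neq 0$. Since $f$ is not an isomorphism, it is not injective (an injective endomorphism of a finite length module is bijective). Hence $K\ne 0$, and by length $I\ne M$.

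Now suppose $\theta\in D_\cG(M)$. Because $K$ and $I$ are strict subobjects, $\theta(K)\le 0$ and $\theta(I)\le 0$. On the other hand, $I\cong M/K$ is a strict quotient of $M$, so the equivalent formulation of the wall condition in Definition \ref{def: pseudo-wall} gives $\theta(I)\ge 0$. Hence $\theta(I)=0$, and then $\theta(K)=\theta(M)-\theta(I)=0$.

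But $I$ is a nonzero proper strict subobject of $M$ with $\theta(I)=0$, so $\theta\notin int\,D_\cG(M)$. Thus $int\,D_\cG(M)=\emptyset$, which proves the claim.

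The only point needing care is the equivalence, used above, between the two descriptions of the wall: "$\theta(M')\le 0$ for all strict subobjects" and "$\theta(M'')\ge0$ for all strict quotients". This holds because $\theta$ is additive on short exact sequences and strict quotients correspond exactly to strict subobjects, by Definition \ref{def: strict quotients}. Everything else is immediate.
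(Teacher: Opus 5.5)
Your proof is correct and follows the paper's argument: you reduce via Corollary~\ref{cor: D in int} to showing $int\,D_\cG(M)=\emptyset$ for non-pseudo-bricks $M$, using that a nonzero non-invertible strict endomorphism forces $\theta$ to vanish on its kernel and image (the paper also mentions the cokernel). You additionally spell out why the image is a nonzero proper strict subobject, which the paper leaves implicit.
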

}

\subsection{Pseudo-torsion classes given by stability conditions}\label{ss22: chambers}

We are now ready to show how {pseudo-torsion classes and pseuo-torsionfree classes} arise from stability conditions.

{%

\begin{defn}\label{def: P,Q,W}
For any $\theta\in V_\Lambda$ we define $\cP(\theta),\overline\cP(\theta),\cQ(\theta),\overline\cQ(\theta)$ as follows.
\begin{enumerate}
    \item $\cP(\theta)$ is the class consisting of $0$ and all nonzero objects $M$ in $\cG$ so that $\theta(M)>0$ and $\theta(M'')>0$ for all nonzero strict quotients $M''$ of $M$.
   \item $\overline\cP(\theta)$ is the class of all objects $M$ in $\cG$ so that $\theta(M)\ge0$ and $\theta(M'')\ge0$ for all strict quotients $M''$ of $M$.
    \item $\cQ(\theta)$ is the class consisting of $0$ and all nonzero objects $N$ in $\cG$ so that $\theta(N)<0$ and $\theta(N')<0$ for all nonzero strict subobjects $N'$ of $N$.
    \item $\overline\cQ(\theta)$ is the class of all objects $N$ in $\cG$ so that $\theta(N)\le0$ and $\theta(N')\le0$ for all strict subobjects $N'$ of $N$.
\end{enumerate}
Note that $0$ is an object in each of these classes.
\end{defn}
}

{
\begin{lem}\label{lem: Pbar=P when W=0}
    When $\cW(\theta)=0$ \commentout{{\color{orange} (no modules are $\theta$ semi-stable?) \color{blue} Yes!}} we have $\overline \cP(\theta)=\cP(\theta)$ and $\overline\cQ(\theta)=\cQ(\theta)$. When $\cW(\theta)$ is nonzero, we have strict inclusions $ \cP(\theta)\subsetneq \overline\cP(\theta)$ and $ \cQ(\theta)\subsetneq \overline\cQ(\theta)$.
\end{lem}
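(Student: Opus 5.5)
Since the inclusions $\cP(\theta)\subseteq\overline\cP(\theta)$ and $\cQ(\theta)\subseteq\overline\cQ(\theta)$ are immediate from Definition \ref{def: P,Q,W} (strict inequalities imply weak ones, and $0$ lies in both classes), the plan is to study the difference $\overline\cP(\theta)\setminus\cP(\theta)$. I will show it is nonempty exactly when $\cW(\theta)\neq0$, and argue the same way for $\cQ$.

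\emph{Step 1: $\cW(\theta)=0$ implies equality.} Suppose $M\neq0$ lies in $\overline\cP(\theta)$ but not in $\cP(\theta)$. Then some nonzero strict quotient $M''$ of $M$, possibly $M''=M$, has $\theta(M'')=0$. Choose such an $M''$ of minimal length. Every strict quotient of $M''$ is a strict quotient of $M$ by Theorem \ref{thm: composition of fibrations}, so $\theta(M''')\ge0$ for all strict quotients $M'''$ of $M''$. Together with $\theta(M'')=0$, this gives $M''\in D_\cG(M'')$ by the equivalent quotient formulation in Definition \ref{def: pseudo-wall}. Hence $0\neq M''\in\cW(\theta)$, which is a contradiction. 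Minimality is not even needed here. The dual argument for $\overline\cQ(\theta)$ uses strict subobjects of $N$ and Theorem \ref{thm: composition of cofibrations}: a nonzero strict subobject $N'$ with $\theta(N')=0$ has all its strict subobjects $N''$ satisfying $\theta(N'')\le0$, so $N'\in\cW(\theta)$.

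\emph{Step 2: $\cW(\theta)\neq0$ implies strict inclusion.} Take a nonzero $X\in\cW(\theta)$. Then $\theta(X)=0$ and $\theta(X'')\ge0$ for all strict quotients $X''$ of $X$, so $X\in\overline\cP(\theta)$. But $X$ is nonzero with $\theta(X)=0$, so $X\notin\cP(\theta)$. Symmetrically, the condition $\theta(X')\le0$ for all strict subobjects $X'$ gives $X\in\overline\cQ(\theta)\setminus\cQ(\theta)$.

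The only point that needs care is the equivalence in Definition \ref{def: pseudo-wall} between "$\theta(X')\le0$ for all strict subobjects" and "$\theta(X'')\ge0$ for all strict quotients" when $\theta(X)=0$. This holds because strict quotients are exactly the $X/X'$ with $X'$ a strict subobject, and $\theta$ is additive on dimension vectors. I expect no real obstacle beyond invoking this equivalence and the transitivity results.
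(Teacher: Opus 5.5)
Your proposal is correct and follows essentially the same argument as the paper. In both, a nonzero object of $\cW(\theta)$ lies in $\overline\cP(\theta)$ and $\overline\cQ(\theta)$ but not in $\cP(\theta)$ or $\cQ(\theta)$. Conversely, a strict quotient (resp.\ subobject) $X'$ with $\theta(X')=0$ of an object in $\overline\cP(\theta)$ (resp.\ $\overline\cQ(\theta)$) lies in $\cW(\theta)$; you make the needed transitivity of fibrations and cofibrations explicit, whereas the paper uses it implicitly.
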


\begin{proof}
Clearly, $\cP(\theta)\subseteq \overline \cP(\theta)$. When $\cW(\theta)\neq0$, the inclusion is strict since $\cW(\theta)$ is contains in $\overline\cP(\theta)$ but disjoint from $\cP(\theta)$. When $\cW(\theta)=0$, we have equality. To see this, take any $X\in \overline \cP(\theta)$. Then $\theta(X')\ge0$ for all strict quotients $X'$ of $X$. If $\theta(X')=0$ then we would have $X'\in \cW(\theta)$. So, we must have $\theta(X')>0$ for all $X'$ strict quotients of $X$ including $X'=X$. So, $X\in\cP(\theta)$ and we have equality: $\overline\cP(\theta)=\cP(\theta)$. The argument for $\overline \cQ(\theta)$ is similar.
\end{proof}
}

{%
\begin{thm}\label{thm: P(theta) is P}
    For every $\theta\in V_\Lambda$, $\cP(\theta)$ and $\overline\cP(\theta)$ are pseudo-torsion classes which are closed under extensions. In particular, they are closed under direct sums.
\end{thm}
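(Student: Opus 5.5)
We need to prove that $\cP(\theta)$ and $\overline\cP(\theta)$ are closed under strict quotients and under arbitrary (not necessarily strict) extensions in $\cG$; closure under direct sums is then the special case of split extensions. We write the argument for $\overline\cP(\theta)$ and obtain the $\cP(\theta)$ version by replacing $\ge0$ with $>0$ for nonzero objects throughout.

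\emph{Closure under strict quotients.} Let $M\in\overline\cP(\theta)$ and let $M\onto N$ be a fibration. A strict quotient $N''$ of $N$ is a strict quotient of $M$, because compositions of fibrations are fibrations (Theorem \ref{thm: composition of fibrations}). Hence $\theta(N'')\ge0$, and in particular $\theta(N)\ge0$. So $N\in\overline\cP(\theta)$. For $\cP(\theta)$ the same argument applies to nonzero $N''$.

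\emph{Closure under extensions.} Let $0\to A\to B\to C\to0$ be any short exact sequence in $\cG$ with $A,C\in\overline\cP(\theta)$. Since $\cG$ is a torsion class, $B\in\cG$. Let $B''$ be a strict quotient of $B$. By Proposition \ref{prop: duality of strict ses}, $B''$ is an extension of a strict quotient $A''$ of $A$ by a strict quotient $C''$ of $C$. Additivity of $\theta$ on dimension vectors gives
\[
\theta(B'')=\theta(A'')+\theta(C'')\ge0 .
\]
This includes the case $B''=B$. Thus $B\in\overline\cP(\theta)$.

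For $\cP(\theta)$, take $B''\neq0$. Then $A''$ and $C''$ are not both zero. Each nonzero term has $\theta>0$ and each zero term has $\theta=0$, so $\theta(B'')>0$. Closure under strict extensions is a special case of closure under all extensions. Nonemptiness is clear since $0$ lies in both classes. Direct sums are split extensions, so both classes are closed under direct sums.

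The only real input is Proposition \ref{prop: duality of strict ses}, which is what allows a non-strict extension to be handled. The delicate point is the strict-positivity bookkeeping for $\cP(\theta)$: we must make sure that some nonzero term appears among $A''$ and $C''$. This holds because $B''\ne0$ and the sequence $0\to A''\to B''\to C''\to 0$ is exact.
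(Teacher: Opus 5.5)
Your proposal is correct and follows essentially the same route as the paper. You close under strict quotients via transitivity of fibrations, and you close under arbitrary extensions via Proposition \ref{prop: duality of strict ses} together with additivity of $\theta$. Your explicit handling of the case where one of $A''$, $C''$ is zero is a small point the paper leaves implicit.
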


\begin{proof}
    Let $M\in \cP(\theta)$ and let $M''$ be a strict quotient of $M$. Then $\theta(M'')>0$ and $\theta(N)>0$ for any strict quotient $N$ of $M''$ since $N$ is also a strict quotient of $M$. So $M''\in\cP(\theta)$ and $\cP(\theta)$ is closed under strict quotients.

    Next, we show that $\cP(\theta)$ is closed under extensions, in particular strict extensions. We let $0\to A\to B\to C\to0$ be a short exact sequence where $A,C\in\cP(\theta)$. Then $B\in\cG$ since $\cG$ is closed under extensions and
    \[
    \theta(B)=\theta(A)+\theta(C)>0.
    \]
    By Proposition \ref{prop: duality of strict ses}, any strict quotient $B''$ of $B$ is an extension of a strict quotient $A''$ of $A$ by a strict quotient $C''$ of $C$. Then 
        \[
    \theta(B'')=\theta(A'')+\theta(C'')>0.
    \]
    So, $B\in \cP(\theta)$ as claimed. The proof for $\overline\cP(\theta)$ is similar.
\end{proof}

An analogous argument, with inequalities reversed, proves the following.
\begin{thm}\label{thm: Q(theta) is Q}
    For any $\theta\in V_\Lambda$, $\cQ(\theta)$ and $\overline\cQ(\theta)$ are pseudo-torsionfree classes which are closed under extensions.
\end{thm}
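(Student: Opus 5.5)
The plan is to dualize the proof of Theorem \ref{thm: P(theta) is P}, replacing strict quotients by strict subobjects and reversing every inequality. We treat $\cQ(\theta)$ first; the argument for $\overline\cQ(\theta)$ is identical with strict inequalities replaced by weak ones, and nonzero strict subobjects replaced by all strict subobjects.

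\emph{Closure under strict subobjects.} Take $N\in\cQ(\theta)$ and a nonzero strict subobject $N'\cof N$. Then $\theta(N')<0$ by the definition of $\cQ(\theta)$. Any nonzero strict subobject $N''\cof N'$ is a strict subobject of $N$ by Theorem \ref{thm: composition of cofibrations}, so $\theta(N'')<0$. Hence $N'\in\cQ(\theta)$. The case $N'=0$ is trivial, since $0$ lies in $\cQ(\theta)$ by definition.

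\emph{Closure under all extensions.} Let $0\to A\to B\xrightarrow{p} C\to 0$ be exact with $A,C\in\cQ(\theta)$. Then $B\in\cG$ because $\cG$ is closed under extensions. If $B\neq 0$, then at least one of $A,C$ is nonzero, so $\theta(B)=\theta(A)+\theta(C)<0$. Now take any nonzero strict subobject $B'\cof B$. The Main Lemma \ref{lemma A} gives the subsequence $0\to A'\to B'\to C'\to 0$ with $A'=A\cap B'$ and $C'=pB'$. It also says $A'\cof A$ and $C'\cof C$ are strict subobjects; this is the dual of Proposition \ref{prop: duality of strict ses}. Since $B'\neq0$, at least one of $A',C'$ is nonzero, and each nonzero one has negative $\theta$ because $A,C\in\cQ(\theta)$. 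Therefore $\theta(B')=\theta(A')+\theta(C')<0$, so $B\in\cQ(\theta)$. Strict extensions are a special case, so $\cQ(\theta)$ is a pseudo-torsionfree class.

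The only step that needs care is the appeal to the Main Lemma. Its hypothesis requires the short exact sequence to consist of objects of $\cG$, and it does. It requires $B'$ to be a strict subobject of $B$, and this is exactly what we assume. The conclusion then supplies both needed facts: $A'$ is a strict subobject of $A$, and $C'$ lies in $\cG$ and is a strict subobject of $C$. This is precisely what lets us apply the membership of $A$ and $C$ in $\cQ(\theta)$.
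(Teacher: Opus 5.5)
Your proof is correct and follows the paper's approach. The paper proves this by saying the argument for Theorem \ref{thm: P(theta) is P} works with the inequalities reversed, and you carry out exactly that dualization. In place of Proposition \ref{prop: duality of strict ses} you correctly use the Main Lemma \ref{lemma A}, and you note that $A'=A\cap B'$ lies in $\cG$ because $B'$ is a strict subobject of $B$.
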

}

{%
\begin{thm}\label{thm: W(theta) is W}
    For any $\theta\in V_\Lambda$, $\cW(\theta)$ is a pseudo-wide subcategory of $\cG$ which is closed under extensions.
\end{thm}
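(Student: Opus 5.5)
The plan is to express $\cW(\theta)$ as an intersection, $\cW(\theta)=\overline\cP(\theta)\cap\overline\cQ(\theta)$, and then verify the axioms of Definition \ref{def: pseudo-wide} directly. The intersection formula holds because an object $X\in\cG$ lies in $\cW(\theta)$ exactly when $\theta(X)=0$ and $\theta(X')\le 0$ for all strict subobjects $X'$. This forces $\theta(X'')=\theta(X)-\theta(X')\ge0$ for every strict quotient $X''$, so $X\in\overline\cP(\theta)$; conversely, the definition of $\overline\cQ(\theta)$ together with $\theta(X)=0$ puts $X$ in $\cW(\theta)$.

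Closure under extensions then follows at once. By Theorems \ref{thm: P(theta) is P} and \ref{thm: Q(theta) is Q}, both $\overline\cP(\theta)$ and $\overline\cQ(\theta)$ are closed under all extensions in $\cG$, so their intersection is as well. In particular $\cW(\theta)$ is closed under strict extensions, and it contains $0$.

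For the kernel, image and cokernel condition, take a strict morphism $f:A\to B$ with $A,B\in\cW(\theta)$. Set $K=\ker f$, $I=\im f$ and $C=\coker f$; by strictness all three lie in $\cG$.
\begin{itemize}
\item Since $I$ is a strict quotient of $A$ and $A\in\overline\cP(\theta)$, we get $\theta(I)\ge0$.
\item Since $I$ is a strict subobject of $B$ and $B\in\overline\cQ(\theta)$, we get $\theta(I)\le 0$.
\item Hence $\theta(I)=0$, and additivity of $\theta$ gives $\theta(K)=\theta(A)-\theta(I)=0$ and $\theta(C)=\theta(B)-\theta(I)=0$.
\end{itemize}
Membership in $\cW(\theta)$ then comes from inheritance. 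Because $\overline\cP(\theta)$ is a pseudo-torsion class, it contains the strict quotients $I$ of $A$ and $C$ of $B$. Because $\overline\cQ(\theta)$ is a pseudo-torsionfree class, it contains the strict subobjects $K$ of $A$ and $I$ of $B$. So $I\in\cW(\theta)$ is already settled.

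The remaining and slightly more delicate step is to show $K\in\overline\cP(\theta)$ and $C\in\overline\cQ(\theta)$. For $K$, any strict quotient $K/K'$ has $K'\cof K\cof A$, so $K'$ is a strict subobject of $A$ by Theorem \ref{thm: composition of cofibrations}. Then $\theta(K')\le0$, hence $\theta(K/K')=\theta(K)-\theta(K')\ge0$, and so $K\in\overline\cP(\theta)$. Dually, a strict subobject $C'$ of $C$ has $C/C'$ a strict quotient of $B$ by Theorem \ref{thm: composition of fibrations}. Then $\theta(C/C')\ge 0$, which gives $\theta(C')\le 0$, so $C\in\overline\cQ(\theta)$.

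Hence $K,I,C\in\cW(\theta)$, and $\cW(\theta)$ is a pseudo-wide subcategory closed under extensions.
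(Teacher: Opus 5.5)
Your proof is correct and is essentially the paper's argument. It uses the same sandwich $0\le\theta(I)\le 0$ and the same composition of cofibrations and fibrations to control strict subobjects of $K$ and strict quotients of $C$. Extension-closure comes from the closure of $\overline\cP(\theta)$ and $\overline\cQ(\theta)$ under extensions, which the paper reproves directly via the Main Lemma. Your ``delicate step'' is in fact redundant. When $\theta(X)=0$, the subobject and quotient conditions are equivalent, as your own intersection argument shows. So $\theta(K)=0$ with $K\in\overline\cQ(\theta)$ already gives $K\in\cW(\theta)$, and $\theta(C)=0$ with $C\in\overline\cP(\theta)$ already gives $C\in\cW(\theta)$.
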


\begin{proof}
    Let $X,Y\in \cW(\theta)$ and let $f:X\to Y$ be a strict morphism with image $I$, kernel $K$ and cokernel $C$. Then $\theta(X)=0=\theta(Y)$ and $\theta(I)\le0$ being a strict subobject of $Y$ and $\theta(I)\ge0$ being a strict quotient of $X$. So, $\theta(I)=0$. This makes $\theta(K)=\theta(X)-\theta(I)=0$ and $\theta(C)=\theta(Y)-\theta(I)=0$.
    
    Any strict subobject $K'$ of $K$ is also a strict subobject of $X$. So $\theta(K')\le0$ and $K\in\cW(\theta)$. Similarly, $I\in\cW(\theta)$ since any strict subobject of $I$ is also a strict subobject of $Y$. Finally, $C\in \cW(\theta)$ since any strict quotient $C''$ of $C$ is a strict quotient of $Y$ making $\theta(C'')\ge0$.
    
    Next, we show that $\cW(\theta)$ is closed under extensions and, therefore under strict extensions. Given a short exact sequence
    \[
    	0\to X\to Z\to Y\to 0
    \]
    where $X,Y\in\cW(\theta)$ we have $\theta(Z)=\theta(X)+\theta(Y)=0$. By Proposition \ref{prop: duality of strict ses}, any strict subobject $Z'$ of $Z$ is an extension of a strict subobject $X'$ of $X$ by a strict subobject $Y'$ of $Y$. So, $\theta(Z')=\theta(X')+\theta(Y')\le0$. So, $Z\in \cW(\theta)$.
\end{proof}
}

\begin{rem}\label{rem: not extension closed}
    Not all pseudo-torsion classes are closed under extension. For example, in the $B_2$ example below (\ref{B2 example}), the class of objects $\cP=add(P_1\oplus I_1)$ is not closed under extention of $\Lambda$-modules since $0\to P_1\to I_2\oplus I_2\to I_1\to 0$ is an extension and $I_2\oplus I_2$ is not in $\cP$. However, $\cP$ is a pseudo-torsion class since there are no strict morphisms from $P_1$ or $I_1$ into $mI_2$ for any $m\ge1$. So, $\cP$ is closed under strict extensions and also under strict quotients since the only strict quotient of $nP_1\oplus mI_1$ is a direct summand.

    The fact that $\cP(\theta)$ is closed under $\Lambda$-extensions is a consequence of the fact that this is defined for $\theta$ in the stability space of $\Lambda$. If we look at the stability space of $\cG$ we will see two more chambers giving two new pseudo-torsion classes. The first is $\cP=add(P_1\oplus I_1)$ discussed above and the second is $add(I_2)$, the collection of all objects $mI_2$. This is closed under strict quotients since the epimorphism $2I_2\to I_1$ is not a strict morphism.
\end{rem}

{%
\subsection{Perpendicular classes}\label{ss: perp classes}
We will see that $\cP(\theta)$ and $\cQ(\theta)$ are perpendicular to each other according to the following definition if $\cW(\theta)=0$. When $\cW(\theta)\neq0$, we will see that $\cP(\theta)^\perp=\overline \cQ(\theta)$ and $\overline\cP(\theta)^\perp= \cQ(\theta)$. Recall from Definition \ref{def: perp class} that we only consider strict morphisms when referring to perpendicular classes.

\commentout{\begin{defn}\label{def: perp class}
    For any $\cX\subseteq\cG$, its \textbf{right perpendicular class} $\cX^\perp$ is defined to be the class of all $Y\in \cG$ so that there are no nonzero strict morphisms $X\to Y$ for any $X\in\cX$. Similarly, for any $\cY\subseteq\cG$, the \textbf{left perpendicular class} $\,^\perp \cY$ is the class of all $X\in\cG$ so that there are no nonzero strict morphisms from $X$ to any object in $\cY$.
\end{defn}}

\begin{thm}\label{thm: duality between P(theta) and Q(theta)}
    If $\cW(\theta)=0$ then $\cP(\theta)$ is the left perpendicular class of $\cQ(\theta)$ and (equivalently) $\cQ(\theta)=\cP(\theta)^\perp$.
\end{thm}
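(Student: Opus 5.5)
We prove first that $\cQ(\theta)=\cP(\theta)^\perp$. The equality $\,^\perp\cQ(\theta)=\cP(\theta)$ then follows formally from Theorem \ref{thm: duality between P,Q}, since $\cP(\theta)$ is a pseudo-torsion class by Theorem \ref{thm: P(theta) is P}: we get $\,^\perp\cQ(\theta)=\,^\perp(\cP(\theta)^\perp)=\cP(\theta)$.

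For the inclusion $\cQ(\theta)\subseteq\cP(\theta)^\perp$, take $P\in\cP(\theta)$, $Q\in\cQ(\theta)$ and a strict morphism $f:P\to Q$ with image $I$. Since $f$ is strict, $P\onto I$ is a fibration, so $I$ is a strict quotient of $P$. Also $I\cof Q$ is a cofibration, so $I$ is a strict subobject of $Q$. If $I\neq0$, the definition of $\cP(\theta)$ gives $\theta(I)>0$ and the definition of $\cQ(\theta)$ gives $\theta(I)<0$, a contradiction. Hence $f=0$. This inclusion does not even use $\cW(\theta)=0$.

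For the reverse inclusion $\cP(\theta)^\perp\subseteq\cQ(\theta)$, take $N\in\cP(\theta)^\perp$ and a nonzero strict subobject $N'\cof N$ (including $N'=N$). We must show $\theta(N')<0$. Suppose instead that $\theta(N')\ge0$, and choose such an $N'$ of minimal length among the nonzero strict subobjects of $N$ with $\theta\ge0$. Then every nonzero proper strict subobject $N''$ of $N'$ is also a strict subobject of $N$ by Theorem \ref{thm: composition of cofibrations}, so by minimality $\theta(N'')<0$.

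We now split into two cases.
\begin{enumerate}
\item If $\theta(N')=0$, then $\theta(N'')\le 0$ for all strict subobjects $N''$ of $N'$, so $N'\in\cW(\theta)$. This contradicts $\cW(\theta)=0$ because $N'\neq0$.
\item If $\theta(N')>0$, we claim $N'\in\cP(\theta)$. Any nonzero strict quotient $N'/K$ has $K$ a proper strict subobject of $N'$, so $\theta(K)<0$ when $K\neq0$ and $\theta(K)=0$ when $K=0$. In either case $\theta(N'/K)=\theta(N')-\theta(K)>0$. Thus $N'\in\cP(\theta)$, and the inclusion $N'\cof N$ is a nonzero strict morphism from an object of $\cP(\theta)$ to $N$, contradicting $N\in\cP(\theta)^\perp$.
\end{enumerate}
Therefore $\theta(N')<0$ for every nonzero strict subobject $N'$ of $N$, so $N\in\cQ(\theta)$.

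The main point needing care is the minimal-choice argument, specifically the check that strict subobjects of $N'$ are strict subobjects of $N$. That check is Theorem \ref{thm: composition of cofibrations}, and no other hidden step is needed.
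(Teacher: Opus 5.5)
Your proof is correct and is essentially the paper's argument. The paper proves $\cP(\theta)^\perp=\overline\cQ(\theta)$ for every $\theta$ by the same minimal-strict-subobject argument (choosing $Y'$ minimal with $\theta(Y')>0$), and then specializes using Lemma \ref{lem: Pbar=P when W=0}; you instead choose $N'$ minimal with $\theta(N')\ge0$ and rule out $\theta(N')=0$ directly from $\cW(\theta)=0$, and your deduction of $\,^\perp\cQ(\theta)=\cP(\theta)$ from Theorem \ref{thm: duality between P,Q} is exactly the paper's ``equivalently''.
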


{
\begin{proof}
This follows from Theorem \ref{thm: duality between P(theta) and Q(theta)bar} below since $\cW(\theta)=0$ implies $\overline\cQ(\theta)=\cQ(\theta)$ by Lemma \ref{lem: Pbar=P when W=0}.
\end{proof}

\begin{thm}\label{thm: duality between P(theta) and Q(theta)bar}
    For any $\theta\in V_\Lambda$, $\cP(\theta)$ is the left perpendicular class of $\overline\cQ(\theta)$ and (equivalently) $\overline\cQ(\theta)=\cP(\theta)^\perp$.
\end{thm}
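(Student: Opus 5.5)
We need to show $\overline\cQ(\theta)=\cP(\theta)^\perp$. By Theorem \ref{thm: duality between P,Q}, the equivalent statement $\cP(\theta)={}^\perp\overline\cQ(\theta)$ then follows: $\cP(\theta)$ is a pseudo-torsion class by Theorem \ref{thm: P(theta) is P}, so ${}^\perp(\cP(\theta)^\perp)=\cP(\theta)$.

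For $\overline\cQ(\theta)\subseteq\cP(\theta)^\perp$, let $P\in\cP(\theta)$, $N\in\overline\cQ(\theta)$, and let $f:P\to N$ be a nonzero strict morphism. Its image $I$ is a nonzero strict quotient of $P$, so $\theta(I)>0$. It is also a strict subobject of $N$, so $\theta(I)\le0$. This is a contradiction, hence $f=0$.

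For the reverse inclusion, take $N\in\cP(\theta)^\perp$ and suppose $N\notin\overline\cQ(\theta)$. Then some strict subobject $N'\cof N$ (possibly $N$ itself) has $\theta(N')>0$. Among all such strict subobjects choose one, $M$, of minimal length. I claim $M\in\cP(\theta)$. Let $M''=M/K$ be a nonzero strict quotient of $M$, so $K\cof M$ is a proper strict subobject. Since $K\cof M\cof N$, Theorem \ref{thm: composition of cofibrations} makes $K$ a strict subobject of $N$ of smaller length than $M$. By minimality $\theta(K)\le0$, and therefore
\[
\theta(M'')=\theta(M)-\theta(K)\ge\theta(M)>0.
\]
This holds also for $M''=M$ (take $K=0$), so $M\in\cP(\theta)$. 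The inclusion $M\cof N$ is then a nonzero strict morphism (fibration $M\onto M$ followed by a cofibration) from an object of $\cP(\theta)$ to $N$, contradicting $N\in\cP(\theta)^\perp$. Hence $N\in\overline\cQ(\theta)$.

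The only delicate point is the minimality step. Minimizing over length among strict subobjects of $N$ with positive $\theta$-value ensures that every proper strict subobject of $M$, being strict in $N$ by transitivity of cofibrations, has nonpositive $\theta$-value. This is exactly what converts positivity of $\theta(M)$ into positivity on all strict quotients of $M$. The rest is formal.
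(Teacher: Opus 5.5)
Your proof is correct and follows essentially the same route as the paper: the image argument gives $\overline\cQ(\theta)\subseteq\cP(\theta)^\perp$, and for the reverse inclusion you take a minimal strict subobject with positive $\theta$-value and show it lies in $\cP(\theta)$. Minimizing by length, citing transitivity of cofibrations, and invoking Theorem \ref{thm: duality between P,Q} for the ``equivalently'' clause only make explicit steps that the paper leaves implicit.
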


\begin{proof}
    If $M\in \cP(\theta)$ and $N\in\overline\cQ(\theta)$ then there cannot be a nonzero strict morphism $f:M\to N$ since its image $\im f$ lies in both $\cP(\theta)$ and $\overline\cQ(\theta)$ making $\theta(\im f)>0$ and $\theta(\im f)\le0$ which is a contradiction unless $\im f=0$. Thus $\overline\cQ(\theta)\subset \cP(\theta)^\perp$.

    Conversely, suppose that $Y$ is not in $\overline\cQ(\theta)$. Then there is a strict subobject $Y'$ of $Y$ so that $\theta(Y')>0$. Take $Y'$ minimal. Then $\theta(Z)\le 0$ for all nonzero strict subobjects of $Y$ which are properly contained in $Y'$. Then
    \[
    	\theta(Y'/Z)=\theta(Y')-\theta(Z)>0.
    \]
This implies that $Y'\in\cP(\theta)$ and the inclusion map $Y'\cof Y$ is a nonzero strict morphism. So, $Y$ is not in $\cP(\theta)^\perp$. We conclude that $\cP(\theta)^\perp=\overline\cQ(\theta)$.
\end{proof}
}

{
By a similar argument we have the following.
\begin{thm}\label{thm: duality between P(theta)bar and Q(theta)}
    For any $\theta\in V_\Lambda$, $\overline\cP(\theta)$ is the left perpendicular class of $\cQ(\theta)$ and (equivalently) $\cQ(\theta)=\overline\cP(\theta)^\perp$.
\end{thm}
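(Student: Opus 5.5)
The plan is to dualize the proof of Theorem \ref{thm: duality between P(theta) and Q(theta)bar}. The argument has two inclusions. First I show $\cQ(\theta)\subseteq\overline\cP(\theta)^\perp$. Then I show the reverse inclusion by exhibiting a nonzero strict morphism from an object of $\overline\cP(\theta)$ into any $Y\notin\cQ(\theta)$. Once $\cQ(\theta)=\overline\cP(\theta)^\perp$ is known, the statement that $\overline\cP(\theta)={}^\perp\cQ(\theta)$ follows from Theorem \ref{thm: duality between P,Q}. That theorem applies because $\overline\cP(\theta)$ is a pseudo-torsion class by Theorem \ref{thm: P(theta) is P}, so ${}^\perp(\overline\cP(\theta)^\perp)=\overline\cP(\theta)$.

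For the first inclusion, take $M\in\overline\cP(\theta)$, $N\in\cQ(\theta)$ and a strict morphism $f:M\to N$ with image $I$. Since $I$ is a strict quotient of $M$, we have $\theta(I)\ge0$. Since $I$ is a strict subobject of $N$, if $I\ne0$ then $\theta(I)<0$ by the definition of $\cQ(\theta)$, which covers nonzero strict subobjects including $N$ itself. This is a contradiction, so $f=0$.

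For the converse, suppose $Y\in\cG$ is not in $\cQ(\theta)$. Then $Y\neq0$, and some nonzero strict subobject $Y'\cof Y$ (possibly $Y'=Y$) has $\theta(Y')\ge0$. Choose such a $Y'$ minimal. Every nonzero strict subobject $Z$ of $Y'$ properly contained in it is also a strict subobject of $Y$ by Theorem \ref{thm: composition of cofibrations}, so minimality gives $\theta(Z)<0$. Now let $Y'/Z$ be any strict quotient of $Y'$, with $Z\cof Y'$ strict. If $Z=0$, then $\theta(Y')\ge0$. If $Z\neq0$ is proper, then $\theta(Y'/Z)=\theta(Y')-\theta(Z)>0$. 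If $Z=Y'$, the quotient is $0$. Hence $Y'\in\overline\cP(\theta)$, and the cofibration $Y'\cof Y$ is a nonzero strict morphism, so $Y\notin\overline\cP(\theta)^\perp$.

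The only point needing care is the minimality step. There I must make sure that strict subobjects of $Y'$ are strict subobjects of $Y$, so that minimality applies, and that every strict quotient of $Y'$ arises as $Y'/Z$ with $Z$ a strict subobject. Both facts come directly from the definitions and Theorem \ref{thm: composition of cofibrations}, so I expect no real obstacle.
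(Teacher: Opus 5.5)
Your proof is correct and is essentially the paper's argument. The paper proves this ``by a similar argument'' to Theorem \ref{thm: duality between P(theta) and Q(theta)bar}, which is exactly your dualization: the image of a strict map gives one inclusion, and a minimal nonzero strict subobject $Y'$ with $\theta(Y')\ge 0$ gives the other. The equivalence of the two formulations comes, as in your write-up, from the bijection of Theorem \ref{thm: duality between P,Q}.
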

}

The statements that $\overline\cQ(\theta)=\cP(\theta)^\perp$ and $\cQ(\theta)=\overline\cP(\theta)^\perp$ follow from similar arguments. They also follow from the more general formula in {Theorem \ref{thm: duality between P,Q}.}
}
\subsection{Pseudo-chambers}\label{ss: pseudo-chambers}

{%
Recall that $\cW(\cG)$ was defined to be the set of all $\theta\in V_\Lambda$ so that $\cW(\theta)\neq0$. This is the same as the union of all pseudo-walls $D_\cG(M)$ {for $M \in \cG$} because if $\cW(\theta)\neq 0$, it lies in the wall $D_{\cG}(M)$ for some $M\in\cG$. The standard definition of a ``chamber'' is a component of the complement of the closure of $\cW(\cG)$ in $V_\Lambda$. By a \textbf{pseudo-chamber} we mean a path component in the complement of $\cW(\cG)$. We will show that each pseudo-chamber has a well-defined pseudo-torsion class {and a well-defined pseudo-torsion-free class} and that these are different for different chambers.}

{
\begin{defn}\label{defn: U(M)}%
For every object $M$ of $\cG$, let $\cU(M)$ denote the set of all $\theta$ so that $M\in \cP(\theta)$. Let $\overline\cU(M)$ denote the set of all $\theta$ so that $M\in \overline\cP(\theta)$. {}Let $\cV(M)$ denote the set of all $\theta$ so that $M\in \cQ(\theta)$. Similarly, $\overline\cV(M)$ is the set of all $\theta$ so that $M\in \overline\cQ(\theta)$.
\end{defn}

Clearly, $\cU(M),\cV(M)$ are open, $\overline\cU(M),\overline\cV(M)$ are closed and all four sets are convex.}

{
\begin{lem}\label{lem: dU(M) is union of D(M')}%
    a) $\overline\cU(M)$ is the closure of $\cU(M)$.

{b) {}$\overline\cV(M)$ is the closure of $\cV(M)$.}

    c) $\partial\cU(M)=\overline\cU(M)-\cU(M)$ is contained in the union of walls $D_\cG(\overline M)$ where $\overline M$ is a a strict quotient of $M$ (including $\overline M=M$).

   d) {}$\partial\cV(M)=\overline\cV(M)-\cV(M)$ is contained in the union of walls $D_\cG(M')$ where $M'$ is a strict subobject of $M$ (including $M'=M$).
\end{lem}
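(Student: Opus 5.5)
The plan is to prove (a) and (c) directly and get (b) and (d) by the dual argument, with strict subobjects in place of strict quotients and inequalities reversed. The key fact is that a fixed $M\in\cG$ has only finitely many strict quotients up to dimension vector, so only finitely many linear functionals $\theta\mapsto\theta(\overline M)$ are involved. Thus $\cU(M)$ is the set where finitely many linear forms $\ell_1,\dots,\ell_k$ are all $>0$, namely $\ell_j(\theta)=\theta([\overline M_j])$ for the nonzero strict quotients $\overline M_j$ of $M$, including $M$ itself. Likewise $\overline\cU(M)$ is the set where all $\ell_j\ge0$. Both are convex polyhedral: one open, one closed. (If $M=0$, both sets are all of $V_\Lambda$ and everything is trivial.)

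For (a), the inclusion $\overline{\cU(M)}\subseteq\overline\cU(M)$ is clear because $\overline\cU(M)$ is closed. For the converse, take $\theta\in\overline\cU(M)$. The main obstacle is that $\cU(M)$ might be empty, so I first exhibit a point $\theta_0\in\cU(M)$. Take $\theta_0=(1,1,\dots,1)$, which sends every nonzero module to its total length, a positive number. Then $\theta_0(\overline M)>0$ for every nonzero strict quotient, so $\theta_0\in\cU(M)$. Now set $\theta_s=(1-s)\theta+s\theta_0$ for $s\in(0,1]$. For each $j$ we get $\ell_j(\theta_s)=(1-s)\ell_j(\theta)+s\ell_j(\theta_0)>0$, so $\theta_s\in\cU(M)$, and $\theta_s\to\theta$ as $s\to0$. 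Hence $\theta\in\overline{\cU(M)}$.

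For (b) I repeat the argument with $\theta_0=(-1,\dots,-1)$, which lies in $\cV(M)$.

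For (c), take $\theta\in\overline\cU(M)-\cU(M)$. Then all $\ell_j(\theta)\ge0$ and some $\ell_j(\theta)=0$. So there is a nonzero strict quotient $\overline M$ of $M$ with $\theta(\overline M)=0$. Every strict quotient $N$ of $\overline M$ is a strict quotient of $M$ by Theorem \ref{thm: composition of fibrations}, so $\theta(N)\ge0$. By the equivalent "strict quotient" form of Definition \ref{def: pseudo-wall}, this puts $\theta\in D_\cG(\overline M)$. It remains to check that the two forms of the definition agree, i.e. that $\theta(N)\ge0$ for all strict quotients is equivalent to $\theta(N')\le0$ for all strict subobjects. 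Given $\theta(\overline M)=0$, this holds because $N=\overline M/N'$ exactly when $N'$ is a strict subobject, and $\theta(N)=-\theta(N')$.

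For (d), the argument is dual: some nonzero strict subobject $M'$ has $\theta(M')=0$, and by Theorem \ref{thm: composition of cofibrations} every strict subobject of $M'$ is a strict subobject of $M$. Hence all such subobjects have $\theta\le0$, and $\theta\in D_\cG(M')$.

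The only genuinely delicate point is making sure the sets are defined by finitely many strict inequalities, so that the convexity and segment argument works. Finiteness gives this, together with the fact that the relevant inequalities for a quotient of a quotient are already among those for $M$.
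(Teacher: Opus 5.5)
Your proof is correct and is essentially the paper's argument. For (a) the paper perturbs $\theta$ to $\theta+\varepsilon\eta$ with $\eta=(1,\dots,1)$, a positive rescaling of your convex combination. For (c) the paper picks $\overline M$ minimal among strict quotients with $\theta(\overline M)=0$, which additionally places $\theta$ in $int\,D_\cG(\overline M)$; your choice of any nonzero such $\overline M$, combined with transitivity of strict quotients, already suffices for the stated containment.
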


\begin{proof}%
    a) For any $\theta\in \overline\cU(M)$ we have $\theta(\overline M)\ge0$ for all strict quotients $\overline M$ of $M$. So $\theta+\varepsilon\eta\in \cU(M)$ for all $\varepsilon>0$. These converge to $\theta$ so, $\theta$ is in the closure of $\cU(M)$.

    c) If $\theta\in\partial \cU(M)$ then $\theta(\overline M)\ge0$ for all strict quotients $\overline M$ of $M$ and equality holds for at least one such $\overline M$. Taking $\overline M$ to be minimal we see that $\theta\in D_\cG(\overline M)$.
    
    (b) and (d) are proved analogously.
\end{proof}
}

{

\begin{thm}\label{thm: P(theta) is constant on path components}%
$\cP(\theta)$ is constant on path component of the complement of $\cW(\cG)$.
\end{thm}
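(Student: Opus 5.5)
Fix a continuous path $\theta_t$, $t\in[0,1]$, lying in the complement of $\cW(\cG)$. The claim is that $\cP(\theta_t)$ does not depend on $t$. Since $\cP(\theta)$ is determined by which objects $M$ it contains, it is enough to show, for each fixed $M\in\cG$, that the set
\[
T_M=\{t\in[0,1] : M\in\cP(\theta_t)\}=\{t : \theta_t\in\cU(M)\}
\]
is either empty or all of $[0,1]$. Because $[0,1]$ is connected, I will show that $T_M$ is both open and closed in $[0,1]$.

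\emph{Openness.} The set $\cU(M)$ is open in $V_\Lambda$. This holds because it is cut out by finitely many strict inequalities $\theta(\overline M)>0$, one for each strict quotient $\overline M$ of $M$. Only finitely many dimension vectors occur among these quotients, even if there are infinitely many quotient objects, so the condition really is finitely many inequalities. Continuity of $t\mapsto\theta_t$ then makes $T_M$ open in $[0,1]$.

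\emph{Closedness.} Suppose $t_k\to t_\ast$ with $t_k\in T_M$. Then $\theta_{t_\ast}$ lies in the closure of $\cU(M)$, which is $\overline\cU(M)$ by Lemma \ref{lem: dU(M) is union of D(M')}(a). Assume for contradiction that $\theta_{t_\ast}\notin\cU(M)$. Then $\theta_{t_\ast}\in\partial\cU(M)$, so by Lemma \ref{lem: dU(M) is union of D(M')}(c) it lies in some wall $D_\cG(\overline M)$, where $\overline M$ is a strict quotient of $M$. Hence $\overline M\in\cW(\theta_{t_\ast})$, so $\cW(\theta_{t_\ast})\neq0$. This contradicts the assumption that the path avoids $\cW(\cG)$. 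So $T_M$ is closed.

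One point needs care: the object $\overline M$ produced by Lemma \ref{lem: dU(M) is union of D(M')}(c) must be nonzero. This is where I expect the only real subtlety. If $M=0$, then $T_M=[0,1]$ trivially. If $M\neq0$, take $\overline M$ to be minimal among the nonzero strict quotients with $\theta_{t_\ast}(\overline M)=0$. Then $\theta_{t_\ast}$ is $\ge0$ on all strict quotients of $\overline M$, which lie in $D_\cG(\overline M)$ by transitivity of fibrations (Theorem \ref{thm: composition of fibrations}). So $\overline M$ is a nonzero element of $\cW(\theta_{t_\ast})$, as required.

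With $T_M$ open, closed, and $[0,1]$ connected, each $T_M$ is empty or everything. Therefore $\cP(\theta_0)=\cP(\theta_1)$. The same argument applied to $\cV(M)$ and Lemma \ref{lem: dU(M) is union of D(M')}(b),(d) shows that $\cQ(\theta)$ is also constant on each path component.
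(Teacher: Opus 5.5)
Your proof is correct and follows the paper's argument: the paper observes that a path from a point of $\cU(M)$ to a point outside it must meet $\partial\cU(M)=\overline\cU(M)-\cU(M)$, which lies in a union of walls by Lemma \ref{lem: dU(M) is union of D(M')}(c), and your open/closed argument on $T_M$ is the explicit proof of that topological step. You also check that the wall is labeled by a \emph{nonzero} strict quotient, so that it lies in $\cW(\cG)$ and not in the trivial $D_\cG(0)=V_\Lambda$; the paper leaves this implicit.
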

\begin{proof}%
    Let $\theta_t$, $0\le t\le 1$, be a continuous path in the complement of $\cW(\cG)$. If $M\in\cP(\theta_0)$ and $M\notin\cP(\theta_1)$ then $\theta_0\in \cU(M)$ and $\theta_1\notin\cU(M)$. So, the path $\theta_t$ must pass through $\partial\cU(M)$ which is a union of walls, a contradiction. So, $\cP(\theta_0)=\cP(\theta_1)$ as claimed.
\end{proof}
}

{
\begin{thm}\label{thm: P(theta) are different in different path components}%
    If $\theta_0,\theta_1$ lie in distinct path components of the complement of $\cW(\cG)$ then $\cP(\theta_0)\neq\cP(\theta_1)$.
\end{thm}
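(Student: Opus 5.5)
I would prove the contrapositive: assuming $\cP(\theta_0)=\cP(\theta_1)$ with $\theta_0,\theta_1\notin\cW(\cG)$, I would show that the straight segment $\theta_t=(1-t)\theta_0+t\theta_1$, $t\in[0,1]$, lies entirely in the complement of $\cW(\cG)$. Then $\theta_0$ and $\theta_1$ lie in the same path component. The two ingredients are the convexity of the sets $\cU(M)$ and $\cV(M)$ (Definition \ref{defn: U(M)}) and the canonical strict filtration of Theorem \ref{thm: tM and fM}.

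\textbf{Step 1: the torsionfree classes also agree.} Since $\cW(\theta_0)=0=\cW(\theta_1)$, Theorem \ref{thm: duality between P(theta) and Q(theta)} gives $\cQ(\theta_i)=\cP(\theta_i)^\perp$ for $i=0,1$. Hence $\cQ(\theta_0)=\cQ(\theta_1)$. Write $\cP$ and $\cQ$ for these common classes. By Theorems \ref{thm: P(theta) is P}, \ref{thm: Q(theta) is Q} and \ref{thm: duality between P,Q}, the pair $(\cP,\cQ)$ is a pseudo-torsion pair.

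\textbf{Step 2: decompose a hypothetical wall object.} Suppose for contradiction that some $\theta_t$ lies in $\cW(\cG)$. Then there is a nonzero $M\in\cW(\theta_t)$, so $\theta_t(M)=0$ and $\theta_t(M')\le 0$ for every strict subobject $M'$ of $M$. By Theorem \ref{thm: tM and fM} there is a strict extension $tM\cof M\onto fM$ with $tM\in\cP$ and $fM\in\cQ$.

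\textbf{Step 3: rule out $tM\neq0$.} Suppose $tM\neq 0$. Since $tM\in\cP(\theta_0)\cap\cP(\theta_1)$, both $\theta_0$ and $\theta_1$ lie in $\cU(tM)$. This set is convex, because it is cut out by the finitely many strict inequalities $\theta(\overline{X})>0$ over the nonzero strict quotients $\overline X$ of $tM$ (only finitely many dimension vectors occur). So $\theta_t\in\cU(tM)$, and in particular $\theta_t(tM)>0$. But $tM$ is a strict subobject of $M$, so $\theta_t(tM)\le0$, a contradiction. Hence $tM=0$.

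\textbf{Step 4: rule out $tM=0$.} If $tM=0$, then $M=fM\in\cQ(\theta_0)\cap\cQ(\theta_1)$. By convexity of $\cV(M)$ we get $\theta_t\in\cV(M)$, so $\theta_t(M)<0$ since $M\neq0$. This contradicts $\theta_t(M)=0$.

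Both cases are impossible, so the whole segment avoids $\cW(\cG)$, which proves the theorem.

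The only step needing care is Step 1: the endpoints must lie off the walls for the duality $\cQ=\cP^\perp$ to apply. This is exactly the hypothesis. The rest is convexity plus the strictness of the canonical filtration, which guarantees that $tM$ is a strict subobject, so the wall inequality for $M$ applies to it.
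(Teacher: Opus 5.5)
Your proof is correct, and it takes a genuinely different route from the paper's.

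The paper argues directly. The segment $\theta_t$ must meet some wall $D_\cG(X)$, and choosing $X$ of minimal size puts the meeting point in $int\,D_\cG(X)$. It then splits into transverse and parallel crossings. Each case uses an intermediate-value argument along the segment to rule out smaller walls, and produces an explicit witness lying in exactly one of $\cP(\theta_0)$, $\cP(\theta_1)$: either $X$ itself, or a minimal strict subobject $X'$ of $X$ with $\theta_1(X')>0$.

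You instead prove the contrapositive globally. Off the walls, $\cP(\theta)$ determines $\cQ(\theta)=\cP(\theta)^\perp$, so the common pair $(\cP,\cQ)$ splits any would-be wall object $M$ at $\theta_t$ as $tM\cof M\onto fM$. Convexity of $\cU(tM)$ or of $\cV(M)$ then contradicts $\theta_t(tM)\le 0$ or $\theta_t(M)=0$.

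Your route uses more of the earlier machinery (Theorems~\ref{thm: duality between P(theta) and Q(theta)} and~\ref{thm: tM and fM}). In exchange it avoids the minimality bookkeeping and the case analysis. It also proves something slightly stronger than the statement: the wall-free locus $\{\theta\notin\cW(\cG):\cP(\theta)=\cP\}$ is convex. So two wall-free points with the same $\cP$ are joined by a wall-free straight segment, and combined with Theorem~\ref{thm: P(theta) is constant on path components} this gives Remark~\ref{rem: path components are convex} at once.

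What your argument does not provide is an explicit object separating $\cP(\theta_0)$ from $\cP(\theta_1)$. The paper's proof identifies it as the label of a minimal wall crossed, or a strict subobject of that label, in the spirit of the wall-crossing criterion of Theorem~\ref{thm: when P(theta) is constant}.
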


\begin{proof}%
    Take the linear path $\theta_t=t\theta_1+(1-t)\theta_0$. Since $\theta_0,\theta_1$ are in different path components, the path crosses some wall, say $\theta_{t_0}\in D_\cG(X)$. Take $X$ to be minimal. Then $\theta_{t_0}\in int\,D_\cG(X)$. There are two cases. Either the path is parallel to this wall or it crosses the wall transversely. 
    
    In the second case we may assume, by symmetry, that $\theta_0(X)<0$ and $\theta_1(X)>0$. So, $X\notin\cP(\theta_0)$. We claim that $X\in\cP(\theta_1)$. Suppose not. Then $\theta_1(X')\le 0$ for some nonzero strict quotient $X'$ of $X$. Take $X'$ minimal. Then $\theta_1(X'')>0$ for all strict quotients $X''$ of $X$. But $\theta_{t_0}(X')>0$ since $\theta_{t_0}\in int\,D_\cG(X)$. Since the path $\theta_t$ is linear, there is some $t_1\in (t_0,1]$ so that $\theta_{t_1}(X')=0$ and $\theta_{t_1}(X'')>0$ for all proper strict quotients $X''$ of $X'$. Then $\theta_{t_1}\in D(X')$ contradicting the minimality of $X$. 
    
    Suppose now that the path is parallel to the wall $D_\cG(X)$. Then we have $\theta_0(X)=0=\theta_1(X)$. Since $\theta_1$ does not lie on any wall, there must be some strict subobject $X'$ of $X$ so that $\theta_1(X')>0$. Take $X'$ minimal. Since $\theta_t(X')<0$, there must be some $t<s<1$ so that $\theta_s(X')=0$. By minimality of $X'$ we have $\theta_1(Z)\le0$ for any $Z\subset X'$. So, $\theta_s(Z)\le0$ giving $X'\in \cW(\theta_s)$ contradicting the minimality of $X$. 

    Since $\theta_1(X')>0$ and $\theta_1(Z)\le0$ for all strict $Z\subset X'$, we have $\theta_1(X'/Z)>0$ for all such $Z$ making $X'\in\cP(\theta_1)$. But $\theta_0(X')<0$. So, $X'\notin \cP(\theta_0)$. So $\cP(\theta_0)\neq \cP(\theta_1)$.
\end{proof}
}

{
\begin{rem}\label{rem: path components are convex}%
The proof of Theorem \ref{thm: P(theta) are different in different path components} shows that the path components of the complement of $\cW(\cG)$ are convex. Otherwise the straight line connecting two points would pass through some walls making the value of $\cP(\theta_t)$ different at the two endpoints contradicting Theorem \ref{thm: P(theta) is constant on path components}.
\end{rem}
}

{
Remark \ref{rem: path components are convex} can be formalize into the following theorem.
\begin{thm}\label{thm: when P(theta) is constant}
Given $\theta_0,\theta_1\in V_\Lambda$, $\cP(\theta_0)=\cP(\theta_1)$ if and only if the following are satisfied.
\begin{enumerate}
	\item The linear path $\theta_t=t\theta_1+(1-t)\theta_0$ does not cross any walls transversely in its interior. By this we mean that, if $\theta_t\in D_\cG(M)$ for some $0<t<1$ then the entire path $\theta_t$ lies in the hyperplane perpendicular to $\undim M$. I.e., $\theta_t(M)=0$ for all $t\in [0,1]$.
	\item If $\theta_0\in D_\cG(L)$ for some $L\in\cG$ then
	\[
		\frac d{dt}\theta_t(L)|_{t=0}\le0.
	\]
	\item If $\theta_1\in D_\cG(R)$ for some $R\in\cG$ then
	\[
		\frac d{dt}\theta_t(R)|_{t=0}\ge0.
	\]
\end{enumerate}
\end{thm}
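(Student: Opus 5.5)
The proof will reduce everything to the sets $\cU(M)$ of Definition \ref{defn: U(M)}. By definition, $\cP(\theta_0)=\cP(\theta_1)$ if and only if, for every $M\in\cG$, either both endpoints or neither endpoint of the segment lie in $\cU(M)$. The tools are that $\cU(M)$ is open and convex, and that by Lemma \ref{lem: dU(M) is union of D(M')}(c) its boundary lies in the union of walls $D_\cG(\overline M)$ for strict quotients $\overline M$ of $M$. Since $\theta_t$ is linear in $t$, each $\theta_t(N)$ is affine in $t$. So ``crossing transversely'' simply means that $\theta_t(N)$ is nonconstant, and then its derivative has a constant nonzero sign along the whole segment.

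\emph{Sufficiency.} Assume (1)--(3) and suppose some $M$ lies in $\cP$ at one endpoint only.

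\textbf{Case $M\in\cP(\theta_0)\setminus\cP(\theta_1)$.} By convexity and openness, $\{t:\theta_t\in\cU(M)\}$ is an interval $[0,t^*)$ with $0<t^*\le 1$. The point $\theta_{t^*}$ lies in $\overline\cU(M)\setminus\cU(M)$, so by Lemma \ref{lem: dU(M) is union of D(M')}(c) it lies in $D_\cG(\overline M)$ for a strict quotient $\overline M$ of $M$. Then $\theta_0(\overline M)>0=\theta_{t^*}(\overline M)$, so $\theta_t(\overline M)$ is strictly decreasing. If $t^*<1$ this contradicts (1). If $t^*=1$, then $\theta_1\in D_\cG(\overline M)$ with negative derivative, which contradicts (3).

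\textbf{Case $M\in\cP(\theta_1)\setminus\cP(\theta_0)$.} Here the interval is $(s^*,1]$, and the same argument applies to $\theta_{s^*}$. If $s^*>0$ we contradict (1). If $s^*=0$, then $\theta_0\in D_\cG(\overline M)$ with positive derivative, contradicting (2).

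\emph{Necessity.} I would prove the contrapositive: if any of (1)--(3) fails, produce an object lying in $\cP$ at exactly one endpoint. Among all ``bad'' wall incidences, choose the object $X$ of minimal length. The bad incidences are:
\begin{itemize}
\item a point $\theta_{t_0}\in D_\cG(X)$ with $0<t_0<1$ where $\theta_t(X)$ is nonconstant;
\item $\theta_0\in D_\cG(X)$ with positive derivative;
\item $\theta_1\in D_\cG(X)$ with negative derivative.
\end{itemize}
By Proposition \ref{prop: dD in int} and minimality, the relevant point lies in $int\,D_\cG(X)$. By symmetry, assume $\theta_t(X)$ is increasing. Then $\theta_0(X)\le 0$ (strictly $<0$ if $t_0>0$), so $X\notin\cP(\theta_0)$. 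I then claim $X\in\cP(\theta_1)$.

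To prove the claim, I would argue as in the transverse case of Theorem \ref{thm: P(theta) are different in different path components}. If some nonzero strict quotient $X'$ satisfies $\theta_1(X')\le 0$, take $X'$ minimal with this property. Interiority gives $\theta_{t_0}(X')>0$, so $\theta_t(X')$ decreases to a zero at some $t_1\in(t_0,1]$, and $\theta_{t_1}\in D_\cG(X')$ by minimality of $X'$. This is a bad incidence for the smaller object $X'$: it is interior-transverse if $t_1<1$, and an endpoint with negative derivative if $t_1=1$. Either way it contradicts the minimality of $X$. In the degenerate situation where (1) fails only because $t_0$ is an endpoint case, the same bookkeeping applies.

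The main obstacle is the necessity direction, specifically making this minimality induction airtight. It requires checking that every auxiliary wall produced really is a bad incidence of one of the three types, including edge cases where the auxiliary zero lands exactly at $t=0$ or $t=1$, and where $X'=X$. The sufficiency direction is essentially formal once Lemma \ref{lem: dU(M) is union of D(M')} is available.
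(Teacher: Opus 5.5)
Your argument is sound and follows the paper's route, made precise. Sufficiency is the $\cU(M)$ argument from the proof of Theorem~\ref{thm: P(theta) is constant on path components}, with the endpoint cases $t^*=1$ and $s^*=0$ absorbed by conditions (3) and (2). Unlike the paper's appeal to Remark~\ref{rem: path components are convex}, it also applies verbatim when the segment runs inside a wall. Necessity is the minimal-wall argument from the transverse case of Theorem~\ref{thm: P(theta) are different in different path components}. The edge cases you worry about are harmless. The zero $t_1\in(t_0,1]$ is never $0$. If $t_1=1$, that is a failure of (3) for a shorter object. And $X'\neq X$ because $\theta_1(X)>0$.

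The one step whose stated justification does not work is the claim that the minimal bad incidence lies in $int\,D_\cG(X)$. Proposition~\ref{prop: dD in int} only produces some strict subobject or quotient whose wall contains the point in its interior. It says nothing about the derivative of $\theta_t$ on that object. For example, at $\theta_0$ with $\frac{d}{dt}\theta_t(X)>0$, the object it supplies may have nonpositive derivative, so minimality of $X$ is not contradicted.

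What you need instead is additivity along a strict exact sequence. Suppose $\theta_{t_0}(X')=0$ for some nonzero proper strict subobject $X'\cof X$. Then $\theta_{t_0}\in D_\cG(X')$ and $\theta_{t_0}\in D_\cG(X/X')$, because strict subobjects of $X'$ and strict quotients of $X/X'$ are strict subobjects, resp.\ strict quotients, of $X$ by Theorems~\ref{thm: composition of cofibrations} and~\ref{thm: composition of fibrations}. Moreover
\[
\tfrac{d}{dt}\theta_t(X)=\tfrac{d}{dt}\theta_t(X')+\tfrac{d}{dt}\theta_t(X/X').
\]
If the left side is nonzero (resp.\ positive, negative), so is one of the summands. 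That strictly shorter object then has a bad incidence of the same type at the same time, contradicting minimality. With this replacement, interiority holds and the rest of your necessity argument goes through as written.
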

}

{\begin{proof}
    As observed in Remark \ref{rem: path components are convex}, if the path $\theta_t$ does not cross any walls transversely, then $\cP(\theta_t)$ is constant for $0<t<1$. For $t=0$, and object $L$ would be added to $\theta_0$ if the wall $D_\cG(L)$ was oriented in the other direction (so that $\frac d{dt}\theta_t(L)|_{t=0}>0$). This not being the case, we have $\cP(\theta_0)=\cP(\theta_{1/2})$. Similarly for the case $t=1$. So, $\cP(\theta_t)$ is constant for $t\in[0,1]$ assuming $(1),(2),(3)$.

    Conversely, suppose that the path $\theta_t$ crosses some wall $D_\cG(X)$ at time $t=t_0$. Then $X$ or some strict quotient of $X$ would be added to $\cP(\theta_t)$ for $t>t_0$ (or for $t<t_0$) depending on the sign of $\frac d{dt}\theta_t(X)|_{t=t_0}$. So, $\cP(\theta_0)\neq \cP(\theta_1)$ if any of the conditions $(1),(2),(3)$ were violated.
\end{proof}
}

Similarly we have the following.

{
\begin{thm}\label{thm: when Q(theta) is constant}
Given $\theta_0,\theta_1\in V_\Lambda$, $\cQ(\theta_0)=\cQ(\theta_1)$ if and only if the following are satisfied.
\begin{enumerate}
	\item The linear path $\theta_t=t\theta_1+(1-t)\theta_0$ does not cross any walls transversely in its interior. 
	\item If $\theta_0\in D_\cG(L)$ for some $L\in\cG$ then
	\[
		\frac d{dt}\theta_t(L)|_{t=0}\ge0.
	\]
	\item If $\theta_1\in D_\cG(R)$ for some $R \in \cG$ then
	\[
		\frac d{dt}\theta_t(R)|_{t=0}\le0.
	\]
\end{enumerate}
\end{thm}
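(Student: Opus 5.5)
The plan is to mirror the argument for Theorem \ref{thm: when P(theta) is constant}, interchanging the roles of strict quotients and strict subobjects and reversing all inequalities. Equivalently, one can reduce to that theorem by duality. By Theorem \ref{thm: duality between P(theta)bar and Q(theta)} we have $\cQ(\theta)=\overline\cP(\theta)^\perp$, and by Theorem \ref{thm: duality between P,Q} the class $\overline\cP(\theta)$ is recovered as $\,^\perp\cQ(\theta)$. So $\cQ(\theta_0)=\cQ(\theta_1)$ if and only if $\overline\cP(\theta_0)=\overline\cP(\theta_1)$. I will argue directly, though, using the sets $\cV(M)$ and $\overline\cV(M)$ of Definition \ref{defn: U(M)} together with Lemma \ref{lem: dU(M) is union of D(M')}(b),(d).

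\emph{Sufficiency.} Assume (1), (2), (3). If the path meets no wall transversely in $(0,1)$, then for each $M$ the function $t\mapsto [\theta_t\in\cV(M)]$ can change value only when the path meets $\partial\cV(M)$. By Lemma \ref{lem: dU(M) is union of D(M')}(d), $\partial\cV(M)$ lies in a union of walls $D_\cG(M')$ with $M'$ a strict subobject of $M$. A point $\theta_t$ with $0<t<1$ lying on such a wall satisfies $\theta_s(M')=0$ for all $s$, by (1). Taking $M'$ minimal with $\theta_t(M')=0$, the quantities $\theta_s(N)$ for the strict subobjects $N\subset M'$ are linear in $s$ and do not vanish at $t$, so they keep a constant sign nearby. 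It follows that membership $M\in\cQ(\theta_s)$ is locally constant on $(0,1)$, hence constant there.

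At the endpoint $t=0$, suppose $\theta_0\in D_\cG(L)$. Then $L\in\overline\cQ(\theta_0)\setminus\cQ(\theta_0)$. If $\frac d{dt}\theta_t(L)|_{t=0}$ were negative, $L$ (or some strict subobject of it) would enter $\cQ(\theta_t)$ for small $t>0$. Condition (2) forbids this, so $\cQ(\theta_0)=\cQ(\theta_{1/2})$. The same argument at $t=1$, using (3), gives $\cQ(\theta_{1/2})=\cQ(\theta_1)$.

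\emph{Necessity.} If (1) fails, let $\theta_{t_0}\in D_\cG(X)$ with $\frac d{dt}\theta_t(X)|_{t_0}\ne0$, and take $X$ minimal, so that $\theta_{t_0}\in int\,D_\cG(X)$ by Proposition \ref{prop: dD in int}. Suppose the derivative is negative. Then $X\in\cQ(\theta_1)$, or some strict subobject $X'$ with $\theta_0(X')\ge 0$ lies in $\cQ(\theta_1)\setminus\cQ(\theta_0)$. The case $X\notin\cQ(\theta_1)$ is handled exactly as in the proof of Theorem \ref{thm: P(theta) are different in different path components}: a minimal offending strict subobject $X'$ yields a later wall $D_\cG(X')$, which contradicts minimality. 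We may also choose $t_0$ to be the first transverse crossing. The positive-derivative case is symmetric, with roles of $\theta_0$ and $\theta_1$ swapped. Parallel crossings are handled as in the second case of that proof.

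If (2) fails, then $L$ lies in $\overline\cQ(\theta_0)$ and $\theta_t(L)<0$ for small $t>0$. Replacing $L$ by a minimal strict subobject on which $\theta_0$ vanishes puts us in $int\,D_\cG(L)$, so that subobject lies in $\cQ(\theta_t)$ for small $t$ but not in $\cQ(\theta_0)$. If (1) holds this difference persists to $t=1$, and if (1) fails we are in the previous case. Failure of (3) is treated symmetrically.

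The main obstacle is the necessity direction. We must make sure that an object gained or lost at a transverse crossing is not subsequently lost or regained before reaching the endpoints. The plan is to choose the crossing and the object minimally, as in Theorem \ref{thm: P(theta) are different in different path components}, and to use linearity of $t\mapsto\theta_t(N)$ so that each sign changes at most once along the path.
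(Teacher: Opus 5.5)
Your sufficiency direction is essentially the paper's argument: the paper proves this theorem only by ``similarly'', mirroring Theorem \ref{thm: when P(theta) is constant}. Your necessity direction, however, contains a step that fails as written.

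\textbf{The replacement step in case (2).} When (2) fails, you replace $L$ by a minimal strict subobject on which $\theta_0$ vanishes and assert that it enters $\cQ(\theta_t)$ for small $t>0$. That subobject can have derivative of the wrong sign. For an example, take $\cG=mod\text-\Lambda$ for a quiver of type $A_2$. Label the simples so that the indecomposable $M$ of length two has socle $S_2$ and top $S_1$, and write $\theta=(a,b)$ with $a=\theta(S_1)$, $b=\theta(S_2)$. Let $\theta_0=0$ and $\theta_1=(-2,1)$. Then (1) and (3) hold, and (2) fails for $L=M$ since $\frac{d}{dt}\theta_t(M)=-1$. Your recipe produces $S_2$, but $\theta_t(S_2)=t>0$, so neither $S_2$ nor $M$ ever enters $\cQ(\theta_t)$. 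The object that distinguishes the endpoints is the \emph{quotient} $S_1\in\cQ(\theta_1)\setminus\cQ(\theta_0)$.

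The correct move is to take a strict filtration of $L$ with subquotients in $\cW_0(\theta_0)$ (Lemma \ref{lem: filtration of W(theta)}). Since the derivative is additive, some subquotient $Y$ has negative derivative, and $\theta_0\in int\,D_\cG(Y)$ then gives $Y\in\cQ(\theta_t)\setminus\cQ(\theta_0)$ for small $t>0$. The same problem affects your appeal to Proposition \ref{prop: dD in int} in case (1): the strict subobject it supplies may be parallel to the path. For instance, take $\theta_t=(t-\tfrac12,0)$ in the same example, at $t=\tfrac12$ with $X=M$; the subobject obtained is $S_2$, whose derivative is $0$. So there too you need a $\cW_0$-subquotient with nonzero derivative.

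\textbf{Reaching the endpoints.} The second gap is showing that the difference survives to the endpoints. In case (1) you borrow the argument of Theorem \ref{thm: P(theta) are different in different path components}. There, the endpoints lie on no wall, which forces the later wall $D_\cG(X')$ to occur at some $t_1<1$. Here $\theta_1$ may lie on walls, so $t_1=1$ is possible; that is just a failure of (3), and then nothing contradicts minimality. Also, ``$X$ minimal'' and ``$t_0$ the first transverse crossing'' cannot both be imposed; the contradiction needs $X$ of minimal length among all interior transverse crossings. Similarly, ``if (1) holds this difference persists to $t=1$'' is asserted, not proved.

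A uniform repair goes as follows. Since $\cV(Y)$ is convex, $\{t: Y\in\cQ(\theta_t)\}$ is an interval. Suppose $Y$ enters at $\tau$ with $\theta_\tau\in int\,D_\cG(Y)$ and negative derivative.
\begin{enumerate}
\item If the interval reaches $1$, then $Y$ lies in exactly one of $\cQ(\theta_0)$, $\cQ(\theta_1)$.
\item Otherwise, at its right end $b\le 1$, take a minimal strict subobject $Y'$ of $Y$ with $\theta_b(Y')=0$. Then $\theta_b\in int\,D_\cG(Y')$, the derivative of $\theta_t(Y')$ is positive, and $Y'\in\cQ(\theta_\tau)$ because it is a nonzero proper strict subobject of $Y$.
\item Hence either $Y'\in\cQ(\theta_0)\setminus\cQ(\theta_1)$, or $Y'$ enters at some $a\in[0,\tau)$. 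In the latter case, a minimal strict subobject of $Y'$ vanishing at $\theta_a$ lies in $\cQ(\theta_b)$, and one repeats with the roles of entering and exiting reversed.
\end{enumerate}
Lengths strictly decrease at each step, so this ends with an object lying in exactly one of $\cQ(\theta_0)$, $\cQ(\theta_1)$. Starting at $\tau=0$ handles failure of (2). Starting at $\tau=t_0$ handles failure of (1), with the mirror-image chain when the derivative there is positive. Failure of (3) is symmetric.

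Finally, parallel crossings do not violate (1), so there is nothing to handle for them.
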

}

\begin{cor}\label{cor: P,Q constant on thin walls}
    For $\theta$ on any thin wall, $\cP(\theta)$ and $\cQ(\theta)$ are constant.
\end{cor}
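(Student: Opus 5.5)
To prove Corollary \ref{cor: P,Q constant on thin walls}, I will apply Theorems \ref{thm: when P(theta) is constant} and \ref{thm: when Q(theta) is constant} to short linear segments inside a thin wall, and then pass from short segments to arbitrary paths by a connectedness argument.

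Fix a thin wall and a path $\theta_t$, $t\in[0,1]$, in it. By Proposition \ref{prop: W(theta) constant on thin wall}, $\cW(\theta_t)$ is constant, generated by a single pseudo-brick $B$. So every $\theta_t$ lies in $D_\cG(B)$, and every $M$ with $\theta_t\in D_\cG(M)$ is an iterated strict self-extension of $B$. In particular, $\undim M$ is a positive multiple of $\undim B$.

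The key local claim is this. If $\theta_a$ and $\theta_b$ are two points of the thin wall such that the whole segment $\sigma_s=s\theta_b+(1-s)\theta_a$ lies in the same thin wall, then $\cP(\theta_a)=\cP(\theta_b)$ and $\cQ(\theta_a)=\cQ(\theta_b)$. I will check the three conditions of Theorem \ref{thm: when P(theta) is constant}.

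\begin{enumerate}
\item If $\sigma_s\in D_\cG(M)$, then $M$ is a self-extension of $B$. Hence $\sigma_s(M)$ is a multiple of $\sigma_s(B)$, which is $0$ for all $s$. So the segment lies in the hyperplane $\undim M^\perp$ and crosses no wall transversely.
\item For the same reason, at the endpoint $\theta_a$ every wall $D_\cG(L)$ containing $\theta_a$ has $L$ a self-extension of $B$. Then $\frac{d}{ds}\sigma_s(L)=0$, so the required inequality holds with equality.
\item The argument at $\theta_b$ is identical.
\end{enumerate}

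Theorem \ref{thm: when Q(theta) is constant} gives the statement for $\cQ$ in exactly the same way.

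To globalize, I need to handle the fact that the thin wall need not be convex. Since $\theta_t\in int\,D_\cG(B)$ is only assured up to boundary points, I will argue as follows. Let $S\subseteq[0,1]$ be the set of $t$ with $\cP(\theta_t)=\cP(\theta_0)$. The plan is to show that $S$ is both open and closed, using the local claim on small segments.

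The main obstacle is producing those segments: near $\theta_{t_0}$, the straight segment to a nearby $\theta_t$ must stay inside the thin wall. Two things are needed for this.

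\begin{itemize}
\item The segment must stay in $D_\cG(B)$. This holds because $D_\cG(B)$ is convex.
\item The segment must not pick up new semistable objects. Here I would first try to avoid the issue by going around locality entirely. Any $\theta$ on a thin wall with generator $B$ has $\cP(\theta)$ determined directly: $M\in\cP(\theta)$ if and only if $\theta(M'')>0$ for all nonzero strict quotients $M''$ of $M$. For two points $\theta,\theta'$ of the wall, I would compare $\cP(\theta)$ and $\cP(\theta')$ via $\cP(\theta)=\,^\perp\overline\cQ(\theta)$, using Theorem \ref{thm: duality between P(theta) and Q(theta)bar}, and the fact that $\overline\cP(\theta)=Filt(\cP(\theta)\cup\{B\})$.
\end{itemize}

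If this shortcut fails, I fall back on the following. Any segment leaving the thin wall would enter the thick set or a different thin wall. If it entered a different thin wall, Proposition \ref{prop: W(theta) constant on thin wall} gives a contradiction. If it entered the thick set, I would perturb the segment within the path component.

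Given the segments, closedness and openness of $S$ follow, so $S=[0,1]$ and $\cP(\theta_t)$, $\cQ(\theta_t)$ are constant along the path.
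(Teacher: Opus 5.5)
\paragraph{Verdict.} Your local claim is correct, and it is what the paper has in mind. The corollary is stated without proof right after Theorems \ref{thm: when P(theta) is constant} and \ref{thm: when Q(theta) is constant}. The implicit point is exactly your check that every wall through a point of a thin wall is labeled by an iterated strict self-extension of $B$, hence is parallel to $D_\cG(B)$.

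\paragraph{The gap.} The globalization, which you identify as the main obstacle, is never actually carried out. Openness and closedness of $S$ need, for each $t_0$ and all $t$ near $t_0$, a straight segment from $\theta_{t_0}$ to $\theta_t$ that stays in the thin wall. Convexity of $D_\cG(B)$ only keeps the segment inside $D_\cG(B)$. The segment can still leave the thin wall by passing through points of $D_\cG(B)$ that lie on some other wall $D_\cG(M)$, with $M$ not a self-extension of $B$. Since $\cW(\cG)$ is a countable union of closed walls, nothing prevents such points from accumulating at a point of the thin wall. So nothing in the paper makes a thin wall locally convex, and ``perturb the segment within the path component'' is not an argument.

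Your shortcut via $\cP(\theta)=\,^\perp\overline\cQ(\theta)$ and $Filt(\cP(\theta)\cup\{B\})$ has a different problem: it never uses that $\theta$ and $\theta'$ are joined by a path of thin points. That hypothesis is essential. Points of $D_\cG(B)$ on opposite sides of a wall crossing $D_\cG(B)$ transversely generally have different $\cP$ (cf.\ Theorem \ref{thm: when P(theta) is constant}).

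\paragraph{The missing idea.} Avoid chords entirely and run the exit-point argument of Theorem \ref{thm: P(theta) is constant on path components} along the given path itself.
\begin{enumerate}
\item Let $\theta_t$, $t\in[0,1]$, lie in the thin wall. By Proposition \ref{prop: W(theta) constant on thin wall} the generator $B$ is constant, so $\theta_t(B)=0$ for all $t$.
\item Suppose $M\in\cP(\theta_0)\setminus\cP(\theta_1)$. The set of $t$ with $\theta_t\notin\cU(M)$ is closed (since $\cU(M)$ is open), nonempty, and does not contain $0$. So it has a least element $t^*>0$.
\item By continuity, $\theta_{t^*}(M'')\ge0$ for every strict quotient $M''$ of $M$, with equality for some nonzero $M''$.
\item Strict quotients of $M''$ are strict quotients of $M$ (Theorem \ref{thm: composition of fibrations}), so $M''\in\cW(\theta_{t^*})$.
\item Since $\theta_{t^*}$ lies on the thin wall, $M''$ is an iterated strict self-extension of $B$. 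Hence $\undim M''$ is a positive multiple of $\undim B$, and $\theta_0(M'')=0$. This contradicts $M\in\cP(\theta_0)$.
\end{enumerate}
Reversing the path gives the other inclusion. The same argument with $\cV(M)$ and strict subobjects (Theorem \ref{thm: composition of cofibrations}) gives constancy of $\cQ$. This uses only the path, needs no convexity, and does not require Theorems \ref{thm: when P(theta) is constant} or \ref{thm: when Q(theta) is constant} at all.
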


\begin{cor}\label{cor: P,Q equal on convex sets}
    Given any pseudo-torsion class $\cP$, the set $S(\cP)$ of all $\theta\in V_\Lambda$ so that $\cP(\theta)=\cP$ is convex. Similarly, for any pseudo-torsionfree class $\cQ$, the set $T(\cQ)$ of all $\theta\in V_\Lambda$ so that $\cQ(\theta)=\cQ$ is convex.
\end{cor}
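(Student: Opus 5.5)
The plan is to derive convexity of $S(\cP)$ directly from Theorem \ref{thm: when P(theta) is constant}, which characterizes equality $\cP(\theta_0)=\cP(\theta_1)$ by conditions (1)--(3) on the linear path $\theta_t=t\theta_1+(1-t)\theta_0$. Fix $\theta_0,\theta_1\in S(\cP)$ and $s\in(0,1)$, and put $\theta_s=s\theta_1+(1-s)\theta_0$. Convexity means $\theta_s\in S(\cP)$. Since $\cP(\theta_0)=\cP(\theta_1)=\cP$, the ``only if'' direction of the theorem says the full segment from $\theta_0$ to $\theta_1$ satisfies (1), (2), (3). We then apply the ``if'' direction to the subsegment $[\theta_0,\theta_s]$. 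Its linear parametrization is $\psi_u=\theta_{us}$ for $u\in[0,1]$, with $\frac{d}{du}\psi_u=s\frac d{dt}\theta_t$.

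The key step is checking (1)--(3) for $\psi$. For (1), suppose $\psi_u\in D_\cG(M)$ with $0<u<1$. Then $\theta_{us}$ is an interior point of the big segment lying on a wall, so (1) for $\theta$ gives $\theta_t(M)=0$ for all $t$; in particular $\psi_u(M)=0$ for all $u$. For (2), the starting point is still $\theta_0$, and the derivative along $\psi$ is $s>0$ times the derivative along $\theta$, so the sign condition is preserved.

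The only new point is (3) at the endpoint $\psi_1=\theta_s$. Suppose $\theta_s\in D_\cG(R)$. Since $0<s<1$, this is an interior point of the big segment on a wall, so by (1) $\theta_t(R)=0$ identically. Hence $\frac d{du}\psi_u(R)=0\ge0$ and (3) holds. The theorem then gives $\cP(\theta_s)=\cP(\theta_0)=\cP$, as required.

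I expect this endpoint case to be the only delicate point: a wall through $\theta_s$ could a priori be approached from the wrong side. Condition (1) for the full segment rules this out, because any wall through an interior point contains the entire line, so the derivative vanishes and the orientation condition is trivially satisfied. The statement for $T(\cQ)$ is proved identically using Theorem \ref{thm: when Q(theta) is constant}, where the inequalities in (2) and (3) are reversed but are still satisfied, since the endpoint derivative is again $0$ and positive rescaling by $s$ preserves sign.
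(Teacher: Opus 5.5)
Your argument is correct and is essentially the paper's proof. The paper also derives the corollary directly from Theorem \ref{thm: when P(theta) is constant} (and Theorem \ref{thm: when Q(theta) is constant} for $T(\cQ)$), simply asserting that it gives $\cP(\theta_t)=\cP$ for all $t\in[0,1]$; your restriction to the subsegment from $\theta_0$ to $\theta_s$, with condition (3) at the new endpoint forced by condition (1) for the full segment, is exactly the step that justifies this from the theorem as stated. One small wording point: condition (1) places the whole segment in the hyperplane $\theta(R)=0$, not inside the wall $D_\cG(R)$ itself, but $\theta_t(R)\equiv 0$ is all your argument uses.
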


\begin{proof} Let $\theta_0,\theta_1\in S(\cP)$. Then Theorem \ref{thm: when P(theta) is constant} implies that, for all $t\in[0,1]$, $\cP(\theta_t)=\cP$. Therefore, $S(\cP)$ is convex. Similarly, $T(\cQ)$ is convex by Theorem \ref{thm: when Q(theta) is constant}.
\end{proof}

\subsection{Examples}\label{ss1.3:examples}

We compute the pseudo-wall and chamber structure for two examples of torsion classes and compare them with the full wall and chamber structures given all modules: (a) a torsion class for the modulated quiver of type $B_2/C_2$. (b) a torsion class for a quiver of type $A_3$.

In the following examples and later in the text we use the following definitions.

\begin{defn}
    Let $\cX$ be any collection of objects in $\cG$. By the pseudo-torsion class \textbf{generated} by $\cX$ we mean the intersection of all pseudo-torsion classes containing $\cX$.
\end{defn}

By Theorem \ref{thm: duality between P,Q}, the pseudo-torsion class generated by $\cX$ is given by $\,^\perp(\cX^\perp)$ and by Proposition \ref{prop: filt X}, it is also $Filt(\cX)$.

\subsubsection{$B_2$ example}\label{B2 example}
{The first example is the modulated quiver $\CC\to\RR$. The projective modules are $P_2:0\to\RR$ and $P_1:\CC\to \RR^2$. The injective modules are $I_1:\CC\to0$, $I_2:\CC\to\RR$. The Auslander-Reiten quiver is:
\[
\xymatrix{
& \bf P_1\ar[dr]\ar@{--}[rr] && \bf I_1\\
P_2\ar[ur]\ar@{--}[rr]&& \bf I_2\ar[ur]
	} 
\]
We consider the torsion class $\cG=\prescript{\perp}{}P_2=\{P_1,I_2,I_1\}$. In the strict category, we claim that there are no nonzero strict morphisms between the objects $P_1,I_2,I_1$. The proof is similar to the calculation in {Example \ref{eg: A3 example}}. Consider the example of the main diagram:
\[
\xymatrixrowsep{12pt}\xymatrixcolsep{12pt}
\xymatrix{
& 0\ar[d] & 0\ar[d] & 0\ar[d]\\
0\ar[r]& S_2\ar[d]\ar[r] &
	I_2\ar[d]\ar[r] &
	I_1\ar[d]^=\ar[r]
	& 0\\
0\ar[r]& P_1\ar[d]\ar[r] &
	I_2\oplus I_2\ar[d]\ar[r] &
	I_1\ar[d]\ar[r]
	& 0\\
0\ar[r]& I_2\ar[d]\ar[r]^= &
	I_2\ar[d]\ar[r] &
	0\ar[d]\ar[r]
	& 0\\
& 0 & 0 & 0	} 
\]
where the middle row is an almost split sequence, the middle column is a split exact sequence, and the other objects are uniquely determined. For example, in the upper left corner we have the kernel of $I_2\to I_2\oplus I_2\to I_1$ which is $S_2$. Since $S_2$ is not in the torsion class, neither $P_1$ nor $I_2$ is a strict subobject of $I_2\oplus I_2$. Consequently, $I_2\oplus I_2$ has no strict quotients. Similarly, $mI_2$ has no nontrivial strict subobjects or quotient objects. So, $mI_2$ is indecomposable in the strict category. On the other hand, direct summands of $mP_1$ and $mI_1$ are strict subobjects since there are no other subobjects. 

The indecomposable objects $P_2,I_2,I_1$ have no strict quotients. This means that the walls $D_\cG(P_2),D_\cG(I_2),D_\cG(I_1)$ are hyperplanes, i.e., line in $\RR^2$. The wall $D_\cG(m I_2)$ is also a hyperplane which, as a set, is equal to $D_\cG(I_2)$.
}

{
\begin{figure}[htbp]
\begin{center}
\begin{tikzpicture}[scale=1.5]
\begin{scope}[xshift=-5cm]
\draw[thick] (2,0)--(-2,0); 
\draw[thick] (0,1)--(0,-1); 
\draw[thick] (1,-1)--(0,0); 
\draw[thick] (2,-1)--(0,0); 
\draw (1,-1) node[below]{$D(I_2)$};
\draw (0,-1) node[below]{$D(I_1)$};
\draw (2,-1) node[below]{$D(P_1)$};

\coordinate (L0) at (-1,-.5);
\coordinate (L1) at (.3,-.8);
\coordinate (L2) at (1.2,-.8);
\coordinate (L3) at (1.3,-.3);
\coordinate (L5) at (-1.2,.8);
\coordinate (L4) at (1,.8);
\foreach \x in {L0,L1,L2,L3,L5}
\draw(\x)circle[radius=2mm];
\draw (L0)node{$\cG_0$};
\draw (L1)node{$\cG_1$};
\draw (L2)node{$\cG_2$};
\draw (L3)node{$\cG_3$};
\draw (L4)node{$mod\text-\Lambda$};
\draw (L5)node{$\cG_5$};
\end{scope}
%
\draw[thick] (0,1)--(0,-1); 
\draw[thick] (1,-1)--(-1,1); 
\draw[very thick,blue] (1,-1)--(-1,1); 
\draw[thick] (2,-1)--(-2,1); 
\draw (-1,1) node[above]{$D_\cG(I_2)$};
\draw (0,1) node[above]{$D_\cG(I_1)$};
\draw (-2,1) node[above]{$D_\cG(P_1)$};
\draw[blue] (1,-1) node[below]{$D_\cG(m I_2)$};

\coordinate (L0) at (-1,-.5);
\coordinate (L1) at (.3,-.8);
\coordinate (L2) at (1.2,-.8);
\coordinate (L3) at (1,.5);
\coordinate (L5) at (-1.2,.8);
\coordinate (L4) at (-.3,.8);
\foreach \x in {L0,L1,L2,L3,L4,L5}
\draw(\x)circle[radius=2mm];
\draw (L0)node{$\cP_0$};
\draw (L1)node{$\cP_1$};
\draw (L2)node{$\cP_2$};
\draw (L3)node{$\cP_3$};
\draw (L4)node{$\cP_4$};
\draw (L5)node{$\cP_5$};

\end{tikzpicture}
\caption{The wall-and-chamber structure for $mod\text-\Lambda$ is shown on the left. On the right we have that of $\cG=\cG_3$. The pseudo-torsion classes $\cP_0,\cP_1,\cP_2,\cP_3$ are equal to the torsion classes $\cG_0,\cG_1,\cG_2,\cG_3=\cG$ on the left. These are given by $\cG_0=\{0\}$, 
$\cG_1=add(I_1)$, $\cG_2=add(I_1,I_2)$ and $\cG_3=\cG=add(I_1,I_2,P_1)$. Also, $\cG_5=add(P_2)$. The pseudo-torsion classes $\cP_4,\cP_5$ are discussed below.
}
\label{fig: eg, B2}
\end{center}
\end{figure}

In Figure \ref{fig: eg, B2}, the pseudo-torsion classes $\cP_4,\cP_5$ are left perpendicular classes of the ``green'' walls on their right and generated by the ``red'' walls on their left. (See the discussion of red and green walls in Section \ref{ss23: p-tor for walls}.)

$\cP_4=\{nP_2, nP_2\oplus mI_2\}= \prescript{\perp}{} I_1$.

$\cP_5= add(P_2)=\prescript{\perp}{}(I_1\oplus I_2)$.
}

We call a pseudo-wall $\cX$ such as $D_\cG(mI_2)$ a \textbf{quasi-thin wall}. This means all the objects $M$ so that $D_\cG(M)=\cX$ are all $\Lambda$-direct sums of the same object $X$ even though the objects $mX$ are indecomposable in the strict category $\cG$.

{
\subsubsection{A torsion class in $A_3$}\label{eg: A3 example stability}
This example has rank 3. It is the torsion class $\,^\perp S_2$ for the quiver $1\to 2\to 3$. The Auslander Reiten quiver, with torsion class $\cG$ circled is given as follows.
\begin{center}
\begin{tikzpicture}[scale=1.2]
{ 
\begin{scope}[xshift=4cm,yshift=-.5cm]
\draw (0,0) node{$S_3$}; 
\draw(0,0)circle[radius=2.4mm];
\draw (0.5,0.6) node{$P_2$}; 
\draw (1.5,0.6) node{$I_2$};
\draw(1.5,0.6)circle[radius=2.4mm];
\draw (1,1.2) node{$P_1$};
\draw(1,1.2)circle[radius=2.4mm];
\draw (1,0) node{$S_2$}; 
\draw (2,0) node{$S_1$}; 
\draw(2,0)circle[radius=2.4mm];
\draw[->] (0.13,0.15)--(.33,.4);
\draw[->] (1.13,0.15)--(1.33,0.4);
\draw[->] (0.63,0.75)--(.83,1);
\draw[->] (0.63,0.37)--(.83,.14);
\draw[->] (1.63,.37)--(1.83,.14);
\draw[->] (1.13,.97)--(1.33,.74);
\end{scope}
}
\end{tikzpicture}
\end{center}
Thus, there are four indecomposable objects $S_1$, $P_1$, $I_2$, $S_3$. In Figure \ref{Fig: A3 example}, we draw the stereographic projection to $\RR^2$ of the intersection of walls with the unit sphere $S^2$ as in \cite{MoreGhosts}, \cite{IT14}.

\begin{figure}[htbp]
\begin{center}
\begin{tikzpicture}

\begin{scope}
\coordinate (A) at (-1.25,-2);
\coordinate (B) at (1.95,-.5);
\draw (0,0) ellipse[x radius= 2cm,y radius=1.73cm];
\draw[fill,white] (-1,0) circle[radius=2cm];
\draw[very thick] (-1,0) circle[radius=2cm];
\draw (1,0) circle[radius=2cm];
\draw[very thick] (0,-2) ellipse[x radius=3cm,y radius=2cm];
\draw[very thick] (A)..controls (1,-3.5) and (2,-2)..(B);
\coordinate (G0) at (-3,-3.5);
\draw (G0) node{$\cG_0$};
\coordinate (G1) at (-1.5,-3);
\draw (G1) node{$\cG_1$};

\coordinate (G2) at (-1.7,.8);
\draw (G2) node{$\cG_2$};
\coordinate (G3) at (-2,-1);
\draw (G3) node{$\cG_3$};
\coordinate (G6) at (-.9,-1.5);
\draw (G6) node{$\cG_6$};
\coordinate (G9) at (0,-.6);
\draw (G9) node{$mod\text-\Lambda$};
\coordinate (G4) at (.5,-2.9);
\draw (G4) node{$\cG_4$};
\coordinate (G5) at (0.2,-2.2);
\draw (G5) node{$\cG_5$};

\coordinate (D1) at (2.8,-3.5);
\draw (D1) node{$D(S_1)$};
\coordinate (D2) at (-3,1.5);
\draw (D2) node{$D(S_3)$};

\coordinate (D3) at (0,-3.5);
\draw (D3) node{$D(I_2)$};

\coordinate (D4) at (1.4,-2.5);
\draw (D4) node{$D(P_1)$};

\clip (0,-2) ellipse[x radius=3cm,y radius=2cm];
\draw[very thick] (.65,-1.3) circle[radius=2.04cm];

\end{scope}

\begin{scope}[xshift=8cm];
\draw[blue](1.2,0) node{$a$};
\draw[red] (2,-.6)node[left]{$b$};

{
\coordinate (A) at (-1.2,-2);
\coordinate (B) at (1.95,-.5);
\coordinate (C) at (0,1.73);
\draw[very thick] (-1,0) circle[radius=2cm];
\draw[very thick] (0,-2) ellipse[x radius=3cm,y radius=2cm];
\draw[very thick] (A)..controls (1,-3.5) and (2,-2)..(B)..controls (2,0.7) and (1,1.7)..(C);
\coordinate (G0) at (-3,-3.5);
\draw (G0) node{$\cP_0$};
\coordinate (G1) at (-1.5,-3);
\draw (G1) node{$\cP_1$};

\coordinate (G2) at (-1.7,.9);
\draw (G2) node{$\cP_2$};
\coordinate (G3) at (-2,-1);
\draw (G3) node{$\cP_3$};

\coordinate (G6) at (-.3,-.7);
\draw (G6) node{$\cP_6=\cG_6$};
\draw (-.3,-1.3) node{$=\cG$};

\draw (0,.9) node{$\cP_7$};
\draw (1.1,.9) node{$\cP_8$};
\draw (1.9,.9) node{$\cP_9$};

\coordinate (G9) at (0,-.6);
\coordinate (G4) at (.5,-2.9);
\draw (G4) node{$\cP_4$};
\coordinate (G5) at (0.2,-2.2);
\draw (G5) node{$\cP_5$};

\coordinate (D1) at (2.8,-3.5);
\draw (D1) node{$\tiny D_\cG(S_1)$};
\coordinate (D2) at (-3,1.5);
\draw (D2) node{$\tiny D_\cG(S_3)$};

\coordinate (D3) at (0,-3.5);
\draw (D3) node{$\tiny D_\cG(I_2)$};

\coordinate (D4) at (1.2,-1.7);
\draw (D4) node{$\tiny D_\cG(P_1)$};

\draw[very thick] (.65,-.8) ellipse[x radius=2.04cm, y radius=2.7cm];
}
\end{scope}

\end{tikzpicture}
\caption{The full picture for $mod\text-\Lambda$ is on the left. The torsion classes $\cG_i$ contained in $\cG=\cG_6$ for $i\le 6$ are equal to the pseudo-torsion classes $\cP_i$ for $i\le 6$ on the right. There are three more pseudo-torsion classes $\cP_7,\cP_8,\cP_9$ which are all characterized as the left perpendicular of the green walls of their chamber and, also, they are generated by the red walls. For example, $\cP_7=S_1^\perp$ is generated by $I_2$ and $S_3$. Vertices $\color{blue}a$ and $\color{red}b$ are ``ghosts'' explained in Example \ref{eg: A3 ghosts}.
}
\label{Fig: A3 example}
\end{center}
\end{figure}
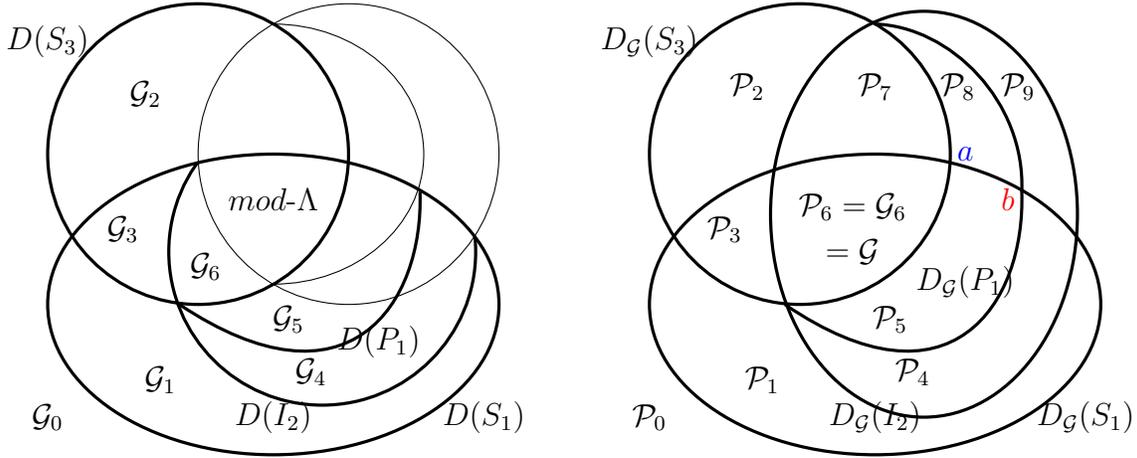
}

\subsubsection{Ghosts in $A_3$}\label{eg: A3 ghosts}
 The next example is the stability diagram for Example \ref{eg: A3 example}. This example is slightly different from Example \ref{eg: A3 example stability}. We will see ``ghosts''. This example comes from Definition 2.3 and Figure 4 in \cite{GrInvRedrawn} where the study of ghosts originated. We use the ``balanced'' notation for $\tau$-rigid pairs explained in \cite{IT26}.

 In Figure \ref{Figure01} we use the ``balanced'' notation $\symm AB$ for $\tau$-rigid pairs explained in \cite{IT26}. Although this notation is not defined for torsion classes since we do not have the Auslander Reiten translation $\tau$ on $\cG$, we can use the Theorem in \cite{IT26} which predicts what the balanced notation should be from the stability diagram. It is $\symm AB$ if $D_\cG(A)$ is the only wall that ``covers'' the vertex and $A=0$ if there is no covering wall and $D_\cG(B)$ is the only wall ``underneath'' the vertex, with $B=0$ if none exist. 

For example we have the vertex on the far left labeled $\symm0{S_3}$ indicating that there is no covering wall and the only wall underneath is $D_\cG(S_3)$. On the right we have four interesting vertices:
\begin{enumerate}
\item At top we have a vertex with label $\symm{I_2}{P_2}$. This suggests that $\tau I_2=P_2$ which seems correct.
\item Lower we have a vertex labeled $\symm{P_3}{S_2}$ which suggest that $\tau P_3$ should be $S_2$.
\item We have a vertex with label $\symm{S_2}X$ where, according to the convention we should have $X=P_2\oplus S_2$.
\item Finally we have label $\symm{Y}0$ where $Y=P_2\oplus S_2$.
\end{enumerate}

The missing object in our torsion class is $S_1$ which is a simple projective. Although $S_1$ is not in $\cG$, $\nu S_1=P_3$ is in $\cG$. So, this appears at the top black spot in Figure \ref{Figure01}. This is the $g$-vector of $S_1[1]$ which, in balanced notation, is $\symm{0}{\nu S_1}=\symm 0{P_3}$. The missing object $S_1$ produces two ghosts. (See \cite{GrInvRedrawn}, \cite{MoreGhosts} for more details.) The two ghosts are the non-strict epimorphisms
\[
	P_2\to S_2 \qquad \text{and}\qquad P_3\to I_2.
\]
These appear at vertices (3) and (4) as $X=P_2\oplus S_2$ and $Y=P_3\oplus I_2$. Also, at the bottom black vertex we have $\symm{P_2}0$ instead of $\symm{S_1}0$.

Something similar happens in Figure \ref{Fig: A3 example}. The vertex labeled $\color{blue}a$ is the $g$-vector of the missing projective object $P_2$. The balanced notation is $\symm{P_1}0$ which matches the unique ghost of $P_2$ which is the epimorphism $P_1\to S_1$. The other missing object in that example is $S_2$ with $\tau S_2=S_3\in\cG$. The balanced notation is $\symm{I_2}{S_3}$ which matched the unique ghost of $S_2$ which is $I_2\to S_1$.

Finally, we would like to point out that the balanced notation gives the pseudo-wide subcategory in each case. The pseudo-wide subcategory containing the vertex with balanced $g$-vector given by $\symm AB$ is $A^\perp\cap\,^\perp B$ which agrees with the construction in \cite{IT26}.
{
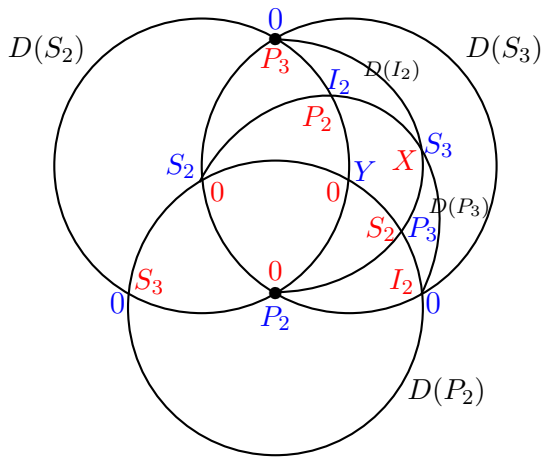
\begin{figure}[htbp]
\begin{center}

\begin{tikzpicture}[scale=1.4] 

\draw[blue] (0,-1.45) node{\small$P_2$};
\draw (0,-1.22) node{$\bullet$};
\draw[red] (0,-1) node{\small$0$};
\draw[blue] (0,1.4) node{$0$};
\draw (0,1.2) node{$\bullet$};
\draw[red] (0,1) node{\small$P_3$};
\draw[blue] (-1.5,-1.3) node{$0$};
\draw[red] (-1.2,-1.1) node{\small$S_3$};
\draw[blue] (1.5,-1.3) node{$0$};
\draw[red] (1.2,-1.1) node{\small$I_2$};
\draw[blue] (-.9,0) node{\small $S_2$};
\draw[red] (-.55,-.25) node{\small$0$};
\draw[blue] (.85,-.05) node{\small $Y$};
\draw[red] (.55,-.25) node{\small$0$};
\draw[blue] (.6,.8) node{\small $I_2$};
\draw[red] (.4,.45) node{\small$P_2$};
\draw[blue] (1.4,-.6) node{\small $P_3$};
\draw[red] (1,-.6) node{\small$S_2$};
\draw[blue] (1.55,.2) node{\small $S_3$};
\draw[red] (1.23,.05) node{\small$X$};
{
\begin{scope}

\begin{scope}
\clip rectangle (2,1.3) rectangle (0,-1.3);
\draw[thick] (0,0) ellipse [x radius=1.4cm,y radius=1.2cm];
\end{scope}

\begin{scope}[xshift=3.3mm,yshift=-6.9mm]
\begin{scope}[rotate=63]
\clip rectangle (2,1.3) rectangle (0,-1.3);
\draw[thick] (0,0) ellipse [x radius=1.4cm,y radius=1.18cm];
\end{scope}
\end{scope}

\begin{scope}[xshift=.87mm]
		\draw[thick] (1.4,.9) node[left]{\tiny$D(I_2)$}; 
		\draw[thick] (1.66,-.4) node{\tiny$D(P_3)$}; 
\end{scope}
\begin{scope}[xshift=-.7cm]
	\draw[thick] (0,0) circle[radius=1.4cm];
		\draw (-1,1.1) node[left]{\small$D(S_2)$}; 
\end{scope}
\begin{scope}[xshift=.7cm]
	\draw[thick] (0,0) circle[radius=1.4cm];
		\draw (1,1.1) node[right]{\small$D(S_3)$};
\end{scope}
\begin{scope}[yshift=-1.35cm]
	\draw[thick] (0,0) circle[radius=1.4cm];
	\draw (1.15,-.8) node[right]{\small$D(P_2)$}; 
\end{scope}
%
\coordinate(B1) at (-1.39,-1.22);
\coordinate(B2) at (.69,-.14);
\coordinate(B) at (-.9,-.95);
\coordinate(A) at (1.4,.16);
\coordinate(C) at (-.48,-.72);
\coordinate(AC) at (-0.05,-.4);
\end{scope} 
}
\end{tikzpicture}

\caption{This is the figure for Example \ref{eg: A3 ghosts}. The two back spots are the $g$-vectors of $S_1$ at bottom and $S[1]$ at top. In the quadrilateral we have vertices with balance $g$-vector notation $\symm{S_3}{X}$ and $\symm{Y}{0}$ explained in the text. For lack of space, we have shortened the pseudo-wall notation in this figure from $D_\cG(X)$ to $D(X)$}
\label{Figure01}
\end{center}
\end{figure}
}

\subsection{Walls have two sides}\label{ss23: p-tor for walls}

\begin{thm}\label{thm: P into overline P onto W}
$\overline \cP(\theta)$ is the pseudo-torsion class generated by $\cP(\theta)$ and $\cW(\theta)$. More precisely, every object $M$ of $\overline P(\theta)$ has a unique strict subobject $M_0$ which lies in $\cP(\theta)$ so that the quotient $M/M_0$ is an object of the pseudo-wide subcategory $\cW(\theta)$. Equivalently,
\[
	\cP(\theta)^\perp\cap \overline \cP(\theta)=\cW(\theta).
\]In particular, this implies that $\overline \cP(\theta)$ is the pseudo-torsion class generated by $\cP(\theta)$ and $\cW(\theta)$.
\end{thm}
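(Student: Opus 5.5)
The plan is to deduce the statement from the general torsion-pair decomposition (Theorem \ref{thm: tM and fM}), applied to the pseudo-torsion pair $(\cP(\theta),\cP(\theta)^\perp)$. By Theorem \ref{thm: P(theta) is P}, $\cP(\theta)$ is a pseudo-torsion class. By Theorem \ref{thm: duality between P(theta) and Q(theta)bar}, its right perpendicular class is $\cP(\theta)^\perp=\overline\cQ(\theta)$. So every $M\in\cG$ has a unique strict extension
\[
tM\cof M\onto fM
\]
with $tM\in\cP(\theta)$ and $fM\in\overline\cQ(\theta)$. I will take $M_0:=tM$.

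First I check that $M/M_0=fM$ lies in $\cW(\theta)$ when $M\in\overline\cP(\theta)$.
\begin{enumerate}
\item Since $fM$ is a strict quotient of $M$, and $\overline\cP(\theta)$ is closed under strict quotients (Theorem \ref{thm: P(theta) is P}), we get $fM\in\overline\cP(\theta)$. In particular $\theta(fM)\ge0$.
\item Since $fM\in\overline\cQ(\theta)$, we have $\theta(fM)\le0$, so $\theta(fM)=0$.
\item Also $\theta(N')\le0$ for every strict subobject $N'$ of $fM$, again because $fM\in\overline\cQ(\theta)$. By Definition \ref{def: pseudo-wall} this says exactly that $fM\in\cW(\theta)$.
\end{enumerate}
Uniqueness of $M_0$ comes from Theorem \ref{thm: tM and fM}. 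Indeed, $\cW(\theta)\subseteq\overline\cQ(\theta)=\cP(\theta)^\perp$, so any strict subobject $M_0'\in\cP(\theta)$ with quotient in $\cW(\theta)$ is a decomposition of the type covered by that theorem, hence equals $tM$.

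Next, the equality $\cP(\theta)^\perp\cap\overline\cP(\theta)=\cW(\theta)$. The left side is $\overline\cQ(\theta)\cap\overline\cP(\theta)$. The computation above shows that this intersection is contained in $\cW(\theta)$.

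For the reverse inclusion, I expect the only point needing care to be $\cW(\theta)\subseteq\overline\cP(\theta)$, i.e.\ the equivalence asserted in Definition \ref{def: pseudo-wall}; I will verify it explicitly. Take $X\in\cW(\theta)$ and a strict quotient $X''=X/X'$, where $X'\cof X$. Then
\[
\theta(X'')=\theta(X)-\theta(X')=-\theta(X')\ge0 .
\]
This gives $X\in\overline\cP(\theta)$. The inclusion $\cW(\theta)\subseteq\overline\cQ(\theta)$ is immediate from the definitions, so $\cW(\theta)\subseteq\overline\cQ(\theta)\cap\overline\cP(\theta)$.

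Finally, the generation statement. $\overline\cP(\theta)$ is a pseudo-torsion class (Theorem \ref{thm: P(theta) is P}). It contains $\cP(\theta)$ trivially, and it contains $\cW(\theta)$ by the previous paragraph. On the other hand, any pseudo-torsion class containing both $\cP(\theta)$ and $\cW(\theta)$ is closed under strict extensions. By the decomposition above, every $M\in\overline\cP(\theta)$ is a strict extension of an object of $\cP(\theta)$ by an object of $\cW(\theta)$, so such a class contains $M$. Hence $\overline\cP(\theta)$ is the smallest such class, which is also $Filt(\cP(\theta)\cup\cW(\theta))$ by Proposition \ref{prop: filt X}.
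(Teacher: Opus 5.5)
Your proof is correct, but it takes a different route from the paper's.

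\textbf{The paper's route.} The paper argues directly from the definitions. For $M\in\overline\cP(\theta)$ not in $\cP(\theta)$, it chooses a maximal nonzero strict quotient $M'$ of $M$ with $\theta(M')=0$. This $M'$ lies in $\cW(\theta)$ because its strict quotients are strict quotients of $M$. The paper then checks by hand that the kernel $M_0$ lies in $\cP(\theta)$. For a proper strict subobject $N'\cof M_0$, the module $M/N'$ is a strict quotient of $M$ strictly larger than $M'$. Hence $\theta(M_0/N')=\theta(M/N')>0$.

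\textbf{Your route.} You obtain $M_0=tM$ from the general decomposition of Theorem \ref{thm: tM and fM}, applied to the pseudo-torsion pair $(\cP(\theta),\cP(\theta)^\perp)$. You first identify $\cP(\theta)^\perp=\overline\cQ(\theta)$ via Theorem \ref{thm: duality between P(theta) and Q(theta)bar}. Then $fM\in\overline\cP(\theta)\cap\overline\cQ(\theta)$, and this forces $fM\in\cW(\theta)$.

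\textbf{What each buys.} Your approach gives more explicitly:
\begin{enumerate}
\item uniqueness of $M_0$, which the paper's argument does not spell out (it only says ``take $M'$ maximal'');
\item the identity $\cP(\theta)^\perp\cap\overline\cP(\theta)=\cW(\theta)$;
\item the generation statement.
\end{enumerate}
The paper really only proves existence and leaves the ``equivalently'' and ``in particular'' clauses implicit. The paper's approach, in turn, is more elementary and self-contained: it needs neither the duality theorem nor the $tM/fM$ machinery. It also describes $M/M_0$ concretely as the maximal strict quotient of $M$ on which $\theta$ vanishes. That same step drives the construction of the strict HN-filtration in Theorem \ref{thm: HN stratification}.
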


{
\begin{proof}
Let $M$ be an object of $\overline\cP(\theta)$ which is not in $\cP(\theta)$. Then, $\theta(M')\ge0$ for all strict quotients $M'$ of $M$. If $M\notin\cP(\theta)$ then equality must hold for some nonzero strict quotient $M'$ of $M$. Take $M'$ maximal with this property. Then $\theta(M')=0$ and $\theta(M'')\ge0$ for all strict quotients $M''$ of $M'$. So, $M'\in \cW(\theta)$. If $M'=M$ we are done. Otherwise $\theta(M)>0$ by maximality of $M'$. Let $M_0\cof M$  be the kernel of $M\onto M'$. Then $\theta(M_0)=\theta(M)>0$, for any proper strict subobject $N'\cof M_0$ we have that $M/N'$ is a strict quotient of $M$ larger than $M'$. So, $\theta(M/N')=\theta(M_0/N')>0$. So, $M_0\in \cP(\theta)$ giving the desired strict filtration $M_0\cof M\onto M'$ with $M_0\in\cP(\theta)$ and $M'\in\cW(\theta)$.
\end{proof}
}

\begin{cor}\label{cor: for thin walls, ov P is min ext of P}
For $\theta$ on a thin wall $D_\cG(X)$, $\overline\cP(\theta)$ is a minimal extension of $\cP(\theta)$ with extending object $X$.
\end{cor}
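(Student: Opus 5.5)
We first need a working meaning of ``minimal extension''. We take it in the sense of \cite{BCZ}, adapted to the strict setting: $\overline\cP(\theta)$ strictly contains $\cP(\theta)$, it is the pseudo-torsion class generated by $\cP(\theta)$ and $X$, and there is no pseudo-torsion class $\cP'$ with $\cP(\theta)\subsetneq\cP'\subsetneq\overline\cP(\theta)$. The plan is to read all of this off Theorem \ref{thm: P into overline P onto W}, together with the fact that on a thin wall $\cW(\theta)$ consists of the iterated strict self-extensions of $X$.

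First we check that the inclusion is strict and identify the generator. Since $\theta$ lies on a thin wall, $\cW(\theta)\neq0$. Lemma \ref{lem: Pbar=P when W=0} then gives $\cP(\theta)\subsetneq\overline\cP(\theta)$, and $X\in\overline\cP(\theta)\setminus\cP(\theta)$. By Theorem \ref{thm: P into overline P onto W}, $\overline\cP(\theta)$ is the pseudo-torsion class generated by $\cP(\theta)$ and $\cW(\theta)$. Every object of $\cW(\theta)$ is an iterated strict extension of copies of $X$, so any pseudo-torsion class containing $X$ contains $\cW(\theta)$. Hence $\overline\cP(\theta)=Filt(\cP(\theta)\cup\{X\})$ by Proposition \ref{prop: filt X}, and $X$ is the extending object.

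Next we prove minimality. Let $\cP'$ be a pseudo-torsion class with $\cP(\theta)\subsetneq\cP'\subseteq\overline\cP(\theta)$, and choose $M\in\cP'\setminus\cP(\theta)$. Theorem \ref{thm: P into overline P onto W} gives a strict sequence $M_0\cof M\onto M'$ with $M_0\in\cP(\theta)$ and $0\neq M'\in\cW(\theta)$. Since $\cP'$ is closed under strict quotients, $M'\in\cP'$. We then want to show $X\in\cP'$, after which the previous paragraph gives $\cP'\supseteq Filt(\cP(\theta)\cup\{X\})=\overline\cP(\theta)$.

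The main obstacle is extracting $X$ as a strict quotient of $M'$. The object $M'$ is a nonzero iterated strict self-extension of $X$, say with a strict filtration whose subquotients are all $X$. Its top step $N\cof M'\onto X$ exhibits $X$ as a strict quotient, provided the filtration is a genuine strict filtration in which each term is a strict subobject of $M'$. We would argue this by induction on the length of the filtration, using the composition results Theorems \ref{thm: composition of cofibrations} and \ref{thm: composition of fibrations}. If the definition of ``generated by one object'' turns out to be looser, we would instead use that $\cW(\theta)$ is pseudo-wide (Theorem \ref{thm: W(theta) is W}). Take a minimal nonzero strict quotient $Q$ of $M'$ lying in $\cW(\theta)$. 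Then $Q$ has no proper nonzero strict quotient in $\cW(\theta)$. Since every object of $\cW(\theta)$ has a strict filtration by copies of $X$, this forces $Q\cong X$. In either case $X\in\cP'$, which proves minimality and the corollary.
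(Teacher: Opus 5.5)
Your proposal is correct and is essentially the paper's argument. The paper splits on whether $X$ lies in the intermediate class $\cP$. When $X\notin\cP$, it uses the strict sequence $P_0\cof P\onto W$ from Theorem \ref{thm: P into overline P onto W}, with $W\in\cW(\theta)$ an iterated self-extension of $X$, to force $W=0$; this is the contrapositive of your extraction of $X$ from $M'\in\cP'$. You also make explicit a step the paper leaves tacit: a nonzero iterated strict self-extension of $X$ has $X$ as a strict quotient, by induction using Theorem \ref{thm: composition of fibrations}, and this is exactly what is needed to conclude $W=0$.
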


\begin{proof}
Let $\cP$ be a pseudo-torsion class in $\cG$ so that $\cP(\theta)\subseteq \cP\subseteq \overline\cP(\theta)$. Then either $X\in \cP$ or it isn't. If $X\in \cP$ then $\cP=\overline\cP(\theta)$ since this is generated by $\cP(\theta)$ and $X$. If $X\notin \cP$ then every object $P\in\cP$ must lie in $\cP(\theta)$ since $P$ has a filtration $P_0\into P\onto W$ where $W\in \cW(\theta)$. Since this is a thin wall, $W$ must be an iterated self-extension of $X$. So, $W$, being a strict quotient of $P$, must be 0.
\end{proof}

Every pseudo-wall has a negative or ``green'' side and a positive or ``red'' side. We also need to consider ``virtual green walls'' and ``virtual red walls" which are defined as follows.

\begin{defn}\label{def: virtual walls}
By \textbf{virtual green walls} we mean the set of all points $\theta\in V_\Lambda$ which do not lie on any wall so that, for all $\varepsilon>0$ the set of points $\theta+t\eta$ for $\eta=(1,1,\cdots,1)$ crosses some wall for $t\in (0,\varepsilon)$. This means in particular that $\theta$ lies in the closure of $\cW(\cG)$. Let $\cW_-(\cG)$ denote the union of $\cW(\cG)$ and the set of virtual green walls. The \textbf{virtual red walls} are defined similarly. These are points $\theta$ not on any wall so that $\theta-t\eta$ crosses the set of wall for $t>0$ arbitrarily small. Let $\cW_+(\cG)$ be the union of $\cW(\cG)$ and the set of virtual red walls. 
\end{defn}

Both sides, together with the virtual walls will be collected into disjoint unions of path connected subsets which we call ``green components'' and ``red components''. Thus the set $\cW_-(\cG)$ will have a ``green partition'' and $\cW_+(\cG)$ will have a ``red partition'' which we can now construct.

\begin{defn}\label{def: side components of walls}
    Let $\theta_0\in\cW_-(\cG)$. Then we define the \textbf{green component} of $\theta_0$ to be the set of all $\theta\in\cW_-(\cG)$ so that $\cP(\theta)=\cP(\theta_0)$. The \textbf{red component} of any $\theta_0\in \cW_+(\cG)$ is defined similarly: It is the set of all $\theta\in\cW_+(\cG)$ so that $\cQ(\theta)=\cQ(\theta_0)$. (Equivalently, by Theorem \ref{thm: duality between P(theta)bar and Q(theta)}, $\overline\cP(\theta)=\overline\cP(\theta_0)$.) When the green component containing $\theta_0\in\cW_-(\cG)$ contains all points $\theta\in V_\Lambda$ with $\cP(\theta)=\cP(\theta_0)$, we call the green component an \textbf{infinitesmal green chamber}. \textbf{Infinitesmal red chambers} are defined analogously. They are the red components of some $\theta_0\in\cW_+(\cG)$ which contain all $\theta\in V_\Lambda$ so that $\overline\cP(\theta)=\overline\cP(\theta_0)$.
\end{defn}

{
We need to verify the following.

\begin{thm}\label{thm: green components are connected}
Let $\theta_0\in \cW_-(\cG)$. Then the green component of $\theta_0$ is path connected. Similarly, the red component of any $\theta_0\in \cW_+(\cG)$ is path connected.
\end{thm}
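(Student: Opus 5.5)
The plan is to use convexity together with a short perturbation along $\eta=(1,1,\dots,1)$. Let $C$ be the green component of $\theta_0$, and take $\theta_1\in C$. Then $\cP(\theta_1)=\cP(\theta_0)=:\cP$, so by Corollary \ref{cor: P,Q equal on convex sets} the whole segment $\theta_s=s\theta_1+(1-s)\theta_0$ satisfies $\cP(\theta_s)=\cP$. The only issue is that interior points of this segment might leave $\cW_-(\cG)$, that is, land in an open pseudo-chamber that is not a virtual green wall. So I will show that every $\theta$ with $\cP(\theta)=\cP$ lies in $\cW_-(\cG)$, unless this set of $\theta$'s is a single chamber. The second possibility cannot occur here, because $\theta_0\in\cW_-(\cG)$ is not in the interior of a chamber with that $\cP$. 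Granting this, $C=S(\cP)$ is convex, and hence path connected.

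\textbf{Key step.} Suppose $\theta_s$ lies in a pseudo-chamber $U$ (so $\cW(\theta_s)=0$) and is not a virtual green wall. Then there is $\varepsilon>0$ such that $\theta_s+t\eta\notin\cW(\cG)$ for $t\in[0,\varepsilon)$. Since $\theta_s$ is off the walls, I will also use the openness of $\cU(M)$ and $\cV(M)$ via Lemma \ref{lem: dU(M) is union of D(M')}. By Theorem \ref{thm: P(theta) is constant on path components}, $\cP$ is constant on $U$. Now look at an endpoint, say $\theta_0\in\cW_-(\cG)$. I will apply Theorem \ref{thm: when P(theta) is constant} to the segment from $\theta_0$ to $\theta_s$. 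Its condition (1) says the segment meets walls only non-transversally, and condition (2) says every wall $D_\cG(L)$ through $\theta_0$ has $\frac{d}{dt}\theta_t(L)\le 0$ in the direction of $\theta_s$.

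I then want to show that $\theta_0$ lies in the closure of $U$ and approaches $U$ from its green side. If $\theta_0$ is on a wall $D_\cG(L)$, then moving along $\eta$ increases $\theta(L)$, and $\theta_0$ is then a green-side boundary point of $U$. Convexity of $U$ (Remark \ref{rem: path components are convex}) would then put the points $\theta_0+t\eta$, for small $t$, inside $U\cup\{\text{walls through }\theta_0\}$. This contradicts the hypothesis only if $U$ really is a full chamber, which is the excluded infinitesimal case. In the other case, $\theta_0$ is itself a virtual green wall, with walls accumulating just above it in the $\eta$-direction, and those accumulating walls must also accumulate above $\theta_s$. To see this, I will translate the configuration: a wall $D_\cG(X)$ crossed by $\theta_0+t\eta$ at small $t$ must be crossed transversely (since $\eta$ is positive on every nonzero dimension vector), so $\theta_0+t\eta$ crosses it positively. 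Its intersection with the segment from $\theta_0+t\eta$ to $\theta_s+t\eta$ must then be avoided, otherwise the labels $\cP(\theta_0+t\eta)$ and $\cP(\theta_s+t\eta)$ would differ, using Theorem \ref{thm: P(theta) are different in different path components} and limiting arguments.

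\textbf{Main obstacle.} The hardest part will be this transfer of accumulating walls from $\theta_0$ to $\theta_s$. The walls are only closed convex sets, not hyperplanes, so a wall crossing above $\theta_0$ need not extend above $\theta_s$. My first approach is to use the countability of walls and the minimal-object trick from the proof of Theorem \ref{thm: P(theta) are different in different path components}. Take a minimal $X$ whose wall separates $\theta_0+t\eta$ from $\theta_s+t\eta$. Then $X\in\cP(\theta_s+t\eta)\setminus\cP(\theta_0+t\eta)$ or the reverse, and letting $t\to0$ produces $\frac{d}{dt}\theta(X)$ of the wrong sign at $\theta_0$, contradicting condition (2). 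The red case is dual, using $\cQ$, Theorem \ref{thm: when Q(theta) is constant}, and $-\eta$.
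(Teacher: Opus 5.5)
There is a genuine gap. Your whole argument rests on one claim: that every $\theta$ with $\cP(\theta)=\cP$ lies in $\cW_-(\cG)$ unless $S(\cP)$ is a single chamber, so that the green component $C$ equals the convex set $S(\cP)$. This claim is false, and the situation you try to rule out is the generic one.

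Take $\Lambda=k$, $\cG=mod\text-\Lambda$, $V_\Lambda=\RR$. The only wall is $D_\cG(S)=\{0\}$, there are no virtual walls, and $\cP(\theta)=\{0\}$ for every $\theta\le 0$. So $S(\{0\})=(-\infty,0]$, while the green component of $\theta_0=0$ is just $\{0\}$. For $\Lambda=k\times k$, the green component of $(0,-1)$ is the L-shaped set $\{(0,y):y\le 0\}\cup\{(x,0):x\le 0\}$. This set is path connected but not convex, so no argument ending in ``$C$ is convex'' can be right. Definition~\ref{def: side components of walls} reserves the name infinitesimal green chamber for exactly those components with $C=S(\cP)$, which already signals that this equality fails in general.

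The concrete error in your key step is an orientation reversal. If $\theta_0$ is on a wall, then by Proposition~\ref{prop: dD in int} it lies in $int\,D_\cG(L)$ for some $L$. For small $t>0$ one then has $L\in\cP(\theta_0+t\eta)\setminus\cP$, so $\theta_0+t\eta$ leaves $S(\cP)\supseteq U$ rather than entering $U$. The chamber $U$ lies on the green side \emph{below} $\theta_0$. A point $\theta_s\in U$ at positive distance below the wall is simply not a virtual green wall, so there is nothing to contradict. Your planned ``transfer of accumulating walls'' from $\theta_0$ to $\theta_s$ is likewise an attempt to prove a false statement.

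What you actually need is a path inside $S(\cP)\cap\cW_-(\cG)$. This is in general a proper, non-convex subset of $S(\cP)$: its top boundary in the $\eta$-direction. Your first step is the right start: the segment $\theta_s$ lies in $S(\cP)$ by Corollary~\ref{cor: P,Q equal on convex sets}. The missing idea is to \emph{push the segment up} rather than try to show it already lies in $\cW_-(\cG)$. This is what the paper does:
\begin{enumerate}
\item For each $s\in[0,1]$, let $u(s)\ge 0$ be the first parameter with $\theta_s+u(s)\eta\in\cW_-(\cG)$. It exists because $\cP\ne\cG$, and $u(0)=u(1)=0$.
\item Theorem~\ref{thm: when P(theta) is constant} gives $\cP(\theta_s+u\eta)=\cP$ for $0\le u\le u(s)$. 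Hence $s\mapsto\theta_s+u(s)\eta$ lies in the green component and joins $\theta_0$ to $\theta_1$.
\item Convexity of $S(\cP)$ is then used to show that this path is continuous.
\end{enumerate}
The red case is the same, using $\cQ$ and $-\eta$.
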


\begin{proof} 
Let $\theta_0,\theta_1\in\cW_-(\cG)$ be in the same green component, i.e, $\cP(\theta_0)=\cP(\theta_1)=\cP$. (By definition of green component we have that $\cP\neq\cG$.) Then, by Theorem \ref{thm: when P(theta) is constant}, the straight line $\theta_t=t\theta_1+(1-t)\theta_0$ lies in the set $S(\cP)$ of all $\theta\in V_\Lambda$ so that $\cP(\theta)=\cP$. But this line segment might not lie in $\cW_-(\cG)$.

By Theorem \ref{thm: when P(theta) is constant}, $\cP(\gamma_t(s))=\cP$ for all $0\le s\le s_0$. So, $\gamma_t(s_0)$ is a path in $\cW_-(\cG)$ from $\theta_0$ to $\theta_1$. Using the fact that $S(\cP)$ is convex, one can show that this path is continuous. The argument for red components is similar.
\end{proof}
}


\section{Harder-Narasimhan stratification}\label{new sec3}

\subsection{Green paths and FHO sequences}\label{ss31: FHO}
{%
We show that ``green paths'' satisfy a ``strict forward Hom-orthogonality'' condition. This will lead to a ``strict Harder-Narasimhan stratification'' of our torsion class $\cG$.

\begin{defn}\label{def: smooth Bridgeland stability}
By a \textbf{smooth green path} in $V_\Lambda$ we mean a smooth path $\theta_t$, $t\in \mathbb{R}$, satisfying the following conditions.
\begin{enumerate}
\item $\frac{\partial}{\partial t}\theta_t(M)>0$ whenever $\theta_t\in D_\cG(M)$, i.e., whenever $\theta_t(M)=0$ and $\theta_t( M')\le0$ for all strict subobjects $M'$ of $M$.
\item For sufficiently large $t$, we have $\theta_t(M)>0$ for all $M\in\cG$.
\item For sufficiently large $t$, we have $\theta_{-t}(M)<0$ for all $M\in\cG$.
\end{enumerate}
A \textbf{linear green path} is a path of the form $\theta_t=\theta_0+t\eta$ for $\eta$ any real vector with all coordinated being positive.
 \end{defn}
}

{%
\begin{defn}\label{def: W(theta)}
    For any stability condition $\theta$, we recall that $\cW(\theta)$ is the set of all objects $X$ in $\cG$ so that $\theta\in D_\cG(X)$. Let $\cW_0(\theta)$ be the set of all $X$ in $\cW(\theta)$ so that no proper strict subobject of $X$ is in $\cW(\theta)$. Equivalently, $\theta$ is in the interior of $D_\cG(X)$.
\end{defn}

Note that every object $X\in\cW_0(\theta)$ is a pseudo-brick. (Every nonzero strict endomorphism of $X$ is an automorphism of $X$.)
}

{%
\begin{lem}\label{lem: green is like linear}
Suppose $\theta_t$ is a green path and $\theta_{t_0}\in D_\cG(X)$. Then $\theta_t(X')>0$ for all $t>t_0$ and all nonzero strict quotients $X'$ of $X$ including $X'=X$.
\end{lem}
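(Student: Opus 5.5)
We argue by contradiction on the first time the conclusion fails. Write $S$ for the set of nonzero strict quotients of $X$, including $X$ itself. $S$ is infinite in general, but the dimension vectors $[X']$ for $X'\in S$ form a finite set. So the functions $t\mapsto\theta_t(X')$ are finitely many smooth functions of $t$.

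At $t=t_0$ we have $\theta_{t_0}(X')\ge0$ for all $X'\in S$, since $\theta_{t_0}\in D_\cG(X)$. We first show that each such quantity becomes positive immediately after $t_0$.
\begin{itemize}
\item If $\theta_{t_0}(X')>0$, this holds by continuity.
\item If $\theta_{t_0}(X')=0$, then $X'\in\cW(\theta_{t_0})$. Indeed, every strict quotient of $X'$ is a strict quotient of $X$ (Theorem \ref{thm: composition of fibrations}), so it has $\theta_{t_0}\ge0$. Hence $\theta_{t_0}\in D_\cG(X')$, and the green condition (1) gives $\frac{d}{dt}\theta_t(X')|_{t=t_0}>0$.
\end{itemize}
Since there are only finitely many dimension vectors, there is an $\varepsilon>0$ such that $\theta_t(X')>0$ for all $X'\in S$ and all $t\in(t_0,t_0+\varepsilon)$.

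Now suppose the conclusion fails for some $t>t_0$. Let $t_1$ be the infimum of such times; then $t_1\ge t_0+\varepsilon>t_0$. On $(t_0,t_1)$ all values $\theta_t(X')$ with $X'\in S$ are positive. By continuity, $\theta_{t_1}(X')\ge0$ for all $X'\in S$, with equality for at least one $X'$. This uses finiteness of the dimension vectors again: some single function must reach $0$ at $t_1$.

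Among those $X'\in S$ with $\theta_{t_1}(X')=0$, choose one. Every strict quotient $X''$ of $X'$ lies in $S$, so $\theta_{t_1}(X'')\ge0$. Hence $\theta_{t_1}\in D_\cG(X')$, and the green condition gives $\frac{d}{dt}\theta_t(X')|_{t=t_1}>0$. But $\theta_t(X')>0$ for $t$ slightly less than $t_1$ and $\theta_{t_1}(X')=0$, which forces the derivative at $t_1$ to be $\le0$. This is a contradiction.

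The main point to be careful about is that the infimum argument produces a time $t_1$ at which some quotient's value is exactly zero, rather than values merely accumulating toward zero. This is where the finiteness of the set of dimension vectors $\{[X']:X'\in S\}$ is essential, as noted in the remark after Definition \ref{def: extended dim M}.
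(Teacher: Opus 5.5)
Your proof is correct and follows essentially the same route as the paper. Both arguments take the first time $t_1>t_0$ at which some $\theta_t(X')$ fails to be positive, and use finiteness of the dimension vectors of strict quotients so that this time is attained with $\theta_{t_1}(X')=0$. Both then observe that $\theta_{t_1}\in D_\cG(X')$ contradicts the green condition. Your version is slightly more careful than the paper's in two respects: you check $\theta_{t_0}\in D_\cG(X')$ before invoking condition (1) at $t_0$, and you make the initial $\varepsilon$ uniform over the finitely many dimension vectors.
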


\begin{proof}
By Definition \ref{def: smooth Bridgeland stability} (1), $\theta_t(X')>0$ for $t=t_0+\varepsilon$ for small $\varepsilon>0$ for all nonzero strict quotients $X'$ of $X$, including $X'=X$. 

To show that $\theta_t(X')>0$ for all $t>t_0$ and all nonzero strict quotients $X'$ of $X$, suppose not. Then, we have $\theta_t(X')\le 0$ for some $t>t_0$ and some $X'$. Since the set of all dimension vectors of such $X'$ is finite, the set of such $t$ is a closed subset of the interval $(t_0,1]$ which does not have $t_0$ as a limit point. So, there is a minimum such $t$, call it $t_1>t_0$. Then $\theta_{t_1}(X')=0$ and $\theta_{t_1}(X'')\ge0$ for all other strict quotients $X''$ of $X$. Therefore, $\theta_{t_1}\in D_\cG(X')$. By Definition \ref{def: smooth Bridgeland stability} (1), $\theta_t(X')$ is strictly increasing at $t=t_1$. So, $\theta_t(X')<0$ for $t=t_1-\varepsilon$ contradicting the minimality of $t_1$. This proves the lemma.
\end{proof}
}

{
\begin{thm}\label{thm: FHO}
    Let $\theta_t$ be a smooth green path and let $t_0<t_1$. Then, there are no nonzero strict morphisms $X_0\to X_1$ where $X_i\in \cW(\theta_{t_i})$. Similarly, there are no nonzero strict morphisms between nonisomorphic objects of $\cW_0(\theta_t)$.
\end{thm}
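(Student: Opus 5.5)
Given a nonzero strict morphism $f:X_0\to X_1$ with $X_i\in\cW(\theta_{t_i})$, I will look at its image $I=\im f$ and derive a sign contradiction at time $t_1$. Since $f$ is strict, $I$ is a nonzero strict quotient of $X_0$ (via the fibration $X_0\onto I$) and a strict subobject of $X_1$ (via the cofibration $I\cof X_1$), by Definition \ref{def: strict morphism}. The point is that $I$ is forced positive by the first condition and non-positive by the second.

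First, since $\theta_{t_0}\in D_\cG(X_0)$ and $t_1>t_0$, Lemma \ref{lem: green is like linear} applies to the nonzero strict quotient $I$ of $X_0$ and gives $\theta_{t_1}(I)>0$. Second, since $\theta_{t_1}\in D_\cG(X_1)$, the definition of the pseudo-wall (Definition \ref{def: pseudo-wall}) gives $\theta_{t_1}(M')\le 0$ for every strict subobject $M'$ of $X_1$, in particular $\theta_{t_1}(I)\le0$. These two inequalities contradict each other, so $f=0$. The only input to check is that Lemma \ref{lem: green is like linear} is stated for all nonzero strict quotients, including $X'=X$, and for all $t>t_0$ rather than only small $t-t_0$; this is exactly its statement, so this part is essentially immediate.

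For the second claim, let $X,Y\in\cW_0(\theta_t)$ be nonisomorphic and let $f:X\to Y$ be a nonzero strict morphism. By Theorem \ref{thm: W(theta) is W}, $\cW(\theta_t)$ is pseudo-wide, so $\ker f$, $I=\im f$ and $\coker f$ all lie in $\cW(\theta_t)$. Now $I$ is a nonzero strict subobject of $Y$ lying in $\cW(\theta_t)$. By the definition of $\cW_0(\theta_t)$ (Definition \ref{def: W(theta)}), $Y$ has no proper strict subobject in $\cW(\theta_t)$, and I interpret "proper" as nonzero and proper, matching the interior condition. Hence $I=Y$. Likewise $\ker f$ is a strict subobject of $X$ in $\cW(\theta_t)$, and it is proper because $f\ne0$, so $\ker f=0$. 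Thus $f$ is a bijective module map, i.e. an isomorphism, contradicting the assumption that $X\not\cong Y$.

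The main subtlety I expect is this reading of $\cW_0$ as $\theta_t\in int\,D_\cG(X)$, which excludes exactly the nonzero proper strict subobjects. I also need the kernel of $f$ to be a strict subobject of $X$, which holds by the definition of a strict morphism.
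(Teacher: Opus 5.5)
Your proof is correct and follows essentially the same route as the paper: the image $I$ of $f$ gets $\theta_{t_1}(I)>0$ from Lemma \ref{lem: green is like linear} and $\theta_{t_1}(I)\le 0$ from the wall condition for $X_1$, and the second claim uses Theorem \ref{thm: W(theta) is W} together with the minimality built into $\cW_0(\theta_t)$. Your explicit check that $\ker f=0$ simply spells out the step the paper compresses into ``$X_0=\im f=X_1$''.
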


\begin{proof}
    Suppose $f:X_0\to X_1$ is a nonzero strict morphism. Then, its image $I$ is a nonzero strict quotient of $X_0$ and a strict subobject of $X_1$. Since $\theta_{t_0}\in D(X_0)$, $\theta_{t_0}(X_0)=0$ and $\theta_{t_0}(I)\ge0$. By the lemma, $\theta_{t_1}(I)>0$. But, $\theta_{t_1}(I)\le0$ since $I$ is a strict subobject of $X_1$. This is a contradiction.

    Now suppose there is a nonzero strict morphism $f:X_0\to X_1$ where $X_0,X_1\in \cW_0(\theta_t)$ are nonisomorphic. Then, $\ker f,\im f,\coker f$ lie in $\cW(\theta_t)$ by Theorem \ref{thm: W(theta) is W}. By definition of $\cW_0(\theta_t)$, we must have $X_0=\im f=X_1$ contradicting the assumption that $X_0\not\cong X_1$.
\end{proof}
}

{
Using the following definition we can rephrase Theorem \ref{thm: FHO}.

\begin{defn}\label{def: FHO}
By a \textbf{strict forward Hom-orthogonal (FHO) sequence} of pseudo-bricks we mean a set $\cB$ of pseudo-bricks together with a indexing function $\pi:\cB\to\RR$ so that there are no nonzero strict morphisms $B\to B'$ (except for isomorphisms) if $\pi(B)\le \pi(B')$.
\end{defn}

Theorem \ref{thm: FHO} can be rephrased by saying that a smooth green path $\theta_t$ gives a strict FHO sequence, namely, the collection of all pseudo-bricks in $\cW_0(\theta_t)$ for all $t$ with indexing function $\pi(B)=t$ if $B\in\cW_0(\theta_t)$. In the next subsection we will show that this strict FHO sequence is maximal. 
}
\subsection{HN-stratifications}\label{ss32: HN}

{
We show that each smooth green path $\theta_t$ for a torsion class $\cG$ gives a ``strict HN-stratification'' of $\cG$ that is minimal and conclude that the corresponding ``strict Hom-orthogonal sequence'' is maximal.

\begin{defn}\label{def: strict filtration}
By a \textbf{strict filtration} of an object $X\in \cG$ we mean a sequence of cofibrations:
\[
	X_0\cof X_1\cof \cdots\cof X_n=X.
\]
By Theorem \ref{def/thm: subquotient}, this is equivalent to an increasing sequence $X_1\subset X_2\subset\cdots\subset X_n=X$ where each $X_i$ is a strict subobject of $X$. Then $X_i\cof X_j$ is strict for all $i<j$. The subquotients $X_j/X_{j-1}$ are {strict subquotients} of $X$ by Theorem \ref{def/thm: subquotient}.
\end{defn}

\begin{lem}\label{lem: filtration of W(theta)}
Any object $X\in \cW(\theta)$ has a strict filtration having strict subquotients in $\cW_0(\theta)$.
\end{lem}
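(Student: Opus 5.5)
We argue by induction on the length (total dimension) of $X\in\cW(\theta)$. If $X=0$ the empty filtration works. If $X\neq 0$ and $X\in\cW_0(\theta)$, the one-step filtration $0\cof X$ already has its only subquotient in $\cW_0(\theta)$. Otherwise $X$ has a nonzero proper strict subobject lying in $\cW(\theta)$. Among all nonzero strict subobjects of $X$ that lie in $\cW(\theta)$ we choose one, $X_1$, of minimal length.

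The first step is to check that $X_1\in\cW_0(\theta)$. Let $Z$ be a proper strict subobject of $X_1$ with $Z\in\cW(\theta)$. By Theorem \ref{thm: composition of cofibrations}, $Z$ is a strict subobject of $X$, so minimality forces $Z=0$. Hence $X_1\in\cW_0(\theta)$. We also have $\theta(X_1)=0$, and $X_1$ is the image of the inclusion $X_1\cof X$, which is a strict morphism between objects of $\cW(\theta)$.

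The second step is to pass to the quotient. By Theorem \ref{thm: W(theta) is W}, $\cW(\theta)$ is pseudo-wide, so the cokernel $X/X_1$ of the strict morphism $X_1\cof X$ lies in $\cW(\theta)$. It has smaller length, so by induction it has a strict filtration
\[
0=Y_0\cof Y_1\cof\cdots\cof Y_m=X/X_1
\]
with subquotients $Y_j/Y_{j-1}$ in $\cW_0(\theta)$.

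The third step is to lift this filtration to $X$. Let $p\colon X\onto X/X_1$ be the fibration and set $X_{j+1}=p^{-1}(Y_j)$, so that $X_1=p^{-1}(0)$. Since $Y_j\cof X/X_1$ and $p$ is a fibration, the composition $X\onto X/X_1\onto (X/X_1)/Y_j\cong X/X_{j+1}$ is a fibration by Theorem \ref{thm: composition of fibrations}. Hence $X_{j+1}\cof X$ is strict; this is essentially the argument used in Theorem \ref{def/thm: subquotient}(b)$\Rightarrow$(c). By Definition \ref{def: strict filtration}, the chain $0\cof X_1\cof X_2\cof\cdots\cof X_{m+1}=X$ is then a strict filtration, and its subquotients are $X_1$ and $X_{j+1}/X_j\cong Y_j/Y_{j-1}$, all in $\cW_0(\theta)$.

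The main obstacle is this lifting step. We must confirm that preimages of strict subobjects under a fibration are strict subobjects. We reduce it to composition of fibrations as above, using that a quotient by a strict subobject is a fibration by definition.
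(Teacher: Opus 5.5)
Your proof is correct and follows the paper's argument: induct on length, split off a strict subobject lying in $\cW(\theta)$, apply induction to the quotient, and pull its filtration back to $X$. The differences are minor. You choose the subobject of minimal length so that it already lies in $\cW_0(\theta)$, whereas the paper applies induction to both pieces. You also spell out, via composition of fibrations, why the pulled-back terms are strict subobjects of $X$, which the paper leaves implicit.
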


\begin{proof}
If $X$ is not in $\cW_0(\theta)$, it has a subobject $K$ so that $\theta(X/K)=0$. Then $\theta(K)=0$. So $K,X/K\in \cW(\theta)$ and, by induction on length, $K,X/K$ have filtrations with subquotients in $\cW_0(\theta)$. Pulling back the filtration of $X/K$ to $X$, we obtain the desired filtration of $X$.
\end{proof}
}

{
\begin{thm}\label{thm: HN stratification}
Let $\theta_t$ be a smooth green path. Then, for any object $M\in\cG$, there is a sequence of real numbers $t_1>t_2>\cdots>t_m$ and a filtration of $M$ by strict subobjects
\[
	M=M_0\supset M_1\supset M_2\supset \cdots\supset M_m=0
\]
in $\cG$ so that $M_{k-1}/M_k\in\cW(\theta_{t_k})$. Furthermore, this filtration can be refined: Each $M_k$ has a filtration by strict objects:
\[
	M_{k0}=M_{k+1}\subset M_{k1}\subset\cdots\subset M_{kn}=M_k
\]
so that the quotients of consecutive terms lie in $\cW_0(\theta_{k+1})$.
\end{thm}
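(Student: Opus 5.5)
We argue by induction on the length of $M$, constructing the top piece of the filtration first. Fix a nonzero $M\in\cG$. Let $t_1$ be the supremum of all $t$ such that $M\notin\cP(\theta_t)$. By condition (2) of Definition \ref{def: smooth Bridgeland stability}, $M\in\cP(\theta_t)$ for $t\gg0$. By condition (3), $\theta_t(M)<0$ for $t\ll0$. Hence $t_1$ is a finite real number.

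Since only finitely many dimension vectors occur among strict quotients of $M$, the set of $t$ with $M\in\cP(\theta_t)$ is open. Therefore $M\in\overline\cP(\theta_{t_1})\setminus\cP(\theta_{t_1})$, because $\overline\cU(M)$ is closed and is the closure of $\cU(M)$ by Lemma \ref{lem: dU(M) is union of D(M')}. We then apply Theorem \ref{thm: P into overline P onto W} at $\theta_{t_1}$. This gives a strict subobject $M_1\cof M$ with $M_1\in\cP(\theta_{t_1})$ and $M/M_1\in\cW(\theta_{t_1})$, and $M/M_1\ne0$ since $M\notin\cP(\theta_{t_1})$.

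Next we show that the times strictly decrease. Since $M_1\in\cP(\theta_{t_1})$, the same openness/closedness argument applied to $M_1$ shows that $M_1\in\cP(\theta_t)$ for all $t\ge t_1-\varepsilon$ for some $\varepsilon>0$. More precisely, we claim that $M_1\in\cP(\theta_t)$ for every $t\ge t_1$.

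Suppose instead that $M_1\notin\cP(\theta_s)$ for some $s>t_1$. Then some nonzero strict quotient $M_1''$ of $M_1$ has $\theta_{s'}(M_1'')=0$ for a first time $s'>t_1$, with all strict quotients of $M_1''$ nonnegative there. That puts $\theta_{s'}\in D_\cG(M_1'')$ with $\theta_t(M_1'')$ decreasing through zero, which violates condition (1). This is exactly the argument of Lemma \ref{lem: green is like linear}, run with time reversed relative to the starting point.

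Consequently $t_2:=\sup\{t: M_1\notin\cP(\theta_t)\}<t_1$. The induction then produces the next layer from $M_1$. Each $M_k$ is a strict subobject of $M_{k-1}$, so Theorem \ref{thm: composition of cofibrations} makes every $M_k$ a strict subobject of $M$, and the length strictly drops at each stage. Hence the process terminates with $M_m=0$, and we have $t_1>t_2>\cdots>t_m$ with $M_{k-1}/M_k\in\cW(\theta_{t_k})$.

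For the refinement, each quotient $M_{k-1}/M_k\in\cW(\theta_{t_k})$ has a strict filtration with subquotients in $\cW_0(\theta_{t_k})$ by Lemma \ref{lem: filtration of W(theta)}. We pull these filtrations back along the fibration $M_{k-1}\onto M_{k-1}/M_k$. By the Main Lemma \ref{lemma A} and Remark \ref{rem: inherited subquotients}, the preimages are strict subobjects of $M_{k-1}$, hence of $M$. Their consecutive quotients agree with the quotients in $\cW_0(\theta_{t_k})$. Up to the index conventions in the statement, this gives the refined filtration.

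The main obstacle is the monotonicity step, namely showing $M_1\in\cP(\theta_t)$ for all $t\ge t_1$ so that the times strictly decrease. We need a careful version of Lemma \ref{lem: green is like linear}, since $M_1$ lies in $\cP(\theta_{t_1})$ rather than on a wall at $t_1$. We also need to check that the first failure time after $t_1$ is actually attained; this uses finiteness of the dimension vectors of strict quotients of $M_1$, which makes the failure set closed and bounded away from $t_1$.
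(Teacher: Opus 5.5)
Your proposal is correct and is essentially the paper's proof. You take the last time $t_1$ at which a nonzero strict quotient of $M$ vanishes, split off the maximal such quotient (your appeal to Theorem \ref{thm: P into overline P onto W} is exactly the argument the paper runs inline), iterate on $M_1$, and refine each layer with Lemma \ref{lem: filtration of W(theta)}. Your version is in fact more careful in two places: defining $t_1$ as a supremum avoids the paper's unneeded claim that only finitely many zero-times occur, and your first-failure argument proves $M_1\in\cP(\theta_t)$ for all $t\ge t_1$, hence $t_2<t_1$, which the paper simply asserts. The one slip is a citation: preimages of strict subobjects of $M_{k-1}/M_k$ are strict in $M_{k-1}$ by Theorem \ref{thm: composition of fibrations} (as in part (b) of the proof of Theorem \ref{def/thm: subquotient}), not by the Main Lemma \ref{lemma A}, which goes in the opposite direction.
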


\begin{proof}
First note that there are only finitely many real numbers $t$ having the property that $\theta_t(X)=0$ for some strict quotient $X$ of $M$. This is because the condition $\theta_t(X)=0$ depends only on the dimension vector of $X$ and, although $M$ may have infinitely many strict quotients, there are only finitely many dimension vectors of strict quotients. In this finite set, let $t_1$ be maximal so that $\theta_{t_1}(X)=0$ for some nonzero strict quotient $X$ of $M$. Such $t_1$ exists since $\theta_t(X)>0$ for $t>>0$ and $\theta_t(X)<0$ for $t<<0$. 

Then, $\theta_{t_1}(X')\ge 0$ for all other strict quotients $X'$ of $M$ since, if $\theta_{t_1}(X')<0$, then $\theta_t(X')=0$ for some $t>t_1$ contradicting the maximality of $t_1$.

Let $X_1$ be a maximal strict quotient of $M$ such $\theta_{t_1}(X_1)=0$ and let $M_1$ be the kernel of the epimorphism $M\onto X_1$. Then $\theta_{t_1}(X')\ge0$ for all strict quotients of $X_1$ since every strict quotient of $X_1$ is a strict quotient of $M$. So, $X_1=M/M_1\in \cW(\theta_{t_1})$. If $M_1=0$, we have the desired filtration of $M$ with only one term. So, we are done.

If $M_1\neq0$ then, by maximality of $X_1$ we have $\theta_{t_1}(M_1)=\theta_{t_1}(M)>0$. Also, we have $\theta_{t_1}(Y)>0$ for all nonzero strict quotients of $M_1$ since $M$ modulo the kernel of $M_1\onto Y$ is a strict quotient of $M$ larger than $X_1$. Thus, $M_1\in \cP(\theta_{t_1})$. 

Let $t_2<t_1$ be maximal so that $\theta_{t_2}(Y)=0$ for some nonzero strict quotient $Y$ of $M_1$. Let $X_2$ be a maximal strict quotient of $M_1$ so that $\theta_{t_2}(X_2)=0$ and let $M_2$ be the kernel of $M_1\to X_2$. Then $X_2=M_1/M_2\in \cW(\theta_{t_1})$ and $M_2\in \cP(\theta_{t_2})$. Continuing in this way, we get the first filtration of $M$.

The second filtration follows from Lemma \ref{lem: filtration of W(theta)}.
\end{proof}
}

{
This implies that we have a ``strict HN-stratification'' of $\cG$ in the following sense.

\begin{defn}\label{def: HN-stratification}
By a \textbf{strict HN-stratification} of $\cG$ we mean a family of pseudo-wide subcategories $\cW_s,s\in \cS$ of $\cG$ parametrized by a totally ordered set $\cS$ satisfying the following.
\begin{enumerate}
\item There are no nonzero strict morphism from $X\in \cW_{s_0}$ to $Y\in \cW_{s_1}$ if $s_0<s_1$.
\item Every object $M$ of $\cG$ has a strict filtration
\[
	M=M_0\supset M_1\supset M_2\supset \cdots \supset M_{m-1}\supset M_{m}=0
\]
so that $M_{k-1}/M_k$ lies in $\cW_{s_k}$ where $s_1>s_2>\cdots >s_m$. We call this the \textbf{strict HN-filtration} of $M$ with respect to the strict HN-stratification $\cW_s,s\in \cS$.
\end{enumerate}
The pseudo-wide subcategories $\cW_s$ are called the \textbf{slices} of the HN-stratification.
\end{defn}
}

{
\begin{lem}\label{lem: induced maps are strict}
Let $f:M\to N$ be a strict morphism and let $X$ be a subobject of the kernel of $f$. Then the induced map $\overline f:M/X\to N$ is strict.
\end{lem}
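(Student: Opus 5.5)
The plan is to check the two conditions in Definition \ref{def: strict morphism} directly for $\overline f:M/X\to N$: the kernel must be a strict subobject of $M/X$, and the image must be a strict subobject of $N$. Write $K=\ker f$. Since $f$ is strict, $K\in\cG$ and $K\cof M$ is a cofibration, and $\im f\cof N$ is a cofibration. Also $M/X\in\cG$, because $\cG$ is closed under quotients.

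The image is immediate. The map $M\onto M/X$ is surjective, so $\im\overline f=\im f$, which is already a strict subobject of $N$.

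For the kernel, note that $X\subseteq K$ gives $\ker\overline f=K/X$. I would apply the Main Lemma \ref{lemma A} to the short exact sequence
\[
0\to X\to M\xrightarrow{p} M/X\to 0 .
\]
All three terms lie in $\cG$: $X$ is a subobject of $M$, so $X\in\cG$ by definition, and $M/X\in\cG$ as noted. Take $B'=K$, which is a strict subobject of $M$. The induced exact sequence of subobjects is $0\to X\cap K\to K\to p(K)\to 0$. Here $X\cap K=X$ and $p(K)=K/X$. The Main Lemma then gives that $C'=K/X$ is a strict subobject of $C=M/X$. In particular $K/X\in\cG$, and $\ker\overline f\cof M/X$ is a cofibration.

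Combining the two parts, $\overline f$ factors as the fibration $M/X\onto (M/X)/(K/X)\cong M/K\cong\im f$ followed by the cofibration $\im f\cof N$, so $\overline f$ is strict. The only point needing care is that the Main Lemma does not require $X\cof M$ to be strict. It only needs the middle row to consist of objects of $\cG$ together with a strict subobject $B'$ of the middle term, and both hold here. This is the same mechanism as in part (2) of Proposition \ref{prop: ker and coker in G}.
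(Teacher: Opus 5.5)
Your proof is correct and follows the paper's argument. The image of $\overline f$ equals that of $f$, and the Main Lemma \ref{lemma A}, applied to $0\to X\to M\to M/X\to 0$ with the strict subobject $K=\ker f$, gives $K/X\cof M/X$; the paper merely phrases the image step as a reduction to the case where $f$ is onto.
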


\begin{proof}
Since $f,\overline f$ have the same image, we may assume they are both onto. Then $f:M\onto N$ is a strict epimorphism with kernel, say $K$ and $X\subseteq K$. By Lemma \ref{lemma A}, $K/X\cof M/X$ is a cofibration. So, the quotient map $M/X\onto (M/X)/(K/X)\cong M/K\cong N$ is strict.
\end{proof}

\begin{lem}\label{lem: no forward Hom}
{\color{black} Let $M$ have strict HN-filtration as given above. Then there is no nonzero strict morphism from $M$ to any $Y\in\cW_s$ where $s>s_1$.}
\end{lem}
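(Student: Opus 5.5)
We argue by induction on the length $m$ of the strict HN-filtration $M=M_0\supset M_1\supset\cdots\supset M_m=0$, with $M_{k-1}/M_k\in\cW_{s_k}$ and $s_1>s_2>\cdots>s_m$. Let $f:M\to Y$ be a strict morphism with $Y\in\cW_s$ and $s>s_1$; we must show $f=0$. The idea is to restrict $f$ to $M_1$, use induction to kill it there, and then factor $f$ through $M/M_1\in\cW_{s_1}$, where axiom (1) of Definition \ref{def: HN-stratification} applies.

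\textbf{Base case $m=1$.} Here $M=M_0/M_1\in\cW_{s_1}$, so axiom (1) with $s_1<s$ gives $f=0$ directly.

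\textbf{Inductive step, restriction to $M_1$.} Since $M_1\cof M$ is a cofibration, the composite $f|_{M_1}:M_1\cof M\to Y$ is strict, being a composition of strict morphisms. The submodule $M_1$ carries the strict filtration $M_1\supset M_2\supset\cdots\supset M_m=0$. Its terms are strict subobjects of $M_1$ by Lemma \ref{lem: parallel inclusions}, and its subquotients are the same objects $M_{k-1}/M_k\in\cW_{s_k}$. So this is a strict HN-filtration of $M_1$ of length $m-1$ with top index $s_2<s_1<s$. By induction, $f|_{M_1}=0$, that is, $M_1\subseteq\ker f$.

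\textbf{Inductive step, factoring through $M/M_1$.} Since $M_1$ is a subobject of $\ker f$, Lemma \ref{lem: induced maps are strict} shows that the induced map $\overline f:M/M_1\to Y$ is strict. Alternatively, Proposition \ref{prop: ker and coker in G}(2) gives the same conclusion, since $M/M_1$ is the cokernel of the strict inclusion $M_1\cof M$. Now $M/M_1\in\cW_{s_1}$ and $s_1<s$, so axiom (1) gives $\overline f=0$, and hence $f=0$.

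\textbf{Expected obstacle.} The only delicate point is the strictness bookkeeping. First, the restriction of $f$ to $M_1$ must be strict, which follows from composition of strict morphisms. Second, the truncated filtration must be a legitimate strict HN-filtration of $M_1$, which follows from Lemma \ref{lem: parallel inclusions}, or from Definition \ref{def: strict filtration} together with Theorem \ref{def/thm: subquotient}. Third, the factored map must be strict, which is exactly what Lemma \ref{lem: induced maps are strict} was set up to provide. Beyond these, the argument is the classical one.
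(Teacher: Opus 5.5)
Your proof is correct and takes essentially the same route as the paper: induction along the strict HN-filtration, showing $f$ vanishes on one piece, then using Lemma \ref{lem: induced maps are strict} to factor through a single slice where axiom (1) of Definition \ref{def: HN-stratification} applies. The paper's proof is terse and loose in its indexing. Your version supplies the strictness checks it leaves implicit: that the restriction is strict, by composition of strict morphisms, and that the truncated filtration of $M_1$ is again a strict HN-filtration, by Lemma \ref{lem: parallel inclusions}.
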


\begin{proof}
Any strict morphism $f: M\to Y$ must be 0 on $M_{k-1}$ since $s_k<s_1<s$. The induced map $\overline f:M/M_{k-1}\to Y$ is strict by {Lemma \ref{lem: induced maps are strict}}. By induction on $k$, $\overline f=0$.
\end{proof}
}

{
\begin{prop}\label{prop: uniqueness of HN}
The strict HN-filtration of $M$ with respect to a fixed strict HN-stratification of $\cG$ is unique.
\end{prop}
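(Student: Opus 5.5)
Given two strict HN-filtrations of $M$,
\[
M=M_0\supset M_1\supset\cdots\supset M_m=0,\qquad M=N_0\supset N_1\supset\cdots\supset N_p=0,
\]
with $M_{k-1}/M_k\in\cW_{s_k}$, $s_1>\cdots>s_m$, and $N_{k-1}/N_k\in\cW_{r_k}$, $r_1>\cdots>r_p$, we show $s_1=r_1$ and $M_1=N_1$, and then induct on the length of $M$ applied to $M_1=N_1$. Lemma \ref{lem: no forward Hom}, together with the Hom-vanishing axiom (1) of Definition \ref{def: HN-stratification}, does most of the work.

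First, suppose $s_1>r_1$. The quotient map $M\onto M/M_1$ is a fibration, hence a strict morphism, and it is nonzero since $M/M_1\neq0$. Its target lies in $\cW_{s_1}$ with $s_1>r_1$. Applying Lemma \ref{lem: no forward Hom} to the $N$-filtration of $M$ gives a contradiction. By symmetry $r_1\le s_1$, so $s_1=r_1=:s$.

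Next, we show $N_1\subseteq M_1$. Consider the composite
\[
N_1\cof M\onto M/M_1 .
\]
It is a composition of strict morphisms, hence strict. The filtration $N_1\supset N_2\supset\cdots\supset 0$ is a strict HN-filtration of $N_1$ with top index $r_2<s$. Strictness of $N_k\cof N_1$ follows from Lemma \ref{lem: parallel inclusions}, and the subquotients are unchanged. Its target $M/M_1$ lies in $\cW_s$ with $s>r_2$, so Lemma \ref{lem: no forward Hom} forces the composite to vanish, i.e.\ $N_1\subseteq M_1$. By symmetry $M_1\subseteq N_1$, so $M_1=N_1$.

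For the induction, $M_1$ now carries the two strict filtrations $(M_k)_{k\ge1}$ and $(N_k)_{k\ge1}$. These are strict HN-filtrations of $M_1$ by the same observation as above. Since $M_1$ has smaller length than $M$, induction gives $m=p$, $s_k=r_k$ and $M_k=N_k$ for all $k$.

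The main obstacle is the bookkeeping for truncated filtrations. One must check that the truncated filtration of $N_1$ (or $M_1$) is genuinely a strict HN-filtration, with each $N_k$ a strict subobject of $N_1$. It is also worth noting that Lemma \ref{lem: no forward Hom} as stated requires only $s>s_1$ and uses the inductive quotient argument through Lemma \ref{lem: induced maps are strict}; that is the only Hom-vanishing input needed. The degenerate case $M=0$ with the empty filtration is trivial.
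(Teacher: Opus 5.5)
Your proof is correct and follows the paper's argument: you compare the top indices using Lemma \ref{lem: no forward Hom}, then show $N_1\subseteq M_1$ and $M_1\subseteq N_1$, and then induct. You also spell out the step the paper dismisses as ``the same argument'', namely applying Lemma \ref{lem: no forward Hom} to the strict composite $N_1\cof M\onto M/M_1$ via the truncated filtration of $N_1$ (made strict by Lemma \ref{lem: parallel inclusions}); your assumption of proper inclusions, $M/M_1\neq 0$, is the same convention the paper's proof relies on.
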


\begin{proof} Given a strict HN-filtration of $M$ as in Definition \ref{def: HN-stratification},
let $N=M$ with another strict HN-filtration
\[
	N=N_0\supseteq N_1\supseteq N_2\supseteq \cdots \supseteq N_{n-1}\supseteq N_{n}=0
\]
with $N_{j-1}/N_j\in \cW_{t_j}$. We claim that $t_1=s_1$. Otherwise, we have an inequality, say $t_1>s_1$. By Lemma \ref{lem: no forward Hom} this would imply that there are no strict morphisms $M\to N/N_1\in\cW_{t_1}$. In other words, $M\subset N_1$ a contradiction since $M=N$.

So, $t_1=s_1$. Then, the same argument shows that $N_1\subseteq M_1$ and $M_1\subseteq N_1$. So, $N_1=M_1$. By induction on $n,m$ we get $N_k=M_k$ for all $k$ and $n=m$. {So, these two filtrations of $M$ are the same and the filtration is unique.}
\end{proof}
}

{
\begin{thm}\label{thm: HN for P}
    Given a smooth green path $\theta_{t}$ and $t_0\in\RR$, an object $M\in\cG$ lies in $\cP(\theta_{t_0})$ if and only if the strict HN-filtration of $M$ has strict subquotients in $\cW(\theta_t)$ for $t<t_0$.
\end{thm}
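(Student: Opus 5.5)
The plan is to prove the two implications separately. Throughout, I use that the first index $t_1$ of the strict HN-filtration of $M$ is characterized, as in the proof of Theorem \ref{thm: HN stratification}, as the largest $t$ for which $\theta_t(X)=0$ for some nonzero strict quotient $X$ of $M$. The other indices satisfy $t_m<\cdots<t_2<t_1$. So the statement amounts to: $M\in\cP(\theta_{t_0})$ if and only if $t_1<t_0$. (For $M=0$ both sides hold trivially.) By Proposition \ref{prop: uniqueness of HN}, it does not matter which HN-filtration we look at.

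($\Leftarrow$) Suppose $t_k<t_0$ for all $k$.
\begin{enumerate}
\item Each subquotient $X_k=M_{k-1}/M_k$ lies in $\cW(\theta_{t_k})$, so $\theta_{t_k}\in D_\cG(X_k)$.
\item By Lemma \ref{lem: green is like linear}, $\theta_t(X')>0$ for every $t>t_k$ and every nonzero strict quotient $X'$ of $X_k$. Taking $t=t_0$ gives $X_k\in\cP(\theta_{t_0})$.
\item The $M_k$ are strict subobjects of $M$ with $M_k\subseteq M_{k-1}$. By Lemma \ref{lem: parallel inclusions}, $M_k\cof M_{k-1}$ is a cofibration, so $M_k\cof M_{k-1}\onto X_k$ is a strict extension.
\item Starting from $M_m=0$ and going up, induction together with closure of $\cP(\theta_{t_0})$ under strict extensions (Theorem \ref{thm: P(theta) is P}) gives $M=M_0\in\cP(\theta_{t_0})$.
\end{enumerate}

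($\Rightarrow$) Suppose $M\in\cP(\theta_{t_0})$, so $\theta_{t_0}(X)>0$ for every nonzero strict quotient $X$ of $M$. I claim $\theta_t(X)>0$ for all $t\ge t_0$ and all such $X$.
\begin{enumerate}
\item Only finitely many dimension vectors occur among strict quotients of $M$. Hence the set of $t\ge t_0$ at which some $\theta_t(X)\le0$ is closed, and if it is nonempty it has a minimum $s$.
\item We have $s>t_0$, because at $t_0$ all these values are strictly positive.
\item At $s$ we have $\theta_s(X)=0$ for some $X$, and $\theta_s(X'')\ge0$ for all nonzero strict quotients $X''$ of $M$. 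This includes all strict quotients of $X$, since fibrations compose (Theorem \ref{thm: composition of fibrations}).
\item Therefore $\theta_s\in D_\cG(X)$. Condition (1) of Definition \ref{def: smooth Bridgeland stability} then says $\theta_t(X)$ is strictly increasing at $t=s$. So $\theta_{s-\varepsilon}(X)<0$ for small $\varepsilon>0$, which contradicts the minimality of $s$.
\end{enumerate}
Hence no $t\ge t_0$ has $\theta_t(X)=0$ for a nonzero strict quotient $X$ of $M$. So $t_1<t_0$, and then every $t_k<t_0$.

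I expect the main obstacle to be the ($\Rightarrow$) step: making the ``first time of failure'' argument rigorous, and checking that the quotient realizing the minimum really lies in $\cW(\theta_s)$. The finiteness of dimension vectors handles the first point. The transitivity of strict quotients handles the second, exactly as in the proof of Lemma \ref{lem: green is like linear}.
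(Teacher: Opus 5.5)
Your proof is correct. For ($\Rightarrow$) you follow the paper's line and reduce to showing that the index $t_1$ of the top HN layer satisfies $t_1<t_0$. The paper simply asserts that $\theta_{t_0}(X)>0$ for all nonzero strict quotients forces $t_1<t_0$. You actually prove that this positivity persists for all $t\ge t_0$, using a minimal-failure-time argument. That argument is the proof of Lemma~\ref{lem: green is like linear} rerun with $D_\cG(X)$ replaced by the open set $\cU(M)$, and it fills a step the paper leaves implicit. Here you do not need the characterization of $t_1$ as the maximal vanishing time, nor Proposition~\ref{prop: uniqueness of HN}: the top layer $M/M_1$ is itself a nonzero strict quotient of $M$ that vanishes at $t_1$.

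For ($\Leftarrow$) your route genuinely differs from the paper's. The paper argues by contrapositive. If $M\notin\cP(\theta_{t_0})$, some nonzero strict quotient has $\theta_{t_0}\le 0$, and the intermediate value theorem gives a vanishing time $\ge t_0$. Hence the maximal vanishing time, which is the index of the top HN layer in the construction of Theorem~\ref{thm: HN stratification}, is $\ge t_0$. You argue directly instead. Each layer $X_k\in\cW(\theta_{t_k})$ with $t_k<t_0$ lies in $\cP(\theta_{t_0})$ by Lemma~\ref{lem: green is like linear}. Since $M$ is built from its layers by strict extensions, $M\in\cP(\theta_{t_0})$ by Theorem~\ref{thm: P(theta) is P}.

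The paper's version is shorter but relies on how the filtration was constructed. Yours is independent of that construction. It also shows more generally that $\cP(\theta_{t_0})$ contains every object with a strict filtration whose layers lie in slices $\cW(\theta_t)$ with $t<t_0$.
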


\begin{proof}
    This follows from the proof of Theorem \ref{thm: HN stratification}. Given $M\in \cP(\theta_{t_0})$, we have that $\theta_{t_0}(X)>0$ for any nonzero strict quotient $X$ of $M$. So, $\theta_{t_1}(X)=0$ implies that $t_1<t_0$. The other $t_i$ are $<t_1$. So, all strict subquotients of $M$ lie in some $\cW(\theta_{t_i})$ where $t_i\le t_1<t_0$.

    Conversely, suppose $M\notin\cP(\theta_{t_0})$. Then $\theta_{t_0}(X)\le0$ for some nonzero strict quotient $X_1$ of $M$. So, there is some $t_1\ge t_0$ so that $\theta_{t_1}(X_1)=0$ for some strict quotient $X_1$ of $M$. As in the proof of Theorem \ref{thm: HN stratification}, we take $X_1$ maximal and obtain $X_1\in \cW(\theta_{t_1})$. This proves the theorem.
\end{proof}

To obtain the analogous theorem for $\cQ(\theta_{t_0})$ we need to build the strict HN-filtration of any $M\in\cG$ from the bottom up.

\begin{lem}\label{lem: bottom of M}
    For any $M\in\cG$, let $t_1$ be minimal so that $M$ has a strict subobject $Y$ with $\theta_{t_1}(Y)=0$. The maximal such $Y$ is the bottom layer of the unique strict HN-filtration of $M$, i.e., $M/Y$ has HN-filtration with strict subsquotients in $\cW(\theta_t)$ for $t>t_1$.
\end{lem}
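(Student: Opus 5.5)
The plan is to dualize the proof of Theorem \ref{thm: HN stratification}, building the filtration from the bottom with strict subobjects in place of strict quotients, and then to invoke uniqueness (Proposition \ref{prop: uniqueness of HN}).

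\textbf{Step 1: choice of $t_1$ and the sign condition.} As in the proof of Theorem \ref{thm: HN stratification}, only finitely many dimension vectors of strict subobjects of $M$ occur. Hence the set of $t$ with $\theta_t(Y)=0$ for some nonzero strict subobject $Y$ is finite. It is nonempty, since $\theta_t(Y)<0$ for $t\ll0$ and $\theta_t(Y)>0$ for $t\gg0$, so the minimum $t_1$ exists.

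By minimality, $\theta_{t_1}(Y')\le0$ for every strict subobject $Y'$ of $M$. Indeed, if $\theta_{t_1}(Y')>0$, then $\theta_t(Y')=0$ for some $t<t_1$, since $\theta_t(Y')<0$ for $t\ll0$.

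\textbf{Step 2: the bottom layer.} Let $Y$ be a maximal strict subobject with $\theta_{t_1}(Y)=0$. Every strict subobject of $Y$ is a strict subobject of $M$ by Theorem \ref{thm: composition of cofibrations}, so it has $\theta_{t_1}\le0$. Therefore $Y\in\cW(\theta_{t_1})$.

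\textbf{Step 3: the quotient $M/Y$ lies in $\cQ(\theta_{t_1})$.} Let $Z/Y$ be a nonzero strict subobject of $M/Y$. By Remark \ref{rem: inherited subquotients} and Theorem \ref{def/thm: subquotient}(b)$\Rightarrow$(c), $Z$ is a strict subobject of $M$ properly containing $Y$. Step 1 gives $\theta_{t_1}(Z)\le0$, and maximality of $Y$ gives $\theta_{t_1}(Z)\ne0$. Hence
\[
\theta_{t_1}(Z/Y)=\theta_{t_1}(Z)<0,
\]
so $M/Y\in\cQ(\theta_{t_1})$.

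\textbf{Step 4: the main obstacle, showing all HN layers of $M/Y$ lie at times $t>t_1$.} Apply Theorem \ref{thm: HN stratification} to $M/Y$ and take the bottom layer $W$ of its HN-filtration. This $W$ is a nonzero strict subobject of $M/Y$ lying in $\cW(\theta_s)$ for the smallest parameter $s$. I need $s>t_1$. I would argue by contradiction, using the dual of Lemma \ref{lem: green is like linear}: for $X\in\cW(\theta_{s})$, every nonzero strict subobject $X'$ of $X$ satisfies $\theta_t(X')<0$ for all $t<s$. Its proof mirrors the lemma, running time backwards and using condition (1) of Definition \ref{def: smooth Bridgeland stability} on subobject walls.

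If $s\le t_1$, then $s<t_1$, because $\theta_{t_1}(W)<0$ by Step 3 while $\theta_s(W)=0$. Then $\theta_t(W)>0$ for $t>s$ by Lemma \ref{lem: green is like linear}, so $\theta_{t_1}(W)>0$, contradicting Step 3. Hence $s>t_1$, and so all HN layers of $M/Y$ lie at times $t>t_1$.

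\textbf{Step 5: concatenation and uniqueness.} Concatenate the HN-filtration of $M/Y$, pulled back to $M$, with $Y$ placed at the bottom. This is a strict filtration by Theorem \ref{def/thm: subquotient} and Theorem \ref{thm: composition of cofibrations}, and its layers have decreasing parameters, with the bottom layer $Y$ at $t_1$. By Proposition \ref{prop: uniqueness of HN} it is the strict HN-filtration of $M$, so $Y$ is its bottom layer.
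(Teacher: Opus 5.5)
Your proof is correct and follows the paper's argument; the paper only says that minimality of $t_1$ gives $Y\in\cW(\theta_{t_1})$ and that maximality of $Y$ forces the remaining layers to lie at $t>t_1$, and your Steps 1--4 fill in those details (through $M/Y\in\cQ(\theta_{t_1})$ and Lemma \ref{lem: green is like linear}, which avoids the circularity of citing Theorem \ref{thm: HN for Q}), while Step 5 makes the identification with the unique HN-filtration explicit. Two small remarks: the dual lemma you announce in Step 4 is never used, and in Step 1 you need only finitely many dimension vectors (each set $\{t:\theta_t(Y)=0\}$ is closed, nonempty and bounded below, so the minimum exists), not finitely many zeros, which can fail for a smooth green path.
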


\begin{proof}
    Minimality of $t_1$ implies that $Y\in\cW(\theta_{t_1})$. Maximality of $Y$ implies that the other subquotients of $M$ must lie in $\cW(\theta_t)$ for $t>t_1$.
\end{proof}

\begin{thm}\label{thm: HN for Q}
       Let $\theta_{t_0}$ be a point on a smooth green path. Then an object $M\in\cG$ lies in $\cQ(\theta_{t_0})$ if and only if the strict HN-filtration of $M$ has strict subquotients in $\cW(\theta_t)$ for $t>t_0$.
\end{thm}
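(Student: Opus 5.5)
The proof will be the dual of the proof of Theorem \ref{thm: HN for P}, with the HN-filtration built from the bottom up using Lemma \ref{lem: bottom of M} in place of the top-down construction in Theorem \ref{thm: HN stratification}. The one extra ingredient is a dual form of Lemma \ref{lem: green is like linear}: if $\theta_{t_1}\in D_\cG(Y)$ then $\theta_t(Y')<0$ for all $t<t_1$ and all nonzero strict subobjects $Y'$ of $Y$, including $Y'=Y$. This follows from Definition \ref{def: smooth Bridgeland stability} (1) by running the same argument backwards in time.

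\emph{Forward direction.} Suppose $M\in\cQ(\theta_{t_0})$ is nonzero. Then $\theta_{t_0}(Y)<0$ for every nonzero strict subobject $Y$ of $M$.
\begin{itemize}
\item As in the proof of Theorem \ref{thm: HN stratification}, only finitely many $t$ satisfy $\theta_t(Y)=0$ for such $Y$, since this depends only on dimension vectors. Conditions (2) and (3) of Definition \ref{def: smooth Bridgeland stability} guarantee that some such $t$ exists.
\item Let $t_1$ be the minimal such $t$. Since $\theta_t(Y)$ stays negative on $(-\infty,t_0]$ for every nonzero strict subobject $Y$, we get $t_1>t_0$.
\item By Lemma \ref{lem: bottom of M}, the bottom layer $Y_1$ of the strict HN-filtration lies in $\cW(\theta_{t_1})$, and the remaining subquotients, which form the HN-filtration of $M/Y_1$, lie in slices $\cW(\theta_t)$ with $t>t_1>t_0$.
\item By uniqueness of HN-filtrations (Proposition \ref{prop: uniqueness of HN}), this is the strict HN-filtration of $M$. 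So all its subquotients lie in $\cW(\theta_t)$ for $t>t_0$.
\end{itemize}

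\emph{Converse.} Suppose the HN-filtration of $M$ has all subquotients in $\cW(\theta_t)$ with $t>t_0$, but $M\notin\cQ(\theta_{t_0})$. Then some nonzero strict subobject $Y$ has $\theta_{t_0}(Y)\ge 0$. Since $\theta_t(Y)<0$ for $t\ll0$, the set of times where some nonzero strict subobject has $\theta_t=0$ meets $(-\infty,t_0]$. Let $t_1\le t_0$ be its minimum. Lemma \ref{lem: bottom of M} then says the bottom HN layer of $M$ is a nonzero object of $\cW(\theta_{t_1})$ with $t_1\le t_0$. This contradicts the hypothesis, by uniqueness (Proposition \ref{prop: uniqueness of HN}) again.

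\emph{Main obstacle.} The delicate point is that Lemma \ref{lem: bottom of M} really identifies the bottom layer of the unique HN-filtration, that is, that the top-down construction of Theorem \ref{thm: HN stratification} and the bottom-up one agree. I would check this directly:
\begin{itemize}
\item With $Y$ maximal at the minimal time $t_1$, the quotient $M/Y$ has no nonzero strict subobject vanishing at any $t\le t_1$. Otherwise its preimage in $M$ would be a strict subobject (Lemma \ref{lemma A}, Theorem \ref{thm: composition of cofibrations}) that contradicts either the minimality of $t_1$ or the maximality of $Y$.
\item Iterating this gives a filtration whose indices are strictly ordered in the way Definition \ref{def: HN-stratification} requires.
\item Uniqueness then identifies it with the HN-filtration.
\end{itemize}
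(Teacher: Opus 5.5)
Your proposal is correct and follows the paper's proof. Both directions rest on Lemma \ref{lem: bottom of M}: the forward direction shows that the minimal vanishing time $t_1$ exceeds $t_0$, and your converse is the contrapositive of the paper's minimality argument.

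One tool should be adjusted. The fact $t_1>t_0$ comes most directly from the paper's own Lemma \ref{lem: green is like linear}, applied to the nonzero bottom piece $Y\in\cW(\theta_{t_1})$: if $t_1\le t_0$, it would give $\theta_{t_0}(Y)\ge0$. The same lemma is what your check of Lemma \ref{lem: bottom of M} needs in the case $s<t_1$. There, applied to $Z/Y$ chosen at the minimal vanishing time $s$, it gives $\theta_{t_1}(Z)>0$ and hence the contradiction with the minimality of $t_1$; additivity at time $s$ alone only yields $\theta_s(Z)=\theta_s(Y)<0$.
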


{
\begin{proof}
    Let $M\in \cQ(\theta_{t_0})$. Then $\theta_{t_0}(Y)<0$ for all nonzero strict subobjects $Y$ of $M$. So, $\theta_{t_1}(Y)=0$ implies $t_1>t_0$. By Lemma \ref{lem: bottom of M}, the bottom of the strict HN-filtration of $M$ lies in $\cW(\theta_{t_1})$ for $t_1>t_0$. The other strict subquotient lie in higher slices $\cW(\theta_t)$ for $t>t_1$. Conversely, suppose that the bottom subquotient $Y$ of $M$ lies in $\cW(\theta_{t_1})$ where $t_1>t_0$. By minimality of $t_1$, this implies that $\theta_{t_0}(Y)<0$ for all nonzero strict subquotients $Y$ of $M$. So, $M\in\cQ(\theta_{t_0})$. 
\end{proof}
}

\begin{cor}
    Let $\theta\in V_\Lambda$ not lie on any pseudo-wall. Then every object $M$ of $\cG$ has a unique strict filtration $tM\cof M\onto fM$ where $tM\in \cP(\theta)$ and $fM\in \cQ(\theta)$.
\end{cor}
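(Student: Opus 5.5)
The plan is to derive this from the general theory rather than from the green path machinery. Since $\theta$ lies on no pseudo-wall, $\cW(\theta)=0$. Theorem \ref{thm: duality between P(theta) and Q(theta)} then says that $\cQ(\theta)=\cP(\theta)^\perp$. By Theorem \ref{thm: P(theta) is P}, $\cP(\theta)$ is a pseudo-torsion class, so $(\cP(\theta),\cQ(\theta))$ is a pseudo-torsion pair in the sense following Theorem \ref{thm: duality between P,Q}. Theorem \ref{thm: tM and fM} then gives, for each $M\in\cG$, a unique strict extension $tM\cof M\onto fM$ with $tM\in\cP(\theta)$ and $fM\in\cQ(\theta)$. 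That is exactly the claim.

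Since the corollary sits after the HN results, I would also give a second argument that uses them. Choose a linear green path through $\theta$, namely $\theta_t=\theta+t\eta$ with $\eta=(1,\dots,1)$. A linear green path satisfies Definition \ref{def: smooth Bridgeland stability}. Condition (1) holds because $\frac{d}{dt}\theta_t(M)=\eta(M)>0$ for every nonzero $M$. Conditions (2) and (3) hold because $\eta(M)\ge 1$ on nonzero dimension vectors, so $\theta_t(M)=\theta(M)+t\eta(M)$ is eventually positive for large $t$ and negative for very negative $t$, uniformly over $M$, since $\theta$ is bounded on the positive cone relative to $\eta$.

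Now take the unique strict HN-filtration of $M$ from Theorem \ref{thm: HN stratification} and Proposition \ref{prop: uniqueness of HN}. Its slices lie in $\cW(\theta_{t_k})$, and no $t_k$ equals $0$, because $\cW(\theta_0)=\cW(\theta)=0$. Split the filtration at the last index with $t_k>0$. Let $tM$ be the strict subobject whose layers have $t_k<0$, and let $fM=M/tM$, whose layers have $t_k>0$. By Theorem \ref{thm: HN for P}, $tM\in\cP(\theta)$; this uses that the HN-filtration of $tM$ is the induced tail, which follows from uniqueness. By Theorem \ref{thm: HN for Q}, $fM\in\cQ(\theta)$. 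The map $tM\cof M$ is a cofibration because it is a term of a strict filtration, using Definition \ref{def: strict filtration}.

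For uniqueness in this second route I would still fall back on the argument of Theorem \ref{thm: tM and fM}. If $t'M\cof M\onto f'M$ is another such sequence, the composite $t'M\to M\to fM$ is strict and must vanish by Theorem \ref{thm: duality between P(theta) and Q(theta)}, so $t'M\subseteq tM$. The symmetric argument gives the reverse inclusion. The only step needing care is checking that $\cW(\theta)=0$ really gives $\cQ(\theta)=\cP(\theta)^\perp$ and not merely $\overline\cQ(\theta)$; this is Lemma \ref{lem: Pbar=P when W=0}. I therefore expect the first route to suffice, and I would present it as the main proof.
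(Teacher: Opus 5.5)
Your main argument is exactly the paper's proof. The paper simply notes that the corollary is Theorem \ref{thm: tM and fM} applied to the pseudo-torsion pair $(\cP(\theta),\cQ(\theta))$, which is a pair because $\cW(\theta)=0$ forces $\cQ(\theta)=\cP(\theta)^\perp$, and it also remarks that the statement follows from the HN results, which is your second route; the proposal is correct.
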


\begin{proof} This is Theorem \ref{thm: tM and fM}. We are just observing that it also follows from Theorem \ref{thm: HN for Q} above.
\end{proof}
}

{
We use the strict HN-filtration of any object of $\cG$ to obtain a maximality result for Theorem \ref{thm: FHO}. Recall that, for any smooth green path $\theta_t$, the collection $\cB$ of all pseudo-bricks in all $\cW_0(\theta_t)$ with indices $\pi(B)=t$ for $B\in \cW_0(\theta_t)$ forms a strict FHO sequence.

\begin{thm}\label{thm: maximality of FHO}
    This FHO collection of pseudo-bricks is maximal in the sense that no new brick $B_0$ can be added to the set with index $\pi(B_0)=t_0$ for some $t_0\in\RR$ without destroying the FHO property.
\end{thm}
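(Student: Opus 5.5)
Suppose a pseudo-brick $B_0$, not isomorphic to any member of $\cB$, is added with index $\pi(B_0)=t_0$ and the enlarged collection is still a strict FHO sequence. We will produce a nonzero strict morphism that breaks the FHO condition. The tool is the strict HN-filtration of $B_0$ from Theorem \ref{thm: HN stratification}, refined as in its second part so that all strict subquotients lie in the sets $\cW_0(\theta_t)$. That is, $B_0$ has a strict filtration whose top layer $X_1=B_0/M_1$ lies in $\cW(\theta_{t_1})$, with $t_1$ the largest index, and whose bottom layer $Y=M_{m-1}$ lies in $\cW(\theta_{t_m})$, with $t_m$ the smallest index.

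By Lemma \ref{lem: filtration of W(theta)}, $X_1$ has a strict filtration with subquotients in $\cW_0(\theta_{t_1})$. Its top piece $B_1$ is a strict quotient of $X_1$ lying in $\cW_0(\theta_{t_1})$, so $B_1\in\cB$ with $\pi(B_1)=t_1$. Since composites of fibrations are fibrations (Theorem \ref{thm: composition of fibrations}), the composite $B_0\onto X_1\onto B_1$ is a nonzero strict morphism. Dually, $Y$ has a strict filtration with subquotients in $\cW_0(\theta_{t_m})$, and its bottom piece $B_m$ is a strict subobject of $Y$. Since composites of cofibrations are cofibrations (Theorem \ref{thm: composition of cofibrations}), $B_m\cof Y\cof B_0$ is a nonzero strict morphism with $B_m\in\cB$ and $\pi(B_m)=t_m$.

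The FHO property then gives two constraints. The morphism $B_0\to B_1$ is nonzero and strict, so either it is an isomorphism or $t_0>t_1$. The morphism $B_m\to B_0$ is nonzero and strict, so either it is an isomorphism or $t_m>t_0$. But $t_m\le t_1$. So unless one of the two maps is an isomorphism, we would need $t_1<t_0<t_m\le t_1$, which is impossible. If one of them is an isomorphism, then $B_0\cong B_1$ or $B_0\cong B_m$, so $B_0$ already belongs to $\cB$ and nothing new has been added. Hence the collection is maximal.

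The delicate points are checking that the top piece of $X_1$ is a strict quotient of $B_0$ and the bottom piece of $Y$ is a strict subobject of $B_0$; these follow from Theorem \ref{def/thm: subquotient} together with the composition theorems. One also has to handle the case $m=1$, where $B_0\in\cW(\theta_{t_1})$. The same argument covers it: $B_0$ maps onto some $B_1\in\cW_0(\theta_{t_1})$ and receives a map from some $B_m$ with the same index $t_1$. FHO then forces $t_0<t_1$ and $t_0>t_1$ unless one of the maps is an isomorphism. This case is also consistent with Theorem \ref{thm: FHO}, since a pseudo-brick in $\cW(\theta_{t_1})$ with no proper strict subobject in $\cW(\theta_{t_1})$ is already in $\cW_0(\theta_{t_1})$.
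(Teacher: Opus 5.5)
Your proof is correct and follows the same route as the paper. Like the paper, you take the top piece $B_1$ (a strict quotient) and the bottom piece $B_m$ (a strict subobject) of the refined strict HN-filtration of $B_0$, and play the two FHO constraints against $t_m\le t_1$. The one difference is in the last step: you use the strict inequalities of the FHO condition to force one of the two maps to be an isomorphism directly. The paper instead weakens to $t_1\le t_0\le t_n$, deduces $t_0=t_1=t_n$, and then appeals to the pseudo-brick property to collapse the filtration to one layer, so your version is slightly cleaner.
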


\begin{proof}
    Suppose we attempt to add a pseudo-brick $B_0$ with $\pi(B_0)=t_0$. Then we will show that $B_0$ is isomorphic to an element of $\cB$ with index $t_0$.
    
    By Theorem \ref{thm: HN stratification}, there is a strict filtration of $B_0$ with strict quotient $B_1$ with index $t_1$ and strict subobject $B_n$ with index $t_n$ where $t_n\le t_1$. The strict morphism $B_0\to B_1$ indicates that $t_1\le t_0$ and the morphism $B_n\to B_0$ indicates that $t_0\le t_n$. Then $t_0=t_1=t_n$ and all strict subquotients of $B_0$ are in $\cW_0(\theta_{t_0})$. Since $B_0$ is a pseudo-brick, there is only one layer and $B_0=B_1\in\cW_0(\theta_{t_0})$ as claimed.
\end{proof}
 
The same proof shows the following.
\begin{thm}\label{thm: minimality of HN}
    The collection of bricks $\cB$ described above is minimal for the HN-stratification of $\cG$ in the sense that, if any element $B_0\in\cB$ were to be deleted, the rest of $\cB$ would not suffice to give a strict HN-filtration of $B_0$.\qed
\end{thm}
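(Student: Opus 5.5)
The plan is to take $B_0\in\cB$, so $B_0\in\cW_0(\theta_{t_0})$ with $\pi(B_0)=t_0$. Suppose, for contradiction, that $B_0$ has a strict HN-filtration whose strict subquotients are strict filtered by the pseudo-bricks in $\cB\setminus\{B_0\}$. More precisely, suppose there is a strict filtration of $B_0$ whose subquotients are isomorphic to elements of $\cB$ different from $B_0$, arranged in order of non-increasing index. I will show this forces one of those subquotients to be isomorphic to $B_0$.

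First, I bound the indices. Let $B_1$ be the top subquotient (a strict quotient of $B_0$) with index $t_1$. Let $B_n$ be the bottom subquotient (a strict subobject of $B_0$) with index $t_n$, so $t_n\le t_1$. The fibration $B_0\onto B_1$ is a nonzero strict morphism. By Theorem \ref{thm: FHO} (no nonzero strict maps from $\cW(\theta_{s})$ to $\cW(\theta_{s'})$ for $s<s'$), this forces $t_1\le t_0$. The cofibration $B_n\cof B_0$ is likewise a nonzero strict morphism, so $t_0\le t_n$. Hence $t_0=t_1=t_n$, and every subquotient has index $t_0$, i.e. lies in $\cW_0(\theta_{t_0})$.

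Next, I compare with $B_0$ inside $\cW_0(\theta_{t_0})$. The map $B_0\onto B_1$ is a nonzero strict morphism between two objects of $\cW_0(\theta_{t_0})$. By the second half of Theorem \ref{thm: FHO}, such a morphism exists only between isomorphic objects. Concretely, its kernel lies in $\cW(\theta_{t_0})$ by Theorem \ref{thm: W(theta) is W}, and it is a proper strict subobject of $B_0$ unless it is zero. Minimality in the definition of $\cW_0$ then forces the kernel to be $0$. So $B_1\cong B_0$, contradicting the assumption that only elements of $\cB$ other than (the isomorphism class of) $B_0$ were used. Therefore $\cB\setminus\{B_0\}$ cannot give a strict HN-filtration of $B_0$.

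The only delicate point is interpreting ``deleted'' up to isomorphism, and making sure the HN-filtration is built from slices generated by the remaining bricks. The order-comparison step is immediate from Theorem \ref{thm: FHO}, exactly as in the proof of Theorem \ref{thm: maximality of FHO}. I expect the main care to be needed in noting that a one-layer filtration whose top is $B_0$ itself reintroduces $B_0$.
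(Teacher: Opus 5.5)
Your argument is correct and is essentially the paper's. The paper proves this by ``the same proof'' as Theorem \ref{thm: maximality of FHO}: it uses the strict morphisms $B_0\onto B_1$ and $B_n\cof B_0$ with Theorem \ref{thm: FHO} to force $t_0=t_1=t_n$, and then concludes $B_1\cong B_0$. For that last step you invoke the second half of Theorem \ref{thm: FHO}, or equivalently the minimality condition defining $\cW_0(\theta_{t_0})$ applied to $\ker(B_0\onto B_1)$; this is a cleaner justification than the paper's brief appeal to $B_0$ being a pseudo-brick.
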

}


\section{Resolution of thick walls}\label{new sec4}

This is a preview of our next paper \cite{IM2} in which we discuss maximal green sequences given by green paths and resolve the problem that arises when the green path goes through thick walls. 

The poset of pseudo-torsion classes in a torsion class $\cG$ forms a lattice since any intersection of pseudo-torsion classes is a pseudo-torsion class and any collection of pseudo-torsion classes generates a pseudo-torsion class. A \textbf{maximal green sequence} in $\cG$ is defined to be a maximal linearly order tower in this lattice. Given a green path $\theta_t$ which only goes through thin walls $D_\cG(B_\beta)$ at times $t_\beta$, we obtain a FHO-sequence $\{B_\beta\}$ with $\pi(B_\beta)=t_\beta$. This is a possibly countably infinite collection of pseudo-bricks. This gives a possibly continuum number of pseudo-torsion classes $\cP(\theta_\delta)$ where $\delta\in\RR$ ranges though the Dedekind cuts in the countable family of real numbers $t_\beta$. This includes both the components of the complement of the set of $t_\beta$ and the infinitesmal green and red side of each pseudo-wall, namely $\cP(\theta_{t_\beta})$ and $\overline\cP(\theta_{t_\beta})$.

When a green path goes through a thick wall, such as the null domain for a tame hereditary algebra, it cannot distinguish between the objects in $\cW(\theta)$ since they all have the same (or proportional) dimension vectors. To separate these objects we need a new stability space which may have uncountable dimension. We will assume the cardinality of the underlying field $K$ is at most that of $\RR$. Then the set of isomorphism classes of finitely generated $\Lambda$-modules will have cardinality at most the continuum. Then we can find smooth green paths which pass through a possibly continuum number of walls at different times.

\subsection{Stability space of a wide subcategory}\label{ss41: VX}

We take the pseudo-wide subcategory $\cX=\cW(\theta)$ for $\theta$ on a thick wall with strict morphisms. The first step is to discuss $K_0\cX$.

\begin{defn}\label{def: K0X}
    For any pseudo-wide subcategory $\cX$, we define $\overline K_0\cX$, the \textbf{reduced K-theory of $\cX$} to be the additive group with generators the formal expression $[X]$ where $X\in \cX$ modulo the relation that $[A]-[B]+[C]=0$ for all strict exact sequences $A\cof B\onto C$ and the additional relation that $[nX]=n[X]$ where $nX$ is the direct sum of $n$ copies of $X\in\cX$. (This is to take care of pathological examples such as Example \ref{B2 example} where $nI_2$ is strictly indecomposable for all $n\ge1$.) The \textbf{reduced stability space} of $\cX$ is the vector space
\[
    V_\cX:=\Hom(\overline K_0\cX,\RR).
\]
\end{defn}

Note that since the strict category $\cG$ is a subcategory of $mod\text-\Lambda$ we have a homomorphism $K_0\cG\to K_0\Lambda$ factors through $\overline K_0\cG$ since the relation $[nX]=n[X]$ holds in $K_0\Lambda$.

For any $\widetilde\theta\in V_\cX$ and any object $X\in\cX$ we have
\[
    \widetilde\theta(X):=\widetilde\theta([X])\in\RR
\]
where $[X]$ is the element of $K_0(\cX)$ determined by $X$. By definition of $K_0\cX$, these functions satisfy the condition that $\widetilde\theta(B)=\widetilde\theta(A)+\widetilde\theta(C)$ for any strict extension $B$ of two objects $A,C\in\cX$.

Although the space $V_\cX$ may be infinite dimensional, we can still talk about smooth paths in $V_\cX$. Namely, a path $\widetilde\theta_t$, $t\in \RR$ or $t\in[0,1]$ is a path in $V_\cX$ so that $\widetilde\theta_t(X)$ is a continuous function of $t$ for all $X\in\cX$. The path is smooth if $\widetilde\theta_t(X)$ is a smooth function of $t$ for all $X\in\cX$.

We can construct relative pseudo-torsion classes $\cP(\widetilde\theta)$, relative pseudo-torsionfree classes $\cQ(\widetilde\theta)$ and relative pseudo-wide subcategories which, fortunately, are pseudo-wide subcategories, allowing us to go to the next level. These are defined as expected in the following way.

{%

\begin{defn}\label{def: P,Q,W for X}
For any $\widetilde\theta\in V_\cX$ we define $\cP(\widetilde\theta)$, $\overline\cP(\theta)$, $\cQ(\theta)$, $\overline\cQ(\theta)$, $\cW(\widetilde\theta)$ in a way analogous to Definition \ref{def: P,Q,W}. For example $\cP(\widetilde\theta)$ is the class consisting of $0$ and all nonzero objects $M$ in $\cX$ so that $\widetilde\theta(M)>0$ and $\widetilde\theta(M'')>0$ for all nonzero strict quotients $M''$ of $M$ in $\cX$. And $\cW(\widetilde\theta)$ is the category of all objects $X\in\cW$ so that $\widetilde\theta(X)=0$ and $\widetilde\theta(X'')\ge0$ for all strict quotients $X''$ of $X$ in $\cX$. Note that $0$ is an object in each of these classes. Also, $\cP(\widetilde\theta)\subseteq \cX$ by definition.
\end{defn}
}

{
We have the following theorem analogous to Theorem \ref{thm: P(theta) is P} with similar proof.
\begin{thm}\label{thm: P(tilde-theta) is P}
    For every $\widetilde\theta\in V_\cX$, $\cP(\widetilde\theta)$ is a relative pseudo-torsion class in $\cX$ inside $\cG$.
\end{thm}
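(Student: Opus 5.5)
We follow the proof of Theorem \ref{thm: P(theta) is P}, replacing $\theta$ by $\widetilde\theta$ and working only with strict extensions inside $\cX$, since $\widetilde\theta$ is additive only on strict exact sequences of objects of $\cX$. There are two properties to check from Definition \ref{def: relative ptor}: closure under strict quotients that lie in $\cX$, and closure under strict extensions.

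\emph{Strict quotients.} Let $M\in\cP(\widetilde\theta)$ and let $M\onto M''$ be a fibration with $M''\in\cX$. Then $\widetilde\theta(M'')>0$ by definition. Any nonzero strict quotient $N$ of $M''$ lying in $\cX$ is also a strict quotient of $M$ by Theorem \ref{thm: composition of fibrations}. Hence $\widetilde\theta(N)>0$, so $M''\in\cP(\widetilde\theta)$.

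\emph{Strict extensions.} Let $A\cof B\onto C$ be a strict exact sequence with $A,C\in\cP(\widetilde\theta)$. Since $\cX$ is pseudo-wide, $B\in\cX$, and additivity of $\widetilde\theta$ on strict sequences in $\cX$ gives $\widetilde\theta(B)=\widetilde\theta(A)+\widetilde\theta(C)>0$. Now take a nonzero strict quotient $B\onto B''$ with $B''\in\cX$. By Corollary \ref{cor: strict quotient of strict sequence}, $B''$ is a strict extension $A''\cof B''\onto C''$ of a strict quotient $A''$ of $A$ by a strict quotient $C''$ of $C$. Here $A''$ is the image of $A$ under the strict composite $A\cof B\onto B''$, and $C''$ is the cokernel of the strict map $A\to B''$.

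\emph{Membership of $A''$ and $C''$ in $\cX$.} This is the main obstacle, since we need $A'',C''\in\cX$ both to apply $\widetilde\theta$ and to use the hypotheses on $A$ and $C$. The resolution uses pseudo-wideness. The composite $A\to B''$ is a strict morphism between objects of $\cX$, so its image $A''$ and cokernel $C''$ lie in $\cX$ by Definition \ref{def: pseudo-wide}(1). Consequently $A''\cof B''\onto C''$ is a strict exact sequence in $\cX$, and
\[
\widetilde\theta(B'')=\widetilde\theta(A'')+\widetilde\theta(C'').
\]
Each term on the right is $\ge 0$: it is zero if the object is zero, and positive otherwise by the hypothesis on $A$ and $C$. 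They cannot both vanish because $B''\neq0$. Hence $\widetilde\theta(B'')>0$, so $B\in\cP(\widetilde\theta)$.

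Finally, $0\in\cP(\widetilde\theta)$, so the class is nonempty, and $\cP(\widetilde\theta)\subseteq\cX$ by definition. Therefore $\cP(\widetilde\theta)$ is a relative pseudo-torsion class in $\cX$.
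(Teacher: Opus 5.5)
Your proof is correct and is exactly the adaptation of the proof of Theorem \ref{thm: P(theta) is P} that the paper invokes with the phrase ``with similar proof''. You also supply the two points the paper leaves implicit: the induced sequence $A''\cof B''\onto C''$ is strict (Corollary \ref{cor: strict quotient of strict sequence}), so $\widetilde\theta$ is additive on it, and $A'',C''\in\cX$ by pseudo-wideness applied to the strict composite $A\to B''$.
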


}
{
Analogous to Theorems \ref{thm: P(theta) is constant on path components} and \ref{thm: P(theta) are different in different path components} we have the following.

\begin{thm}\label{thm: P(tildetheta) on chambers}
    $\cP(\widetilde\theta)$ is constant on the pseudo-chambers of $V_\cX$ and distinct on disjoint pseudo-chambers.
\end{thm}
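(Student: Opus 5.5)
We will transport the arguments of Theorems \ref{thm: P(theta) is constant on path components} and \ref{thm: P(theta) are different in different path components} from $V_\Lambda$ to $V_\cX$. Throughout, $\cW(\cX)$ denotes the set of $\widetilde\theta\in V_\cX$ with $\cW(\widetilde\theta)\neq0$, and a pseudo-chamber is a path component of its complement, where paths are continuous in the pointwise sense of \S\ref{ss41: VX}. The only new feature is that $V_\cX$ may be infinite dimensional. However, every condition defining $\cP(\widetilde\theta)$ for a fixed object $M$ involves only finitely many numbers $\widetilde\theta(M'')$. Indeed, although $M$ may have infinitely many strict quotients in $\cX$, the values $\widetilde\theta(M'')$ are additive along strict exact sequences and compatible with direct sums. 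We will need that only finitely many classes $[M'']\in\overline K_0\cX$ arise from strict quotients of $M$; this follows by bounding length, since a strict quotient of $M$ has smaller total length than $M$. Granting this, the sets $\cU_\cX(M)=\{\widetilde\theta:M\in\cP(\widetilde\theta)\}$ and $\overline\cU_\cX(M)$ are cut out by finitely many strict, respectively weak, linear inequalities, as in Definition \ref{defn: U(M)}.

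\emph{Constancy.} Following Lemma \ref{lem: dU(M) is union of D(M')}(c), we first show that $\partial\cU_\cX(M)=\overline\cU_\cX(M)-\cU_\cX(M)$ is contained in the union of the relative walls of strict quotients of $M$ in $\cX$. If $\widetilde\theta(\overline M)\ge0$ for all strict quotients and equality holds for one, take a minimal such $\overline M$. Its strict quotients in $\cX$ are strict quotients of $M$ in $\cX$ (Theorem \ref{thm: composition of fibrations}, and $\cX$ is pseudo-wide), so $\overline M\in\cW(\widetilde\theta)$. Now let $\widetilde\theta_t$ be a path avoiding $\cW(\cX)$. The function $t\mapsto(\widetilde\theta_t(M''))_{M''}$ is a continuous path in a finite dimensional space. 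So if $M\in\cP(\widetilde\theta_0)$ but $M\notin\cP(\widetilde\theta_1)$, take the infimum of the times at which some $\widetilde\theta_t(M'')\le0$. At that time we are in $\partial\cU_\cX(M)$, hence on a wall, which is a contradiction.

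\emph{Distinctness.} Take $\widetilde\theta_0,\widetilde\theta_1$ in distinct pseudo-chambers and consider the linear path between them; it is continuous in the pointwise sense. It must meet $\cW(\cX)$. Otherwise, the argument of Remark \ref{rem: path components are convex} applies once the $\cX$-analogue of Proposition \ref{prop: dD in int} holds: a minimal object of $\cW(\widetilde\theta)$ lies in the relative interior of its wall. We then copy the two cases of the proof of Theorem \ref{thm: P(theta) are different in different path components}, the transverse case and the parallel case, replacing \emph{strict quotient/subobject} by \emph{strict quotient/subobject lying in $\cX$}. This replacement is legitimate because $\cX$ is closed under kernels, images and cokernels of strict morphisms; hence the minimal $X'$ chosen in each case is again in $\cX$, and the resulting object lies in $\cP(\widetilde\theta_1)\setminus\cP(\widetilde\theta_0)$ or vice versa. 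Each case uses only finitely many linear functions of $t$, so the "first time a value vanishes" arguments go through verbatim.

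\emph{Main obstacle.} The delicate point is the finiteness claim: that each $M\in\cX$ yields only finitely many values $\widetilde\theta(M'')$. This is needed both for openness of $\cU_\cX(M)$ and for the existence of minimal crossing times. I would first argue it by induction on the length of $M$ as a $\Lambda$-module, using that a strict quotient of $M$ is determined up to its class by a strict filtration, and that $\overline K_0\cX$ classes are additive. If this turns out to fail in general, I would instead restrict the notion of pseudo-chamber to paths that are continuous for each such family separately. That restriction suffices for both halves of the argument, which only ever examine one object $M$ and its strict quotients at a time.
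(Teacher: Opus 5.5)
\textbf{Assessment: same route as the paper, but one essential step is unjustified.}

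Your strategy is the one the paper intends. The paper gives no argument beyond ``analogous to Theorems \ref{thm: P(theta) is constant on path components} and \ref{thm: P(theta) are different in different path components}.'' Your transported arguments work once you know the following finiteness fact: for each $M\in\cX$, only finitely many classes in $\overline K_0\cX$ arise from strict subobjects and strict quotients of $M$ lying in $\cX$.

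\textbf{The gap.} Your justification for that finiteness fact is wrong, and this is exactly where $V_\cX$ differs from $V_\Lambda$. In $V_\Lambda$, finiteness holds because $K_0\Lambda\cong\ZZ^n$ has only finitely many vectors of bounded length. $\overline K_0\cX$ has no such property. Take the Kronecker algebra over an infinite field, $\cG=mod\text-\Lambda$, and $\cX$ the regular modules (the wall for $\theta$ on the null wall). Then $\overline K_0\cX$ is free on the classes $[R_\lambda]$ of the quasi-simples, and infinitely many of these have length $2$. So ``a strict quotient has smaller length than $M$'' bounds nothing. Your fallback of restricting which paths define pseudo-chambers changes the statement being proved.

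A smaller point: the linear path between points of distinct pseudo-chambers meets $\cW(\cX)$ simply by the definition of path components. Invoking Remark \ref{rem: path components are convex} is circular, since that remark is deduced from the theorem.

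\textbf{The missing idea: a Jordan--H\"older comparison inside $\cX$.}
\begin{enumerate}
\item Fix a maximal chain $0=M_0\subset M_1\subset\cdots\subset M_k=M$ of strict subobjects of $M$ lying in $\cX$. Each $M_i/M_{i-1}$ lies in $\cX$. By maximality, Theorem \ref{def/thm: subquotient}, and closure of $\cX$ under strict extensions, $M_i/M_{i-1}$ has no proper nonzero strict subobject in $\cX$.
\item Let $K\cof M$ with $K\in\cX$. Then $K\cap M_i$ is the kernel of the strict composite $K\cof M\onto M/M_i$. So it lies in $\cX$, and it is strict in $K$ by Lemma \ref{lem: parallel inclusions}.
\item $K\cap M_i$ is strict in $M$, hence in $M_i$. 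So $(K\cap M_i)/(K\cap M_{i-1})$ is the image of the strict composite $K\cap M_i\cof M_i\onto M_i/M_{i-1}$. It is therefore a strict subobject of $M_i/M_{i-1}$ lying in $\cX$, hence either $0$ or all of $M_i/M_{i-1}$.
\item Use additivity in $\overline K_0\cX$ along the strict exact sequences $K\cap M_{i-1}\cof K\cap M_i\onto (K\cap M_i)/(K\cap M_{i-1})$. This gives $[K]=\sum_{i\in S}[M_i/M_{i-1}]$ for some $S\subseteq\{1,\dots,k\}$.
\item Every strict quotient of $M$ in $\cX$ has kernel in $\cX$ by the pseudo-wide axioms. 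Its class is therefore $[M]-[K]$ for such a $K$.
\end{enumerate}
So at most $2^k$ classes occur. With this lemma in place, both halves go through as you describe. Constancy uses the first vanishing time of finitely many continuous functions, then a minimal $\overline M$. Distinctness uses your transverse and parallel cases with a minimal-length $X$.
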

}

{
Since the strict category $\cG$ is a pseudo-wide subcategory of itself, the above theory applies to the stability space
\[
    V_\cG=\Hom(\overline K_0\cG,\RR)
\]
Since $\cG\subset mod\text-\Lambda$, we have an additive map $\psi: \overline K_0\cG\to K_0\Lambda$ as discussed above. This map is usually surjective and thus induces an $\RR$-linear map $\psi^\ast: V_\Lambda\to V_\cG$ which is usually a monomorphism.

\begin{eg}
    Let $\Lambda$ be the Hereditary algebra of type $B_2$ given by the modulated quiver $\CC\to \RR$. Thus $S_1=I_1=(\CC\to 0)$ is simple injective, $S_2=(0\to \RR)$ is simple projective, $P_1=(\CC\to \RR^2)$ is projective. Let $\cG=\,^\perp S_2$ be the torsion class with only $P_1,I_1,I_2$ and extensions. This was explained in detail in Example \ref{B2 example} and further explained in Remark \ref{rem: not extension closed}. Here we look at the reduced stability diagram of this strict category $\cG$. We see the chambers holding the 4 torsion classes $\cG_0=0$, $\cG_1=add(I_1)$, $\cG_2=add(I_1\oplus I_2)$ and $\cG_3=\cG$. The pseudo-torsion classes $\cP_4,\cP_5$ as well as $\cG_0,\cG_1,\cG_2,\cG_3$ appeared in the standard stability diagram \ref{B2 example} and the remaining pseudo-torsion classes $\cP_6$ and $\cP_7$ are discussed in Remark \ref{rem: not extension closed}.
    {
    \begin{center}
    \begin{tikzpicture}
\begin{scope}[xshift=4cm,yshift=-15mm]
	\draw(1,2) node[right]{$\cP_4=add(P_1\oplus I_2)$};
	\draw(1,1) node[right]{$\cP_5=add(P_1)$};
	\draw(1,0) node[right]{$\cP_6=add(P_1\oplus I_1)$};
	\draw(1,-1) node[right]{$\cP_7=add(I_2)$};
\end{scope}
\coordinate (G0) at (3,-2);
\coordinate (G1) at (2,0.4);
\coordinate (G2) at (1.3,-1.2);
\coordinate (G3) at (0,-.7);
\coordinate (P4) at (-1.3,-1.2);
\coordinate (P5) at (-2,0.4);
\coordinate (P6) at (0,.9);
\coordinate (P7) at (0,-2.8);

        \draw[thick] (-1,0) circle[radius=2cm];
        \draw[thick] (1,0) circle[radius=2cm];
        \draw[very thick,blue] (0,-1.75) circle[radius=2cm];
        \draw (-3.2,1.3) node{$D(P_1)$};
        \draw (3.2,1.3) node{$D(I_1)$};
       \draw[blue] (2,-3.3) node{$D(I_2)$};
       
       \draw (G0) node{$\cG_0=0$};
       \draw (G3) node{$\cG_3=\cG$};
      \draw (G1) node{$\cG_1$};
      \draw (G2) node{$\cG_2$};
     \draw (P4) node{$\cP_4$};
     \draw (P5) node{$\cP_5$};
     \draw (P6) node{$\cP_6$};
     \draw (P7) node{$\cP_7$};
    \end{tikzpicture}
        \end{center}
        }
\end{eg}

}

{
\begin{defn}\label{def: green path in VX}
We define a \textbf{smooth green path} in $V_\cX$ to be a smooth path $\widetilde\theta_t, t\in\RR$ in $V_\cX$ satisfying the following where $D_\cX(X)$ is the set of all $\widetilde\theta\in V_\cX$ so that $X\in\cW(\widetilde\theta)$.
\begin{enumerate}
    \item whenever $\widetilde\theta_{t_0}$ lies in $D_\cX(X)$ of $X$, then
    \[
        \frac{d}{dt}\widetilde\theta_{t_0}(X)>0
    \]
    \item There is an $U\in\RR$ so that $\widetilde\theta_t(X)>0$ for all $t>U$ and all nonzero $X\in \cX$.
    \item There is an $L\in\RR$ so that $\widetilde\theta_t(X)<0$ for all $t<L$ and all nonzero $X\in \cX$. 
\end{enumerate}
\end{defn}
}

\subsection{Level 2 thick walls}\label{ss42: Level 2}

We need to arrange that the walls with pseudo-simple labels are quasi-thin. To do this, we choose a Hamel basis $\cH$. This is a basis for $\RR$ as vector space over $\QQ$. Hamel showed that the Hamel basis has the cardinality of the continuum \cite{Hamel}. See also \cite{Mackey}. By translation we may choose a Hamel basis in the unit interval $\cH\subset [0,1]$. Since there are at most a continuum number of pseudo-simple objects $S_\alpha$, indexed by a set $\cA=\{\alpha\}$, which give a basis for $\overline K_0\cG$, (not counting the indecomposable objects of the form $nX$ as in Example \ref{B2 example}), we can find a monomorphism $\lambda:\cA\into \cH$. Then, let $\widetilde\theta_t$ be the linear green path in $V_\cG$ given by $\theta_t(S_\alpha)=t-\lambda(\alpha)$. Then we claim that, for each $\alpha\in\cA$, $\cW(\widetilde\theta_{\lambda(\alpha})$ is a quasi-thin wall. The other wall have objects of dimension at least twice the minimum dimension of any $S_\alpha$. So, if we repeat the process the objects will have unbounded size and that will allow us to get, after a lot of effort, iterated refinements of any green path which will give a ``maximal green sequence'', a maximal chain in the lattice of quasi-torsion classes in $\cG$. Details will be in the next paper \cite{IM2}.



\begin{thebibliography}{aa}

\bibitem[BCZ]{BCZ}Barnard, Emily, Andrew Carroll, and Shijie Zhu. "Minimal inclusions of torsion classes." Algebraic Combinatorics 2.5 (2019): 879-901.

\bibitem[BR]{Apostolos-Idun} Beligiannis, Apostolos, and Idun Reiten. \emph{Homological and homotopical aspects of torsion theories}. American Mathematical Soc., 2007.

\bibitem[Br]{Bridgeland} Bridgeland, Tom. \emph{Stability conditions on triangulated categories.} Annals of Mathematics (2007): 317--345.


\bibitem[BST]{BST}Br\"ustle, Thomas, David Smith, and Hipolito Treffinger. \emph{Wall and chamber structure for finite-dimensional algebras.} Advances in Mathematics 354 (2019): 106746.




\bibitem[H]{Hamel} Hamel, Georg. "Eine Basis aller Zahlen und die unstetigen L\"osungen der Funktionalgleichung: $f (x+ y)= f (x)+ f (y)$." Mathematische Annalen 60.3 (1905): 459--462.





\bibitem[I1]{Linearity} K. Igusa, \emph{Linearity of stability conditions}, Communications in Algebra (2020): 1--26.


\bibitem[I2]{GrInvRedrawn} \bysame. \emph{Generalized Grassmann invariant-redrawn}, arXiv:2502.19147.


\bibitem[I3]{MoreGhosts} \bysame. \emph{More ghost modules I}, arXiv:2506.12904.

\bibitem[IM]{IM2} Kiyoshi Igusa and Ray Maresca, \emph{Pseudo-torsion classes II: resolution of thick walls}, in preparation.


\bibitem[IT]{IT14}
 Kiyoshi Igusa and Gordana Todorov, \emph{Picture groups and maximal green sequences},  Electronic Research Archive (2021), 29(5): 3031--3068 , doi: 10.3934/era.2021025.


\bibitem[IT3]{IT26}
\bysame. \emph{Duality of signed exceptional sequences}, in preparation. 
 




 \bibitem[Ki]{King} A.D. King: Moduli of representations of finite-dimensional algebras. Q. J. Math. Oxf. II. Ser. 45, 515--530 (1994).
 


\bibitem[Ma]{Mackey}Mackey, George W. \emph{On infinite dimensional linear spaces.} Proceedings of the National Academy of Sciences 29.7 (1943): 216--221.
 

\bibitem[P]{Peiffer} Ren\'ee Peiffer. \emph{\"Uber Identit\"aten zwischen Relationen}. Mathematische Annalen 121 (1949/1950): 67--99.



\bibitem[S]{Sierpinski} W. Sierpi\'nski, "Un th\'eor\`eme sur les ensembles continus", T\^ohoku Mathematical Journal, Vol. 13, pp. 300--303 (1918).




\end{thebibliography}
\end{document}